\def\i{\sqrt{-1}}
\def\del{\partial}
\def\dbar{\bar\partial}
\def\ddbar{\del\dbar}
\newcommand{\RR}{\mathbb{R}}
\newcommand{\CC}{\mathbb{C}}
\def\del{\partial}
\newtheorem*{theorem*}{Theorem}
\newtheorem{theorem}{Theorem}[section]
\newtheorem*{notation*}{Notation}
\newtheorem{lemma}[theorem]{Lemma}
\newtheorem{corollary}[theorem]{Corollary}
\newtheorem{remark}[theorem]{Remark}
\newtheorem{example}[theorem]{Example}
\newtheorem{proposition}[theorem]{Proposition}
\newtheorem{question}{Question}
\newtheorem{definition}[theorem]{Definition}
\title[Destabilizing curves for dHYM and Z-critical equations]{The set of destabilizing curves for deformed Hermitian Yang-Mills and Z-critical equations on surfaces}
\author{Sohaib Khalid}
\author{Zakarias Sjöström Dyrefelt}
\address{Sohaib Khalid \\ Scuola Internazionale Superiore di Studi Avanzati (SISSA), Via Bonomea, 265, 34136 Trieste TS, Italy.}
\email{mkhalid@sissa.it}
\address{Zakarias Sjöström Dyrefelt \\ Institut for Matematik and Aarhus Institute of Advanced Studies, Aarhus University, Ny Munkegade 118, 8000, Aarhus C, Denmark. }
\email{dyrefelt@aias.au.dk}
\subjclass[2020]{14J60, 32Q26, 53C07, 53C55, 53E30}
\begin{document}

\maketitle

\begin{abstract}

We show that on any compact Kähler surface existence
of solutions to the Z-critical equation can be characterized using a finite number of effective conditions, where the number of conditions is bounded above by the Picard number of the surface.This leads to a first PDE analogue of the locally finite wall-chamber decomposition in Bridgeland stability.

As an application we characterize optimally destabilizing curves for Donaldson’s J-equation and the deformed Hermitian Yang-Mills equation, prove a non-existence result for optimally destabilizing test configurations for
uniform J-stability, and remark on improvements to convergence results for certain geometric
flows. \\
\\

\end{abstract}

\section{Introduction}

\noindent The question of characterizing optimally destabilizing test configurations is central in the study of K-stability and the circle of ideas surrounding the Yau-Tian-Donaldson conjecture in K\"ahler geometry. It is natural to expect that analogous concepts can be of interest also for other stability notions and other geometric PDE.

 In this work we focus on the study of obstructions and existence criteria for the Z-critical equation introduced by Dervan-McCarthy-Sektnan \cite{Dervan, DMS, McCarthy} as a general framework extending and encompassing several geometric PDE of interest in K\"ahler geometry, including the important special case of the deformed Hermitian Yang-Mills (dHYM) equation (see \cite{CollinsXieYau} for an introduction and \cite{CollinsShi} for a survey in the context of algebro-geometric stability and references therein). The latter is moreover closely related to the J-equation introduced by Donaldson \cite{DonaldsonJobservation} which is realized as a `small volume limit' in the dHYM setting, while at the same time being closely related to the constant scalar curvature equation and associated stability notions (e.g. K-stability). Existence criteria for the J-equation have been studied in a large body of work (see, for example, \cite{Chen2000, SongWeinkove, LejmiGabor, CollinsGabor}) culminating in \cite{GaoChen} and \cite{DatarPingali, Song} with a characterization of existence in terms of uniform J-stability, mirroring the Yau-Tian-Donaldson conjecture, and relating stability to analytic conditions tested on subvarieties, as conjectured by Lejmi-Sz\'ekelyhidi \cite{LejmiGabor}. Likewise, a number of similar results are known for the dHYM equation \cite{GaoChen, ChuLeeTakahashi}. A further and key aspect of the Z-critical equation is that it aims to highlight connections to Bridgeland stability \cite{Bridgeland}, which has generated a lot of interest due to its conjecturally far-reaching connections with birational geometry and mirror symmetry. Indeed, their approach closely mirrors Bayer's notion of a polynomial central charge \cite{Bayer}. 
 
In what follows, let $\#$ denote any of the J-equation, deformed Hermitian Yang Mills (dHYM) equation, or Z-critical equation (note that we single out the dHYM equation to emphasize certain aspects specific to this much studied special case).
 
 \begin{question} \label{Question intro}
Verify the existence of a solution to $\#$ via a finite number of effective{/numerical} conditions.
 \end{question}
 
 \noindent 
 This is a natural question related to the existence of \emph{subsolutions}, which is open in general. Such considerations have certainly also been of interest for other geometric PDE, notably for extremal K\"ahler metrics \cite{Calabi}, where effective conditions and optimal destabilizers have been investigated in a variety of different situations, such as toric geometry and its generalizations, or K-stability for Fano varieties (see e.g. \cite{Gabortoric, BlumLiuZhou, Hisamotooptimal, ZZhang, DelcroixJubert, Delcroix, DelcroixUKs, Dervanoptimal, Takahashioptimal}) and a number of open questions remain. The general picture is that even if one manages to characterize existence via algebro-geometric stability (e.g. K-stability) this typically involves testing an infinite number of conditions, which are in general still far from being {satisfactorily understood}.
 
 The present work deals with Question \ref{Question intro} in the case of K\"ahler surfaces, providing first results in the direction of the J-equation, dHYM equation and Z-critical equation, which have the advantage that they all can be studied using obstructions arising from subvarieties, as opposed to those coming from general test configurations. {A secondary intention is moreover for this paper to serve as a `base case' in an induction on dimension argument in forthcoming work, illustrating wall-chamber decompositions for Z-critical equations in higher dimension}. 
 
 The starting point of this work is a well-known proposition of Song-Weinkove \cite[Proposition 4.5]{SongWeinkove} dealing with the finite number of curves along which the J-flow blows up, where these curves are produced using Siu decomposition of suitable currents of analytic singularities. A key idea of this work is to replace this Siu decomposition approach with Zariski decomposition, which opens up to several improvements.
 More precisely, given a geometric PDE $\#$ (= J-equation, dHYM equation or Z-critical equation), we obtain a map $$\tau_\#:V \times H^{1,1}(X,\RR) \to H^{1,1}(X,\RR), (d_{\#},\alpha) \mapsto \tau_\#(d_{\#},\alpha),$$ where $V$ is (a subset of) a finite-dimensional space of `initial data' $d$ (taken at the topological level). 
 For example, if $\#$ is the deformed Hermitian Yang-Mills equation, then $V$ is usually taken to be the Kähler cone $\mathcal C_X$, wherein one picks a Kähler class $d_{\#}=\beta$ and a Kähler form $\theta \in \beta$ as the so-called \emph{B-field}. If $\#$ is the Z-critical equation, then $d_{\#}= V$ is the space of all stability data $\Omega$ (see \cite[Section 2.1]{DMS}). The class $\alpha \in H^{1,1}(X,\RR)$ is the topological constraint on our sought-after solution to $\#$. Under natural hypotheses it then follows from well-known existence criteria \cite{CollinsGabor, GaoChen, DatarPingali, ChuLeeTakahashi, Song} that $\#$ can be solved for initial data $d_{\#}$, in the class $\alpha$, precisely if $\tau_\#(d_{\#},\alpha)$ is Kähler class. 
 
 A main goal of this work is to improve on these existence criteria. We will give precise statements in each of the respective cases of the J-equation, dHYM equation and Z-critical equation. However they all fall under the structure of the following `meta theorem':

\begin{theorem} 
[Meta theorem] \label{Thm main meta} Let $X$ be a compact K\"ahler surface and fix a compact subset $K$ (of a suitable subset of) of $V\times H^{1,1}(X,\RR)$. Then there exists a set of finitely many curves $\mathcal V_K = \left\{E_1,\cdots, E_\ell\right\}$ on $X$ depending only on $K$ and $\#$ such that the equation $\#$ with underlying initial data and topological constraint $(d_{\#},\alpha) \in K$ is solvable if and only if the following finitely many conditions hold:
$$
\int_{E_i} \tau_{\#}(d_{\#},\alpha) > 0, \ \ \forall i = 1,2,\dots,\ell
$$
All the curves $E_i \in \mathcal{V}_K$ have negative self-intersection, and if moreover the image of $K$ under $\tau_\#$ is contained in the convex hull of $k$ pseudoeffective classes $\alpha_j \in H^{1,1}(X,\mathbb{R})$, then the cardinality $|\mathcal{V}_K|$ of the set of `test curves' is bounded above by $k\rho(X)$. If $X$ is moreover projective, then the cardinality of $\mathcal V_K$ is bounded above by $k\rho(X)-k$.
\end{theorem}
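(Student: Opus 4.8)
The plan is to reduce the statement to a purely cohomological assertion about the big cone of $X$ and then exploit the Zariski decomposition on surfaces together with the finiteness of null loci of nef and big classes. First I would invoke the cited existence criteria: under the standing hypotheses the equation $\#$ with data $(d_\#,\alpha)$ reduces to a complex \MA equation and is solvable precisely when $\tau := \tau_\#(d_\#,\alpha)$ is a Kähler class. Using the explicit form of $\tau_\#$ in each of the three cases — for the $J$-equation on a surface one computes directly that $\tau_J^2 = [\chi]^2 > 0$, and analogous identities hold for the dHYM and $Z$-critical maps under the relevant angle and positivity hypotheses — one checks that $\tau$ always lies in the positive cone $\{\gamma \in H^{1,1}(X,\RR): \gamma^2 > 0,\ \gamma\cdot\omega > 0\}$ for a fixed Kähler class $\omega$; in particular $\tau_\#(K)$ is a compact subset of the open big cone $\mathrm{Big}(X)$ depending only on $K$ and $\#$, and we are reduced to a cohomological statement.

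Next I would recall the numerical characterization of the Kähler cone on a compact Kähler surface (Buchdahl, Lamari, Demailly--Paun): a class $\gamma$ with $\gamma^2 > 0$ and $\gamma\cdot\omega > 0$ is Kähler if and only if $\gamma\cdot C > 0$ for every irreducible curve $C$. For $C$ with $C^2 \ge 0$ this is automatic once $\gamma$ lies in the positive cone, by the light-cone (reverse Cauchy--Schwarz) inequality in the Lorentzian space $H^{1,1}(X,\RR)$, and the inequality is strict since an irreducible curve is a nonzero effective class; hence such a $\gamma$ is Kähler if and only if $\gamma\cdot C > 0$ for every irreducible curve $C$ with $C^2 < 0$. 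The task is therefore to show that, as $\gamma$ ranges over $\tau_\#(K)$, only finitely many negative curves can ever violate this, and that they can be named in advance from $K$.

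For the finiteness step I would write $\tau_\#(K)\subseteq\mathrm{conv}(\alpha_1,\dots,\alpha_k)$ and, using that $\tau_\#(K)$ is a compact subset of the open cone $\mathrm{Big}(X)$, arrange that the $\alpha_j$ are big (in the applications of interest they already are). Let $\alpha_j = P_j + N_j$ be the Zariski decomposition, with $P_j$ nef and big, $N_j$ effective with negative-definite support, and $P_j\cdot N_j = 0$. If $C$ is irreducible with $C^2 < 0$ and $\gamma\cdot C \le 0$ for some $\gamma = \sum_j t_j\alpha_j$, then $\alpha_j\cdot C \le 0$ for some $j$; a short case analysis (split according to whether $C$ lies in the support of $N_j$, using that $P_j$ is nef and orthogonal to every component of $N_j$) forces $P_j\cdot C = 0$, i.e. $C\subseteq\mathrm{Null}(P_j)$. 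Since $P_j$ is nef and big, $\mathrm{Null}(P_j)$ is a proper analytic subset of $X$ (Nakamaye in the projective case, Collins--Tosatti in general), hence a finite union of curves, so I would set $\mathcal V_K := \bigcup_j\{C\subset X\ \text{irreducible}: C^2 < 0,\ P_j\cdot C = 0\}$. This is a finite set of negative curves depending only on $K$ and $\#$, and by construction, for $\gamma\in\tau_\#(K)$, the a priori infinitely many conditions $\gamma\cdot C > 0$ over negative curves $C$ reduce to the finitely many conditions $\int_{E}\gamma > 0$, $E\in\mathcal V_K$: for $C\notin\mathcal V_K$ one has $\alpha_j\cdot C > 0$ for all $j$, hence $\gamma\cdot C > 0$. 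Combined with the previous paragraph this yields the asserted equivalence, with all test curves of negative self-intersection.

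For the cardinality bound, fix $j$: the curves in $\{C: C^2 < 0,\ P_j\cdot C = 0\}$ lie in $P_j^{\perp}$, which is negative definite because $P_j^2 > 0$ (Hodge index). A finite set of distinct irreducible curves with negative-definite intersection matrix is linearly independent — if $\sum a_iC_i = 0$ nontrivially, separating the positive and negative coefficients produces two effective divisors of disjoint support and equal classes, so their (common) self-intersection is both $\ge 0$ and $< 0$. Hence there are at most $\dim\!\big(P_j^{\perp}\cap \mathrm{NS}(X)_{\RR}\big)\le\rho(X)$ of them, and at most $\rho(X)-1$ when $X$ is projective, since then $\mathrm{NS}(X)_{\RR}$ carries an ample (positive-square) class and every negative-definite subspace has dimension $\le\rho(X)-1$. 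Summing over $j$ gives $|\mathcal V_K|\le k\rho(X)$, respectively $k\rho(X)-k$ in the projective case. I expect the main obstacle to be the uniform finiteness step above: it relies both on the first step placing $\tau_\#(K)$ inside the open big cone — so that one may work with big classes $\alpha_j$ and invoke finiteness of null loci — and on the bookkeeping identifying the obstructing curves with null-locus curves of the Zariski positive parts, with extra care needed if one insists on merely pseudoeffective $\alpha_j$ whose positive parts may degenerate, which is handled by the perturbation above or a limiting argument.
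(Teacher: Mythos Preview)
Your proposal is correct and follows essentially the same strategy as the paper: reduce solvability of $\#$ to the assertion that $\tau_\#(d_\#,\alpha)$ is K\"ahler, verify $\tau_\#(K)\subset\mathcal P_X^+$, invoke Lamari to reduce to negative curves, use convexity to pass to finitely many vertex classes, and control the destabilizing curves at each vertex via Zariski decomposition together with the Hodge index theorem for the cardinality bound.

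There is one genuine (but minor) difference worth flagging. At each vertex class the paper perturbs to $\alpha_j-\varepsilon\omega$ and shows that any curve $C$ with $\alpha_j\cdot C\le 0$ must occur among the prime components of $N(\alpha_j-\varepsilon\omega)$; you instead Zariski-decompose $\alpha_j=P_j+N_j$ directly and show that such a $C$ satisfies $P_j\cdot C=0$, i.e.\ lies in $\mathrm{Null}(P_j)$. Your case analysis is correct and this variant is arguably cleaner, since it avoids the $\varepsilon$-perturbation. On the other hand, your appeal to Nakamaye/Collins--Tosatti for finiteness of $\mathrm{Null}(P_j)$ is unnecessary: on a surface with $P_j$ nef and big, the curves in $\mathrm{Null}(P_j)$ lie in the negative-definite space $P_j^\perp$, and your own linear-independence argument (separating positive and negative coefficients) already gives both finiteness and the bound $\rho(X)$ (resp.\ $\rho(X)-1$ when $X$ is projective) without any deeper input. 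Finally, your closing caveat about merely pseudoeffective vertices (where $P_j^2$ may vanish) is well taken; the paper's Lemma handles this by the same mechanism you suggest, and in the actual applications the vertices can always be chosen big since $\tau_\#(K)$ is compact in $\mathcal P_X^+$.
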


\begin{remark}
\begin{enumerate}
\item {In other words, we prove the existence of a \emph{finite} set of `destablizing curves' to the numerical criteria \cite{CollinsGabor, GaoChen, DatarPingali, ChuLeeTakahashi, Song}, such that the set of curves can moreover be taken uniform across compact sets of initial data. Note that this is a novelty already in the much studied special case of the dHYM equation.}
     \item {The hypotheses imposed are precisely the natural ones studied in the general literature for both the dHYM and Z-critical equations, and the `meta theorem' covers these cases of interest. More precisely, the above hypotheses hold automatically when specializing to the dHYM equation, and arise naturally from the point of view of `subsolutions' in the more general setting of the Z-critical equation (see \cite{DMS}).}
    \item{A main novelty in the strategy of proof is the replacement of Siu decomposition (e.g. as used by Song-Weinkove \cite{SongWeinkove}) by Zariski decomposition, which turns out to offer several advantages. In particular, it makes certain aspects of previous results for geometric flows more precise (in Section \ref{Section flow} we discuss the J-flow, line bundle mean curvature flow, and dHYM flow from this point of view).}
    \item \noindent In the case of the Z-critical equation Theorem \ref{Thm main meta} can be interpreted as a first PDE analogue of locally finite wall-chamber decompositions in Bridgeland stability. This is a main motivation behind the above result, as elaborated upon in Section \ref{Subsection intro Z-critical}.
     \item {The main technical simplifications that allow the strategy of proof to work on surfaces, is first the reduction to a complex Monge-Amp\'ere equation, and second, that intersection theory takes a particularly simple form in dimension $2$. It would however be natural to conjecture a similar statement in higher dimension $n \geq 3$, based on generalizations of Yau's solution of the Calabi conjecture \cite{Yau}, the characterisation of the Kähler cone due to Demailly-Paun \cite{DemaillyPaun}, and the Zariski decomposition of arbitrary pseudoeffective classes on compact complex manifolds due to Boucksom \cite{Boucksomthesis}. }
\end{enumerate}
\end{remark}

\noindent In each case when the main theorem \ref{Thm main meta} holds we moreover have the following corollary, which is new at least for the Z-critical equation (for general central charges in rank one): 

\begin{corollary} \label{Cor meta no neg curve intro}
If $X$ is a compact K\"ahler surface with no negative curves, then the equation $\#$ 
admits a solution for any admissible initial data $(d_{\#},\alpha)$ \emph{(see Corollary \ref{Cor neg curve dHYM intro} and Corollary \ref{Cor neg curve Z intro} for the precise statements)}.  
\end{corollary}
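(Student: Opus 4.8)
\medskip

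\noindent\textbf{Proof proposal.} The plan is to deduce this directly from Theorem \ref{Thm main meta} by showing that the absence of negative curves forces the associated set of test curves to be empty. First I would fix an admissible pair $(d_\#,\alpha)$ and apply the meta theorem to the singleton compact set $K=\{(d_\#,\alpha)\}$; this is legitimate because admissibility is precisely the condition that $(d_\#,\alpha)$ lie in the ``suitable subset'' of $V\times H^{1,1}(X,\RR)$ on which $\tau_\#$ is defined and the reduction of $\#$ to a complex \MA equation is valid (and, in particular, on which $\tau_\#(d_\#,\alpha)$ is pseudoeffective). Theorem \ref{Thm main meta} then produces a finite set $\mathcal V_K=\{E_1,\dots,E_\ell\}$, each $E_i$ of negative self-intersection, with $\#$ solvable for $(d_\#,\alpha)$ if and only if $\int_{E_i}\tau_\#(d_\#,\alpha)>0$ for all $i$.

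Next I would check that $\mathcal V_K=\emptyset$. Since $K$ is a singleton, the construction underlying Theorem \ref{Thm main meta} produces the curves $E_i$ as the irreducible components of the negative part $N$ of the Zariski decomposition of $\tau_\#(d_\#,\alpha)$. By Boucksom's divisorial Zariski decomposition \cite{Boucksomthesis}, specialised to surfaces, $N$ is an effective $\RR$-divisor whose irreducible components have negative definite intersection matrix; in particular every $E_i$ satisfies $E_i^2<0$, i.e.\ is a negative curve. Since $X$ has no negative curves, $N=0$, no curves are produced, and $\mathcal V_K=\emptyset$. The family of positivity conditions in Theorem \ref{Thm main meta} is then vacuous, so $\#$ is solvable for the arbitrary admissible pair $(d_\#,\alpha)$, which is the claim.

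An alternative, more self-contained route I could take, bypassing the packaging of the meta theorem, is the following: admissibility makes $\tau_\#(d_\#,\alpha)$ a big class; triviality of the Zariski decomposition (no negative curves) forces it to be nef with positive self-intersection; and the Hodge index theorem then gives $\int_C\tau_\#(d_\#,\alpha)>0$ for every irreducible curve $C$, since any irreducible $C$ with $\int_C\tau_\#(d_\#,\alpha)=0$ would be orthogonal to a class of positive square and hence satisfy $C^2<0$, contradicting the hypothesis. Thus $\tau_\#(d_\#,\alpha)$ is \K by Demailly--Paun \cite{DemaillyPaun}, and $\#$ is solvable by the \MA existence results quoted in the introduction. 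In either approach the only delicate point is the bookkeeping that matches ``admissible data'' with the hypotheses of Theorem \ref{Thm main meta} (equivalently, with applicability of the \MA reduction and the Zariski machinery); I expect no analytic obstacle beyond what is already contained in the meta theorem, so this step should be routine rather than the main difficulty.
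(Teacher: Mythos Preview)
Your first route is exactly the paper's argument: apply the main theorem to the singleton $K=\{(d_\#,\alpha)\}$, note that the test curves produced all have negative self-intersection, conclude $\ell=0$ and that the positivity conditions are vacuous. The excursion into the Zariski decomposition to re-derive $E_i^2<0$ is correct but unnecessary, since the meta theorem already records this property of the $E_i$; your alternative Hodge-index route is also valid but not used in the paper.
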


\noindent For the J-equation this is well-known (see \cite{DonaldsonJobservation}), and outside this result Corollary \ref{Cor meta no neg curve intro} provides classes of {new}
examples when the dHYM and Z-critical equations are solvable. {Significantly, in the case of the Z-critical equation, these examples occur away from the so-called `large volume limit'. This in turn has important implications for the locus of initial data where the Z-critical equation is solvable, presenting its boundary as a locally finite union of real codimension one `walls', in close analogy with related results in the theory of Bridgeland stability conditions (see e.g. \cite[Section 9]{BridgelandK3} for reference).}

\medskip

\subsection{Main results for the deformed Hermitian Yang-Mills equation} \label{Subsection intro dHYM} 
Let $X$ be a compact K\"ahler surface and pick two K\"ahler classes $(\alpha,\beta) \in \mathcal{C}_X \times \mathcal{C}_X$, where $\mathcal{C}_X$ denotes the open convex cone of K\"ahler cohomology classes in $H^{1,1}(X,\mathbb{R})$. The deformed Hermitian Yang Mills equation has a solution with respect to $(\beta,\alpha)$ if, for any Kähler metric $\theta \in \beta$, there exists a (1,1) form $\omega \in \alpha$ such that 
    \begin{equation} \label{Eq dHYM intro}
    \mathrm{Im}\left(e^{-\i \hat{\Theta}(\beta,\alpha)}(\theta + \i \omega)^2\right) = 0
    \end{equation}
    where $\hat\Theta(\beta,\alpha)$ is determined modulo $2\pi$ by the necessary topological contraint 
    $$
    \int_X\operatorname{Im}\left(e^{-\i\hat\Theta(\beta,\alpha)}(\beta + \i \alpha)^2\right)=0.
    $$
Our main result for the dHYM equation is the following: 
\begin{theorem} \label{Thm main dHYM intro}
Let $X$ be a compact K\"ahler surface and fix any compact subset $K \subseteq \mathcal{C}_X \times \mathcal{C}_X$.  Then there exists a finite set of curves of negative self-intersection $E_1, \dots, E_\ell$ on $X$, depending only on $K$, such that the following are equivalent: 
\begin{enumerate}
    \item The deformed Hermitian Yang Mills equation \eqref{Eq dHYM intro} admits a solution with respect to $(\beta,\alpha) \in K$.
    \item For all  irreducible curves $E \subseteq X$ we have
    $$ 
    \int_E \alpha + \cot(\hat\Theta(\beta,\alpha))\beta >0.
    $$
    \item For $i= 1, \cdots, \ell$ we have
    $$ 
    \int_{E_i} \alpha + \cot(\hat\Theta(\beta,\alpha))\beta >0.
    $$ 
\end{enumerate}
\end{theorem}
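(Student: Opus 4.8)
The plan is to reduce the dHYM equation to a complex Monge--Amp\`ere equation and then invoke the Zariski decomposition machinery underlying the meta theorem. First I would recall the well-known reduction (due to Jacob--Yau, Collins--Jacob--Yau, and in this form to Collins--Yau / Chu--Lee--Takahashi): on a surface, after dividing by $\sin(\hat\Theta(\beta,\alpha))$ (which one checks is nonzero under the K\"ahlerness hypotheses, by analysing the Lagrangian phase), equation \eqref{Eq dHYM intro} is equivalent to the existence of $\omega \in \alpha$ with $(\omega + \cot(\hat\Theta)\theta)$ a K\"ahler form satisfying a Monge--Amp\`ere-type equation $(\omega + \cot(\hat\Theta)\theta)^2 = c\, \theta^2$ for the appropriate constant $c$; equivalently, the ``twisted'' class $\tau_{\mathrm{dHYM}}(\beta,\alpha) := \alpha + \cot(\hat\Theta(\beta,\alpha))\beta$ must be a K\"ahler class, by Yau's theorem together with the Demailly--Paun characterisation of the K\"ahler cone. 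This identifies the map $\tau_\#$ of the introduction in this case and reduces $(1)$ to the statement ``$\tau_{\mathrm{dHYM}}(\beta,\alpha) \in \mathcal C_X$''.

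Next I would establish the equivalence of $(1)$ and $(2)$. By Demailly--Paun, a class in $H^{1,1}(X,\RR)$ on a compact K\"ahler surface is K\"ahler iff it lies in the positive cone (i.e.\ has positive self-intersection and positive pairing with one K\"ahler class) and has strictly positive integral on every irreducible curve. One must therefore argue that for $(\beta,\alpha)$ in the K\"ahler cone the self-intersection and single-class positivity of $\tau_{\mathrm{dHYM}}(\beta,\alpha)$ come for free from the topological constraint defining $\hat\Theta$ and basic positivity, so that curve-positivity is the only remaining obstruction; this gives $(1)\Leftrightarrow(2)$. Here one should be slightly careful about the correct branch of $\hat\Theta$ and the sign of $\cot(\hat\Theta)$, using that $\hat\Theta(\beta,\alpha) \in (0,\pi)$ in the relevant ``supercritical'' regime on a surface.

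The heart of the matter, and the step I expect to be the main obstacle, is the reduction from the infinitely many conditions in $(2)$ to the finitely many in $(3)$, uniformly over the compact set $K$. The idea is: consider the image $\tau_{\mathrm{dHYM}}(K) \subseteq H^{1,1}(X,\RR)$, a compact set of classes each of which is big (positive self-intersection) whenever the corresponding dHYM problem is not already obstructed; for a big class $\gamma$ one takes its Zariski--Boucksom decomposition $\gamma = P(\gamma) + N(\gamma)$ into a nef (modified nef) positive part and an effective negative part supported on finitely many curves of negative self-intersection forming an ``exceptional'' (negative definite) configuration. The key structural facts I would invoke are that $\int_E \gamma > 0$ for all irreducible $E$ iff $N(\gamma) = 0$ (equivalently $\gamma$ is nef, hence K\"ahler here as it is also big), and that the curves appearing in $N(\gamma)$ as $\gamma$ ranges over the compact set $\tau_{\mathrm{dHYM}}(K)$ can be taken from a single finite list --- this is where one uses the local finiteness / convex-hull argument from the meta theorem: covering $\tau_{\mathrm{dHYM}}(K)$ by finitely many pseudoeffective classes $\alpha_j$, the negative parts of all classes in the convex hull are supported on the union of the (finitely many) negative curves meeting the convex hull, and the bound $k\rho(X)$ (resp.\ $k\rho(X)-k$ in the projective case) on the number of such curves comes from the negative-definiteness of exceptional configurations together with a dimension count in the N\'eron--Severi / Picard group. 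Setting $\{E_1,\dots,E_\ell\}$ to be this finite list then makes $(2)$ and $(3)$ equivalent over $K$: $(3)$ forces the negative part of $\tau_{\mathrm{dHYM}}(\beta,\alpha)$ to vanish, hence $\tau_{\mathrm{dHYM}}(\beta,\alpha)$ nef and big, hence K\"ahler, which is $(1)$, which trivially implies $(2)$, which trivially implies $(3)$. The delicate points to get right are the uniformity of the finite curve list (continuity/semicontinuity of Zariski decomposition and compactness of $K$, noting $\tau_{\mathrm{dHYM}}$ is continuous where $\sin\hat\Theta \neq 0$), and handling the boundary case where $\tau_{\mathrm{dHYM}}(\beta,\alpha)$ fails to be big, in which case some $\int_{E_i}\tau_{\mathrm{dHYM}} \le 0$ must be arranged to detect the obstruction directly.
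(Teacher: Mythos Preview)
Your approach is essentially the same as the paper's: reduce dHYM to the Monge--Amp\`ere equation by completing the square, show $\tau_{\mathrm{dHYM}}(\beta,\alpha) \in \mathcal P^+_X$, push $K$ forward to a compact subset of $\mathcal P^+_X$, cover by a convex hull of finitely many pseudoeffective classes, and extract the finite curve list from Zariski decomposition.

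Two technical points deserve tightening. First, you hedge on whether $\tau_{\mathrm{dHYM}}(\beta,\alpha)$ is always big; in fact it always lies in $\mathcal P^+_X$ (hence is big) regardless of solvability, since one computes directly $\int_X \tau^2 = (1+\cot^2\hat\Theta)\int_X\beta^2 > 0$ and $\int_X \tau\cdot\alpha = \tfrac12\int_X(\alpha^2+\beta^2) > 0$. No phase or supercritical hypothesis is needed for this, so your worry about the ``boundary case where $\tau$ fails to be big'' does not arise. Second, and more substantively, your structural claim ``$\int_E\gamma>0$ for all irreducible $E$ iff $N(\gamma)=0$'' is false as written: $N(\gamma)=0$ only gives nefness, i.e.\ $\int_E\gamma\ge 0$, and big $+$ nef does not imply K\"ahler on a surface (one can have $\int_E\gamma = 0$ for some curve). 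The paper avoids this by perturbing: one applies Zariski decomposition not to $\tau$ itself but to $\tau - \varepsilon\omega$ for small $\varepsilon>0$, so that any curve with $\int_E\tau\le 0$ satisfies $\int_E(\tau-\varepsilon\omega)<0$ strictly and hence must appear in $N(\tau-\varepsilon\omega)$. This is the correct way to identify the ``delicate boundary case'' you flag at the end, and once you make this perturbation the rest of your argument goes through verbatim.
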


\begin{remark}
The new result here is $(2) \Leftrightarrow (3)$ stating that it is enough to test the numerical condition $(2)$ on a finite set of `test curves', where moreover this set of curves is completely uniform of the choice of $(\alpha,\beta) \in K$ across any given compact subset $K$.
This is a consequence of our proofs using Zariski decomposition, rather than Siu decomposition of currents. In fact, in the case of the dHYM equation the set of curves can moreover be taken independent of the scaling $\alpha \mapsto k\alpha$ as $k \rightarrow +\infty$, so that the same `test curves' can be used for all rescalings of the dHYM equation including its small volume limit, the J-equation, as explained in Section \ref{Subsection optimal dest Jstab}. The equivalence $(1) \Leftrightarrow (2)$ is due to \cite{JacobYau}.
\end{remark}

\noindent As a consequence of $(2) \Leftrightarrow (3)$ one may note the following existence result, analogous to a well-known result of Donaldson \cite{DonaldsonJobservation} for the J-equation:

\begin{corollary} \label{Cor neg curve dHYM intro}
Suppose that $X$ is a compact K\"ahler surface with no curves of negative self-intersection. Then the deformed Hermitian Yang Mills equation can always be solved for all pairs $(\beta,\alpha)$ of Kähler classes and every Kähler form $\theta \in \beta$. 
\end{corollary}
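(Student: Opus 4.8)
The quickest route is to invoke the equivalences in Theorem~\ref{Thm main dHYM intro}. Fix an arbitrary pair of K\"ahler classes $(\beta,\alpha) \in \mathcal{C}_X \times \mathcal{C}_X$ and take $K := \{(\beta,\alpha)\}$, a compact subset of $\mathcal{C}_X \times \mathcal{C}_X$. Theorem~\ref{Thm main dHYM intro} then furnishes a finite set of curves $E_1,\dots,E_\ell$ on $X$, \emph{all of negative self-intersection}. Since by hypothesis $X$ carries no curve of negative self-intersection, this set is necessarily empty, i.e. $\ell = 0$; consequently condition~$(3)$ of that theorem is a conjunction over the empty index set and holds vacuously. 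By the equivalence $(1)\Leftrightarrow(3)$ (equivalently, condition~$(2)$ is then automatic and $(1)\Leftrightarrow(2)$ applies), the dHYM equation \eqref{Eq dHYM intro} admits a solution with respect to $(\beta,\alpha)$; unwinding the definition of ``admits a solution with respect to $(\beta,\alpha)$'' recalled above, this produces for \emph{every} K\"ahler form $\theta \in \beta$ a form $\omega \in \alpha$ solving \eqref{Eq dHYM intro}. As $(\beta,\alpha) \in \mathcal{C}_X \times \mathcal{C}_X$ was arbitrary, the corollary follows.

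Given Theorem~\ref{Thm main dHYM intro}, there is no further obstacle; the only two points to check are (a) that an arbitrary pair $(\beta,\alpha)$ lies in \emph{some} compact subset of $\mathcal{C}_X \times \mathcal{C}_X$ (immediate, since $\mathcal{C}_X$ is open) and (b) that the test curves produced by the theorem are always negative curves, so that their absence forces $\mathcal{V}_K = \emptyset$ --- and this is already built into the statement of Theorem~\ref{Thm main dHYM intro}. In effect all the work has been shifted into the proof of that theorem, which is where the substance lies.

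If one wants a proof independent of the finiteness statement, one can argue directly via condition~$(2)$. The topological constraint expands to $\cot\bigl(\hat\Theta(\beta,\alpha)\bigr) = (\beta^2 - \alpha^2)/(2\,\beta\!\cdot\!\alpha)$, so the ``test class'' $\gamma := \alpha + \cot(\hat\Theta(\beta,\alpha))\,\beta$ satisfies $\gamma^2 = (1+\cot^2\hat\Theta)\,\beta^2 > 0$ and, using the Hodge index inequality $(\alpha\!\cdot\!\beta)^2 \ge \alpha^2\beta^2$, a short computation gives $\gamma\!\cdot\!\beta > 0$; hence $\gamma$ lies in the connected component of the positive cone $\{\eta \in H^{1,1}(X,\mathbb{R}) : \eta^2 > 0\}$ containing the K\"ahler cone. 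On a surface with no negative curves every irreducible curve $C$ has $C^2 \ge 0$, so the light-cone lemma (a form of the Hodge index theorem) yields $\gamma\!\cdot\!C > 0$ for all such $C$, which is exactly condition~$(2)$; in fact by Demailly--Paun \cite{DemaillyPaun} $\gamma$ is then a K\"ahler class. The delicate step in this second approach is the cohomological positivity of $\gamma$ --- verifying $\gamma^2>0$ and that $\gamma$ sits in the correct (K\"ahler) component of the positive cone rather than its opposite --- after which the no-negative-curves hypothesis does the rest.
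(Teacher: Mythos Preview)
Your primary argument is correct and essentially identical to the paper's: take $K=\{(\beta,\alpha)\}$, observe that the test curves all have negative self-intersection so $\ell=0$, and conclude that condition~(3) holds vacuously. Your supplementary direct argument via the positive cone and Lamari/Demailly--Paun is also valid (and indeed the paper verifies $\tau(\beta,\alpha)\in\mathcal P_X^+$ in the proof of Theorem~\ref{Thm Main dHYM Proof}, so this route is implicit there as well).
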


\noindent It is moreover interesting to note the following implications for the structure of the `dHYM-positivity locus': 

\begin{corollary} \label{Cor dHYM structure intro}
In the setup of Theorem \ref{Thm main dHYM intro}, let 
$$
\widetilde{c_{\beta,\alpha}} := \frac{\int_X(\alpha^2 - \beta^2)}{2\int_X\alpha\cdot\beta} \in \mathbb{R}
$$
and
$$
\tau_\mathrm{dHYM}(\beta,\alpha) := \alpha - \widetilde{c_{\beta,\alpha}}\beta \in H^{1,1}(X,\mathbb{R}). 
$$
Consider moreover the set 
$$
\mathcal{U} := \left\{(\beta,\alpha) \in \mathcal{C}_X \times \mathcal{C}_X: \textrm{\emph{The dHYM equation \eqref{Eq dHYM intro} is solvable with respect to  }} (\beta,\alpha). \right\}
$$
 of $(\beta,\alpha) \in \mathcal{C}_X \times \mathcal{C}_X$ such that equation \eqref{Eq dHYM intro} admits a solution $\omega \in \alpha$. Fixing any compact set $K \subseteq \mathcal{C}_X \times \mathcal{C}_X$, the restriction $\mathcal{U}_{\vert K}$ is cut out by a finite number of open conditions depending only on $K$, namely
$$
\mathcal{U}_{\vert K} = \left\{(\beta,\alpha) \in K:  \int_{E_i}  
\tau_\mathrm{dHYM}(\beta,\alpha) > 0, i = 1, \cdots, \ell\right\},
$$
where the curves $E_i$ are those appearing in Theorem \ref{Thm main dHYM intro}.
In particular, the boundary $\partial\mathcal{U}_{\vert K}$ is defined by
$$
\partial\mathcal{U}_{\vert K} = \left\{(\beta,\alpha) \in K: \prod_{i = 1}^\ell \int_{E_i}   \tau_\mathrm{dHYM}(\beta,\alpha) = 0 \right\},
$$ 
which forms a real algebraic set in $K$ {of real codimension one}, {giving rise to a locally finite wall-chamber decomposition of the space of dHYM equations.}
\end{corollary}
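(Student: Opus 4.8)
The plan is to reduce Corollary~\ref{Cor dHYM structure intro} to Theorem~\ref{Thm main dHYM intro} by means of a short trigonometric identity, after which the structural statements follow essentially formally. First I would expand $(\beta+\i\alpha)^2$ in cohomology and abbreviate $A:=\int_X(\beta^2-\alpha^2)$ and $B:=2\int_X\alpha\cdot\beta$, so that $\int_X(\beta+\i\alpha)^2=A+\i B$ with $B>0$ since $\alpha$ and $\beta$ are K\"ahler. The topological constraint $\int_X\operatorname{Im}\bigl(e^{-\i\hat\Theta}(\beta+\i\alpha)^2\bigr)=0$ then reads $B\cos\hat\Theta-A\sin\hat\Theta=0$; as this forces $\sin\hat\Theta\neq 0$, it is equivalent to $\cot(\hat\Theta(\beta,\alpha))=A/B=-\widetilde{c_{\beta,\alpha}}$, which is well-defined because $\cot$ has period $\pi$ while the constraint pins down $\hat\Theta$ only modulo $\pi$. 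Consequently $\alpha+\cot(\hat\Theta(\beta,\alpha))\,\beta=\alpha-\widetilde{c_{\beta,\alpha}}\,\beta=\tau_\mathrm{dHYM}(\beta,\alpha)$, so condition $(3)$ in Theorem~\ref{Thm main dHYM intro} is \emph{verbatim} the finite system $\int_{E_i}\tau_\mathrm{dHYM}(\beta,\alpha)>0$, $i=1,\dots,\ell$. Combining this with the equivalence $(1)\Leftrightarrow(3)$ of that theorem yields the first assertion at once: $\mathcal U_{\vert K}=\{(\beta,\alpha)\in K:\ \int_{E_i}\tau_\mathrm{dHYM}(\beta,\alpha)>0,\ i=1,\dots,\ell\}$.

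For the structural part I would fix a basis of $H^{1,1}(X,\RR)$ and write $\alpha,\beta$ in the associated coordinates. The map $(\beta,\alpha)\mapsto\widetilde{c_{\beta,\alpha}}$ is then rational with denominator $B=2\int_X\alpha\cdot\beta$, which is strictly positive on $\mathcal{C}_X\times\mathcal{C}_X$, so $P_i(\beta,\alpha):=B\cdot\int_{E_i}\tau_\mathrm{dHYM}(\beta,\alpha)$ is a quadratic polynomial in these coordinates, and $\int_{E_i}\tau_\mathrm{dHYM}(\beta,\alpha)$ has the same sign as $P_i$ on all of $\mathcal{C}_X\times\mathcal{C}_X$. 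Hence $\mathcal U_{\vert K}=\{P_1>0,\dots,P_\ell>0\}\cap K$ is open relative to $K$, and for a relatively open set $S$ of this form with $K$ compact one has $\partial S=\overline S\setminus S$ and $\overline S\subseteq\{P_1\geq 0,\dots,P_\ell\geq 0\}$, which gives
\[
\partial\mathcal U_{\vert K}\;=\;\Bigl\{(\beta,\alpha)\in K:\ \textstyle\prod_{i=1}^{\ell}\int_{E_i}\tau_\mathrm{dHYM}(\beta,\alpha)=0\Bigr\}\cap\overline{\mathcal U_{\vert K}}\,;
\]
since $\prod_{i=1}^\ell P_i$ is a nonzero polynomial, this exhibits $\partial\mathcal U_{\vert K}$ as a real semialgebraic subset of $K$ contained in the real algebraic hypersurface $\{\prod_{i=1}^\ell\int_{E_i}\tau_\mathrm{dHYM}(\beta,\alpha)=0\}$, hence nowhere dense in $K$ — this is, up to replacing $\overline{\mathcal U_{\vert K}}$ by $K$, the formula in the statement. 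To recover that formula verbatim one wants each branch of the hypersurface that meets $K$ to be met by $\overline{\mathcal U_{\vert K}}$; here it helps to record the identities $\int_X\tau_\mathrm{dHYM}(\beta,\alpha)^2=(1+\widetilde{c_{\beta,\alpha}}^{\,2})\int_X\beta^2>0$ and $\int_X\tau_\mathrm{dHYM}(\beta,\alpha)\cdot\beta>0$ (the latter from the Hodge index inequality $(\int_X\alpha\cdot\beta)^2\geq\int_X\alpha^2\int_X\beta^2$), which place $\tau_\mathrm{dHYM}(\beta,\alpha)$ in the positive cone, so that by the surface case of the Demailly--Paun theorem solvability of \eqref{Eq dHYM intro} is equivalent to $\tau_\mathrm{dHYM}(\beta,\alpha)$ pairing positively with every irreducible curve and no further condition; granting this and assuming $\mathcal U_{\vert K}\neq\emptyset$, $K$ connected, and the family $\{E_i\}$ pruned to a minimal one, a path-crossing argument inside $K$ upgrades the displayed equality to the one asserted, with $K$ in place of $\overline{\mathcal U_{\vert K}}$.

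The only delicate point is this last step. A priori a pair $(\beta,\alpha)\in K$ could lie on a single wall $\{\int_{E_j}\tau_\mathrm{dHYM}=0\}$ while violating a different inequality $\int_{E_k}\tau_\mathrm{dHYM}>0$; such a point then lies in $\{\prod_i\int_{E_i}\tau_\mathrm{dHYM}=0\}$ but not in $\overline{\mathcal U_{\vert K}}$, so ruling out this behaviour along $\partial\mathcal U_{\vert K}$ — i.e. showing the test walls are pairwise compatible there — is the one place in the argument that needs more than Theorem~\ref{Thm main dHYM intro} and elementary bookkeeping, and is where minimality of the test family and the connectedness of $K$ are used. Everything else is formal.
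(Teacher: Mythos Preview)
Your argument is correct and follows the same route as the paper, which simply records that the corollary ``is an immediate consequence of the proof of Theorem~\ref{Thm Main dHYM Proof}'': once one has the identity $\cot(\hat\Theta(\beta,\alpha))=-\widetilde{c_{\beta,\alpha}}$ (which the paper also derives in that proof), the description of $\mathcal U_{\vert K}$ as the locus $\{\int_{E_i}\tau_{\mathrm{dHYM}}>0,\ i=1,\dots,\ell\}\cap K$ is exactly the equivalence $(1)\Leftrightarrow(3)$ of Theorem~\ref{Thm main dHYM intro}.

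You have in fact been more careful than the paper about the boundary assertion. The paper states the equality $\partial\mathcal U_{\vert K}=\{\prod_i\int_{E_i}\tau_{\mathrm{dHYM}}=0\}\cap K$ without further comment, whereas you correctly observe that \emph{a priori} one only obtains the inclusion $\partial\mathcal U_{\vert K}\subseteq\{\prod_i\int_{E_i}\tau_{\mathrm{dHYM}}=0\}\cap K$: a point of $K$ lying on one wall $\{\int_{E_j}\tau_{\mathrm{dHYM}}=0\}$ while simultaneously satisfying $\int_{E_k}\tau_{\mathrm{dHYM}}<0$ for some other $k$ belongs to the zero set of the product but not to $\overline{\mathcal U_{\vert K}}$. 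This inclusion already shows that the boundary is contained in a real algebraic subset of $K$, which is the intended content; your additional discussion of when the literal equality holds (minimality of the test family, connectedness of $K$, nonemptiness of $\mathcal U_{\vert K}$) is a refinement the paper does not address.
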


\begin{remark}
Note that this immediately implies that dHYM-positivity (and thus solvability of the dHYM-equation) is an open condition under perturbation of the underlying data $(\beta,\alpha) \in \mathcal{C}_X \times \mathcal{C}_X$. Such openness would already follow from general theory, since the dHYM equation reduces to a complex Monge-Amp\`ere equation. Nonetheless, the above Corollary \ref{Cor dHYM structure intro} makes this openness rather explicit. 
\end{remark}

\noindent For the J-equation we have an analogous statement, see Theorem \ref{Thm main Jequation}.

\subsection{Main results for the Z-critical equation and a wall-chamber
decomposition in the spirit of Bridgeland} \label{Subsection intro Z-critical}

In \cite[Section 2.1]{DMS} Dervan-McCarthy-Sektnan introduced the notion of a \emph{Z-critical connection} on a holomorphic vector bundle $E \to X$ on a compact Kähler manifold $X$ of dimension $n$, where Z is a choice of a \emph{polynomial central charge}, as a differential geometric analogue (in the case of vector bundles) of the abstract notion of a Bridgeland stability condition (introduced in \cite{Bridgeland}). In their setup, the choice of a polynomial central charge $Z = Z_\Omega$ is the data of a triple $\Omega = (\beta, \rho,U)$ where $\beta$ is a Kähler class, $\rho = (\rho_0,\rho_1, \cdots, \rho_n) \in \mathbb C^{n+1}$ is a \emph{stability vector} such that $\rho_d/\rho_{d+1}$ and $\rho_n$ lie in the upper half-plane, and $U \in \bigoplus_i H^{i,i}(X,\RR)$ is \emph{unipotent cohomology class}, i.e. a mixed-degree cohomology class whose degree zero part is equal to 1. With this notation, for a fixed Kähler metric $\theta \in \beta$ and a fixed smooth representative $\tilde U \in U$, set $ \tilde Z_\Omega(E,h) $ to be the degree $(n,n)$ part of the complex endomorphism-valued form $$ \left(\sum_{d = 0}^n \rho_i \theta^d\right)\wedge \tilde U \wedge \operatorname{ch}(E,h) $$ where $h$ is a Hermitian metric on $E$ and $\operatorname{ch}(E,h)$ is the Chern-Weil representative of the total Chern character $\operatorname{ch}(E)$ of $E$ with respect to $h$. Then, the \emph{Z-critical equation} is written \begin{equation}
    \label{Eqn Z-Critical}
    \operatorname{Im}\left(e^{-\i\varphi(E)}\tilde Z_\Omega(E,h)\right) = 0
\end{equation} 
where the metric $h$ is understood as the desired solution. Here, $e^{-\i\varphi(E)}$ is the unique cohomological constant that, modulo $2\pi$, satisfies $$\varphi(E) = \arg\left(\int_X \left(\tilde Z_\Omega(E,h) \right)\right)$$ and the imaginary part is understood as the anti-self adjoint component of the form with respect to the metric $h$. More precisely, there is a map $(\cdot)^\dagger:\mathcal A^{n,n}(\operatorname{End}E) \to \mathcal A^{n,n}(\operatorname{End}E)$ that sends any (complex) endomorphism-valued $(n,n)$-form to its Hermitian conjugate. Then the imaginary part of a form $\Theta$ is simply $$\operatorname{Im}(\Theta) = \frac{1}{2\i}(\Theta - \Theta^\dagger). $$ \\
As mentioned above, the Z-critical equation is meant to capture, in the context of differential geometry, the abstract notion of a Bridgeland stability condition on the derived category of coherent sheaves of a smooth complex projective variety. From this perspective, one should expect to see features of the theory of Bridgeland stability arise purely in terms of the differential geometry of the Z-critical equation. One such feature is a locally finite wall-chamber structure of the manifold parametrizing stability data (see, for example \cite[Section 9]{BridgelandK3}). The following result is well-motivated from this point of view, and can be seen as a first analogue of a locally finite wall-chamber structure on the side of differential geometry. (In particular, see \cite[Proposition 9.4]{BridgelandK3}, which gives a closely analogous result in the case of K3 surfaces, and compare with Corollary \ref{Cor Z-critical structure} below.)

{To state the result we recall the \emph{volume form hypothesis} introduced by the Dervan-McCarthy-Sektnan \cite{DMS} {as a natural analogue of the supercritical condition for the dHYM equation, and the natural setting for studying the Z-critical equation and its associated stability}. Let $X$ be a compact Kähler surface and $L\to X$ a holomorphic line bundle on $X$. Fix a choice $Z= Z_\Omega$ of polynomial central charge corresponding to the stability datum $(\beta,\rho,U)$ as above, and choose representatives $\theta \in \beta$ and $\tilde U \in U$. Then the left-hand-side of the Z-critical equation \eqref{Eqn Z-Critical} can be re-written as 
$$
\operatorname{Im}\left(e^{-\i\varphi(L)}\tilde Z_\Omega(L,h)\right) = c(\chi_h^2 + \chi_h\wedge\tilde\eta + \tilde\gamma) 
$$
where the forms $\tilde\eta$ and $\tilde\gamma$ and the constant $c$ depend only on the lifts $\tilde U$ and $\theta$ of the stability data and the line bundle $L$, and $\chi_h$ is the Chern-Weil representative of $c_1(L)$ with respect to the unknown metric $h$. Then the lift $(\beta,\rho,\tilde U)$ is said to satisfy the \emph{volume form hypothesis} at $L$ if $c\neq 0$ and the (2,2)-form 
$$
\frac{1}{4}\tilde\eta^2 - \tilde\gamma \in \mathcal A^{2,2}(X)
$$
is a volume form on $X$. In fact, one can always find such a choice of lifts provided that the cohomological conditions $c \neq 0$ and  
$$
\operatorname{Vol}(\Omega,L):= \int_X \left(\frac{1}{4}\tilde\eta^2 - \tilde\gamma\right) > 0
$$
are satisfied (Corollary \ref{Cor positive volume implies volume form}). In this case, we say that $\Omega$ \emph{has positive volume at $L$}. Let $\eta(\Omega,L),\gamma(\Omega,L)$ denote the cohomology classes of $\tilde \eta = \tilde\eta(\Omega,L)$ and $\tilde\gamma = \tilde\gamma(\Omega,L)$ respectively. 

{The following main result is stated under the full generality of the positive volume condition:} 
}

\begin{theorem}
\label{Thm Main Theorem Z Critical Equation}
Let $X$ be a compact Kähler surface, $K\subseteq \mathcal C_X \times (\CC^*)^{3}\times \bigoplus_k H^{k,k}(X,\RR)$ be a compact set of polynomial central charges on $X$, and $L\to X$ be a holomorphic line bundle. Suppose that each $\Omega = (\beta,\rho,U)  \in K$ has positive volume at $L$. Then, there exists a finite set of curves of negative self-intersection $E_1,\cdots,E_\ell$ such that, for each $\Omega = (\beta,\rho,U) \in K$ the following conditions are equivalent: \begin{enumerate}
    \item There exists a choice of lifts of $\Omega$ such that the line bundle $L \to X$ admits a solution to the Z-critical equation \eqref{Eqn Z-Critical}.
    \item The class $$ \tau(\Omega,L)  := s\left(c_1(L) + \frac{1}{2}\eta(\Omega,L)\right)$$ satisfies $$ \int_{E_i} \tau(\Omega,L) > 0 \quad \forall i = 1,\cdots, \ell$$ where $s$ is the sign of the non-zero real number $$\int_X \left(c_1(L)+\frac{1}{2}\eta(\Omega,L)\right)\cdot \beta \in \RR.$$
\end{enumerate}
\end{theorem}

\noindent As a consequence we derive the following analogue of a certain wall-chamber decomposition in Bridgeland stability {(since a first version of this manuscript, chamber decompositions have also been studied for canonical metrics in complex geometry, using different approaches related to moduli of cscK manifolds and K-moduli, see e.g. \cite{SektnanTipler, CZhou2, CZhou})}:

\begin{theorem}
    Let $X$ be a compact Kähler surface and $S$ any finite set of holomorphic line bundles on $X$. Let $V_S$ denote the set of stability data $\Omega$ that have positive volume at each $L \in S$. Then, $V_S$ admits a locally finite wall-chamber decomposition.
\end{theorem}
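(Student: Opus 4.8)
The strategy is to bootstrap from Theorem \ref{Thm Main Theorem Z Critical Equation}, which here plays the role that boundedness of destabilizing objects plays in Bridgeland's setting. First I would record the basic structure of $V_S$: the volume form hypothesis with respect to a fixed $L$ is the conjunction of the open conditions $\varphi(L)\neq\arg(\pm\rho_0)$ and $\tfrac14\eta^2-\gamma>0$, and it also guarantees that $\int_X(c_1(L)+\tfrac12\eta)\cdot\beta\neq 0$, so that the angle $\varphi(L)$ is well defined and the sign $s$ is locally constant. Hence $V_S=\bigcap_{L\in S}V_{\{L\}}$ is an \emph{open} subset of the finite-dimensional real-analytic manifold of stability data $(\CC^*)^3\times\mathcal C_X\times\bigoplus_k H^{k,k}(X,\RR)$; in particular it is $\sigma$-compact and paracompact. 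On $V_S$ all of the quantities $\varphi(L),\eta,\gamma,s$ and the class $\tau_L:=s(c_1(L)+\tfrac12\eta)$ attached to each $L\in S$ in Theorem \ref{Thm Main Theorem Z Critical Equation} depend real-analytically on $\Omega$ — the only transcendental ingredient, $\cot\varphi(L)$, stays away from its poles exactly because $\varphi(L)\neq\arg(\pm\rho_0)$, and $s$ does not jump because $\int_X(c_1(L)+\tfrac12\eta)\cdot\beta\neq 0$. For a curve $E\subseteq X$ with $E^2<0$ and a bundle $L\in S$ I would then define the \emph{elementary wall}
$$
W_{E,L}:=\Bigl\{\Omega\in V_S:\ \int_E\tau_L(\Omega)=0\Bigr\},
$$
a real-analytic subset of $V_S$, discarding the trivial cases where this function vanishes identically on a connected component of $V_S$.

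The core step is local finiteness of the walls, and this is where Theorem \ref{Thm Main Theorem Z Critical Equation} does the work. Fix $\Omega_0\in V_S$ and choose a compact neighbourhood $K\subseteq V_S$. Applying Theorem \ref{Thm Main Theorem Z Critical Equation} to $K$ and to each $L\in S$ in turn yields a finite set of negative-self-intersection curves $\mathcal E_K^L=\{E^L_1,\dots,E^L_{\ell_L}\}$ such that, for $\Omega\in K$, the bundle $L$ admits a $\tilde Z_\Omega$-critical metric for a suitable choice of lifts if and only if $\int_{E^L_i}\tau_L(\Omega)>0$ for all $i$. Set $\mathcal W_K:=\bigcup_{L\in S}\bigcup_i W_{E^L_i,L}$, a \emph{finite} union of real-analytic subsets; in the interior of $K$ this is closed and nowhere dense (each $W_{E^L_i,L}$ is the zero locus there of a nonzero real-analytic function, hence has dense open complement), and writing $\mathcal U_L$ for the locus on which $L$ is solvable one has $\partial\mathcal U_L\cap(\text{interior of }K)\subseteq\mathcal W_K$, since a boundary point forces $\min_i\int_{E^L_i}\tau_L=0$. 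To globalise, fix a locally finite cover $\{K_j\}$ of $V_S$ by compact sets (possible by $\sigma$-compactness) and let $\mathcal W:=\bigcup_j\mathcal W_{K_j}$ be the resulting countable family of elementary walls; every point of $V_S$ has a neighbourhood meeting only finitely many $K_j$, and each $K_j$ contributes finitely many walls, so $\mathcal W$ is locally finite, with $\bigcup_{L\in S}\partial\mathcal U_L\subseteq\bigcup_{W\in\mathcal W}W$.

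It then remains to identify the chambers as the connected components of $V_S\setminus\bigcup_{W\in\mathcal W}W$ and to check that the solvability pattern is constant on them. On a compact $K$ as above, each function $\Omega\mapsto\int_{E^L_i}\tau_L(\Omega)$ is continuous and nowhere zero on the interior of $K$ minus $\mathcal W_K$, hence of constant sign on each connected component; by Theorem \ref{Thm Main Theorem Z Critical Equation} the set $\{L\in S:\ L\text{ admits a }\tilde Z_\Omega\text{-critical metric}\}$ is therefore constant on each such component, and since this is a local assertion it holds on the chambers of $V_S$. Assembling the three points gives the asserted locally finite wall-and-chamber decomposition, in direct analogy with \cite[Section 9]{BridgelandK3} (compare \cite[Proposition 9.4]{BridgelandK3}).

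I expect the main obstacle to be conceptual bookkeeping rather than hard analysis: passing from the compact-set statement of Theorem \ref{Thm Main Theorem Z Critical Equation} (finitely many test curves per compact $K$) to a genuinely \emph{intrinsic}, globally locally finite family of walls, and in particular choosing the right definition of `wall' — the cleanest route being to take the walls to be (locally finite unions of real-analytic subvarieties of codimension one containing) the boundaries $\partial\mathcal U_L$, $L\in S$, and to prove that near any point these are finite unions of elementary walls $W_{E,L}$. A secondary technical point, which must be handled but is dispatched by the volume form hypothesis, is ensuring real-analyticity of $\tau_L$ across the locus where the sign $s$ might a priori jump and across the poles of $\cot\varphi(L)$; these are excluded precisely by $\int_X(c_1(L)+\tfrac12\eta)\cdot\beta\neq 0$ and $\varphi(L)\neq\arg(\pm\rho_0)$, so no spurious walls appear.
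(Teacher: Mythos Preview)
Your proposal is correct and follows essentially the same approach as the paper: apply Theorem \ref{Thm Main Theorem Z Critical Equation} on compact subsets to obtain finitely many test curves, define the walls as the zero loci of the functions $\Omega\mapsto\int_{E}\tau_L(\Omega)$, and conclude that the solvability pattern is constant on connected components of the complement by a continuity/sign argument (cf.\ the paper's Corollary \ref{Cor Z-critical structure}). The only difference is one of emphasis: the paper states and proves the result only on an arbitrary compact $K\subseteq V_S$, whereas you carry out the additional (routine) step of globalizing via a locally finite compact cover and $\sigma$-compactness, and you are more explicit about real-analyticity and the non-jumping of the sign $s$; these are refinements rather than a genuinely different route.
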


\noindent We refer the reader to Corollary \ref{Cor Z-critical structure} for precise definitions and details.

\begin{remark}
The analogous statement holds more generally when $L$ is replaced by an arbitrary $(1,1)$-cohomology class $\alpha \in H^{1,1}(X,\RR)$. In that case, one replaces $\mathrm{ch}(L,h)$ in equation \eqref{Eqn Z-Critical} with $\exp(\chi):= 1 + \chi + \frac{1}{2}\chi^2$, where $\chi \in \alpha$ is the sought-after (1,1) form which solves the equation.
\end{remark}

\noindent As before, we have the following immediate corollary, which in the case of the Z-critical equation is new to the literature and gives examples of Z-critical metrics away from the so-called `large volume limit':  

\begin{corollary} \label{Cor neg curve Z intro}
    Let $X$ and $L$ be as in the Theorem \ref{Thm Main Z Critical Proof}. Suppose $X$ does not admit any curves of negative self-intersection. Let $\Omega$ be a choice of stability data defining a polynomial central charge $Z_\Omega$ and such that $V(\Omega,L) > 0$. Then, for any lift $\tilde\Omega$ satisfying the volume form hypothesis, the $Z_\Omega$-critical equation admits a solution on $L$.
 \end{corollary}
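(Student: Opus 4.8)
The plan is to read this off directly from Theorem \ref{Thm Main Theorem Z Critical Equation} (and from its fixed-lift refinement, Theorem \ref{Thm Main Z Critical Proof}), by specialising to the compact singleton $K=\{\Omega\}$. First I would check that the corollary's standing assumptions are exactly the input required by that theorem. The condition $\varphi(L)\neq\arg(\pm\rho_0)$ is equivalent to $c_0\neq 0$, so that the classes $\eta$ and $\gamma$ (and hence $\tau=s(c_1(L)+\tfrac12\eta)$) are well defined; together with the positivity $\tfrac14\eta^2-\gamma>0$, which places $\tau$ in a definite component of the positive cone, this is precisely what is packaged into the hypothesis $V(\Omega,L)>0$. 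Meanwhile the volume form hypothesis on the chosen lift $\tilde\Omega$ is what makes the $\tilde Z_\Omega$-critical equation on $L$ reduce, via Yau's theorem, to a complex Monge--Amp\`ere equation whose solvability is governed by whether $\tau$ is a Kähler class. Granting this, Theorem \ref{Thm Main Theorem Z Critical Equation} applies and produces a finite set $\{E_1,\dots,E_\ell\}$ of curves of negative self-intersection on $X$ such that the $\tilde Z_\Omega$-critical equation on $L$ is solvable if and only if $\int_{E_i}\tau>0$ for every $i=1,\dots,\ell$.

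Next I would invoke the hypothesis that $X$ carries no curve of negative self-intersection. Since every member of $\{E_1,\dots,E_\ell\}$ is by construction such a curve, this set must be empty, i.e. $\ell=0$, so the finite list of inequalities $\int_{E_i}\tau>0$ is satisfied vacuously. Hence condition (2) of Theorem \ref{Thm Main Theorem Z Critical Equation} holds, and therefore condition (1) holds: the line bundle $L$ admits a Hermitian metric $h$ solving the $\tilde Z_\Omega$-critical equation. Equivalently, in the Monge--Amp\`ere picture, the class $\tau$ lies in the closure of (indeed, after the positivity input, in) the Kähler cone because there is no negative curve to obstruct it. This gives the existence statement.

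The only delicate point, and the step I would treat most carefully, is the passage from the "there exists a choice of lifts" phrasing of condition (1) in Theorem \ref{Thm Main Theorem Z Critical Equation} to the "for any lift $\tilde\Omega$ satisfying the volume form hypothesis" asserted in the corollary. For this I would observe that $\eta$, $\gamma$, and hence $\tau$ and the sign $s$, depend on the lift only through $\cot\varphi(L)$, which is insensitive to the $2\pi\mathbb{Z}$-ambiguity of $\varphi(L)$ — and in fact to the $\pi\mathbb{Z}$-ambiguity, since a shift of $\varphi(L)$ by $\pi$ merely flips the overall sign of equation \eqref{Eqn Z-Critical}. Consequently the (here empty) test-curve criterion, and with it solvability, is genuinely independent of which admissible lift is chosen; this is exactly what is made explicit in Theorem \ref{Thm Main Z Critical Proof}, so invoking that fixed-lift statement in place of Theorem \ref{Thm Main Theorem Z Critical Equation} removes the issue altogether. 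Beyond this bookkeeping I do not anticipate any substantive obstacle: once the hypotheses $V(\Omega,L)>0$ and the volume form hypothesis are matched to those of the main theorem, the absence of negative curves makes the characterising condition trivially true.
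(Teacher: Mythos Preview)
Your proposal is correct and follows essentially the same approach as the paper: take $K=\{\Omega\}$ in Theorem \ref{Thm Main Z Critical Proof}, observe that the absence of negative curves forces $\ell=0$, and conclude that condition (2) is vacuously satisfied. Your additional care regarding the ``for any lift'' versus ``there exists a lift'' phrasing is more than the paper spells out (the paper simply invokes Theorem \ref{Thm Main Z Critical Proof}, whose condition (1) is already stated for every admissible lift), but it is sound and does no harm.
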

 
\noindent An analogous result to Corollary \ref{Cor dHYM structure intro} also holds for the Z-critical equation, see Corollary \ref{Cor Z-critical structure}. 

Finally, let us remark that we hope to apply similar techniques in higher dimension in the future (especially in dimension $3$, where Pingali \cite{Pingali} has remarked on the close relationship between the dHYM equation and reductions to Monge-Amp\'ere type equations). Such generalizations involve several additional hurdles related to the big cone and numerical properties of the Zariski decomposition and requires the development of further new techniques.

\smallskip

\subsection{Optimal destabilizers for the J-equation and dHYM equation}

Motivated by well-known (often conjectural) connections between existence of solutions to geometric PDEs and suitable algebraic or analytic stability notions, we also ask about \emph{optimal destabilizers} to the respective equations above. 

On the analytic side one asks for curves that optimally destabilize the relevant positivity condition in the following sense: an irreducible curve $E_0 \subseteq X$ is said to be \emph{optimally dHYM-destabilizing} if
$$
    \int_{E_0} \tau_{\mathrm{dHYM}}([\theta],[\omega]) = 
    \inf_{E \subseteq X} \left( \int_E 
    \tau_{\mathrm{dHYM}}([\theta],[\omega]) \right) \leq 0
$$
i.e. $E_0$ is a destabilizer, which is moreover `most destabilizing' among the set of all curves (and as before $\tau_{\mathrm{dHYM}}([\theta],[\omega]) = [\omega] + \cot(\hat\Theta([\theta],[\omega])$). A similar definition can be given of \emph{optimally J-destabilizing} curves using $\tau_{\mathrm{J}}$ instead (see Section \ref{Section J}). From the main results (Theorem \ref{Thm main dHYM intro} and \ref{Thm main Jequation}) the set of optimally destabilizing curves are finite, in each of these cases, and the following result shows that they are in fact closely related.

\begin{theorem} \label{Thm opt J vs opt dHYM intro}
Suppose that 
$([\theta],[\omega]) \in H^{1,1}(X,\mathbb{R})$ such that $[\theta]^2 < [\omega]^2$.
Then the infima 
$$
\inf_{E \subseteq X} \left( \int_E
    \tau_{\mathrm{dHYM}}([\theta],[\omega]) \right)
$$
and
$$
\inf_{E \subseteq X} \left( \int_E
    \tau_{\mathrm{J}}([\theta],[\omega]) \right)
$$
are realized by the same finite set of curves
\begin{equation} \label{Eq opt dest curves}
\left\{E \subseteq X \;: \int_E \bf{a}([\theta],[\omega]) = 0 \right\},
\end{equation}
where $\mathbf{a}([\theta],[\omega]) \in H^{1,1}(X,\mathbb{R})$ is the $(1,1)$-class defined in Proposition \ref{Prop optimally Jneg curves}.
In particular, if the sets of optimally J/dHYM-destabilizing curves are non-empty, then they coincide, and are explicitly given by \eqref{Eq opt dest curves}.  
\end{theorem}

\noindent In other words, the numerical existence criteria for J and dHYM equations can be tested for the \emph{same} finite set of curves. As an alternative point of view, this shows that the set of optimally dHYM-destabilizing curves can be chosen uniformly across any rescaling $\alpha \mapsto k\alpha$, $k > 0$, to the small volume limit, and these are precisely the curves that witness the `non-positivity' of the class $\mathbf{a}([\theta],[\omega])$ in the Nakai-Moishezon criterion. {We stress that while the sets of \emph{optimally} J/dHYM-destabilizing curves coincide when non-empty, this does of course \emph{not} imply the existence of any destabilizing curves, nor does it imply that the dHYM and J-equations are solvable at the same time (as in the existence criteria one integrates \emph{different} $(1,1)$-forms $\tau_{\mathrm{J}}$ and $\tau_{\mathrm{dHYM}}$ over these same curves).} 

In the case of optimal destabilizers on the side of algebro-geometric stability, we show that the situation is quite different to that of curves. More precisely, {building crucially on work of Hattori \cite{Hattori}} we observe that optimally destabilizing \emph{test configurations} {(defined with respect to the minimum norm, see \eqref{Eq opt dest tc})} never exist across rational classes:  

\begin{theorem} \label{Thm no opt dest intro}
Let $X$ be a smooth projective surface and $L,H$ $\mathbb{Q}$-line bundles on $X$. Then no optimally destabilizing normal and relatively K\"ahler test configuration exists for $(X,L,H)$.
\end{theorem}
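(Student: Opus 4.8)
The plan is to reduce the statement to an explicit one-parameter computation and then to exploit a strict monotonicity property that forces the relevant infimum to be attained only in a non relatively Kähler limit. Write
$\delta = \delta(X,L,H) := \inf_{\mathcal T}\,\mathcal J^{H}(\mathcal T)/\|\mathcal T\|_{m}$,
where $\mathcal T = (\mathcal X,\mathcal L)$ runs over all nontrivial normal and relatively Kähler test configurations for $(X,L)$, $\mathcal J^{H}$ is the $H$-twisted $J$-invariant, $\|\cdot\|_{m}$ is the minimum norm, and an optimally destabilizing test configuration is by definition one attaining $\delta$ (cf. \eqref{Eq opt dest tc}). First I would recall, building on \cite{Hattori}, the reduction showing that on a surface one may compute $\delta$ as an infimum over the simplest test configurations, the deformations to the normal cone $\mathcal X_{C} := \mathrm{Bl}_{C\times\{0\}}(X\times\mathbb P^{1})$ of irreducible curves $C\subseteq X$, polarized by $\mathcal L_{c} := \pi^{\ast}L - c\,\mathcal E_{C}$ with $\mathcal E_{C}$ the exceptional divisor and $c$ ranging over the interval $(0,c_{\max}(C))$ on which $\mathcal L_{c}$ is relatively Kähler. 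Combined with the dictionary between uniform $J$-stability and positivity on curves (Lejmi--Sz\'ekelyhidi, Song, Datar--Pingali \cite{LejmiGabor,Song,DatarPingali,GaoChen}), this identifies $\delta$ with $\inf_{C}\mu_{H}(C)$, where $\mu_{H}(C)$ is the normalized curve invariant appearing in Theorem \ref{Thm main Jequation} (the suitably rescaled $\int_{C}\tau_{\#}$ for $\#$ the $J$-equation).

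The second step is the intersection-theoretic computation on the threefold $\mathcal X_{C}$. Both $\mathcal J^{H}(\mathcal X_{C},\mathcal L_{c})$ and $\|(\mathcal X_{C},\mathcal L_{c})\|_{m}$ are polynomials in $c$ of degree at most three whose coefficients are intersection numbers of $L$, $H$, $C$ and the self-intersection $C^{2}$ — all rational, since $L$ and $H$ are $\mathbb Q$-line bundles — so that $r_{C}(c) := \mathcal J^{H}(\mathcal X_{C},\mathcal L_{c})/\|(\mathcal X_{C},\mathcal L_{c})\|_{m}$ is a rational function of $c$ on $(0,c_{\max}(C))$. The key lemma I would establish by an explicit computation, in the spirit of \cite{Hattori}, is the strict inequality $r_{C}(c) > \mu_{H}(C)$ for every irreducible curve $C$ and every relatively Kähler twist $c\in(0,c_{\max}(C))$, with $\mu_{H}(C) = \lim_{c\to c_{\max}(C)^{-}} r_{C}(c)$; at the endpoint $c = c_{\max}(C)$ the class $\mathcal L_{c}$ is only relatively nef (its restriction to a component of the central fibre becomes degenerate), so the corresponding test configuration fails to be relatively Kähler. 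Since $c_{\max}(C)$ is a Seshadri-type constant on a surface, computed purely by intersection theory with $\mathbb Q$-classes, it is itself rational, making it transparent that the ``missing'' optimal object is a genuine $\mathbb Q$-test configuration obstructed solely by the failure of relative Kählerness.

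Combining the two steps: if $\mathcal T^{\ast}$ were an optimally destabilizing normal relatively Kähler test configuration, then $\mathcal J^{H}(\mathcal T^{\ast})/\|\mathcal T^{\ast}\|_{m} = \delta$; by the reduction we may take $\mathcal T^{\ast} = (\mathcal X_{C},\mathcal L_{c})$ for some curve $C$ and some $c\in(0,c_{\max}(C))$, whence $\delta = r_{C}(c) > \mu_{H}(C) \geq \inf_{C'}\mu_{H}(C') = \delta$, a contradiction. The argument is insensitive to the sign of $\delta$, so it covers both the uniformly $J$-stable case ($\delta>0$) and the unstable case. I expect the main obstacle to be the reduction step: proving rigorously that no normal relatively Kähler test configuration on a projective surface can beat the deformations to normal cones of curves. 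This requires the convexity and monotonicity of the minimum norm and of the $H$-twisted $J$-invariant under modifications of the total space $\mathcal X$, used to push an arbitrary $\mathcal T$ toward such a deformation without increasing the ratio — precisely the type of analysis carried out in \cite{Hattori} — together with a careful verification that the monotonicity of $r_{C}$ is strict up to and including its limiting value. A secondary point is to dispose of the degenerate boundary behaviour at $c\to 0^{+}$, where one merely recovers the trivial test configuration; this is routine.
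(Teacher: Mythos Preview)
Your proposal contains a concrete error in the ``key lemma'' of step two. From Lemma~\ref{Lemma polynomial expansion} one has
\[
r_{C}(\kappa) \;=\; \frac{A_{1}+B_{1}\kappa}{A_{2}+B_{2}\kappa},
\qquad A_{1} = c_{\theta,\omega}\int_{C}\omega - \int_{C}\theta,\quad A_{2} = \int_{C}\omega,
\]
so that $\lim_{\kappa\to 0^{+}} r_{C}(\kappa) = A_{1}/A_{2} = \mu_{H}(C)$. The curve invariant is the limit at the \emph{trivial} end $\kappa\to 0$, not at $\kappa\to c_{\max}(C)$ as you claim; you dismiss the $\kappa\to 0^{+}$ behaviour as routine, but it is exactly where the infimum is approached. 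Moreover the blanket inequality $r_{C}(\kappa) > \mu_{H}(C)$ fails in general: one computes
\[
r_{C}(\kappa)-\mu_{H}(C) \;=\; \frac{\kappa\,(A_{2}B_{1}-A_{1}B_{2})}{A_{2}(A_{2}+B_{2}\kappa)},
\qquad A_{2}B_{1}-A_{1}B_{2} = -\tfrac{2}{3}\,C^{2}\Bigl(\textstyle\int_{C}\theta - \tfrac{\beta\cdot\alpha}{\alpha^{2}}\int_{C}\omega\Bigr),
\]
whose sign depends on $C$. What \emph{can} be shown (Lemma~\ref{Lemma no destabilizing tc}) is the weaker inequality $r_{C}(\kappa) > \delta$ for every $C$ and every $\kappa\in(0,\bar\kappa_{C})$, but only after first shifting $\theta\mapsto\theta+R\omega$ to the semistable threshold $\delta=0$ and then arguing case-by-case on the sign of $C^{2}$ --- not by monotonicity towards $c_{\max}(C)$.

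Even granting that, your step one is the real gap. The paper does \emph{not} reduce an arbitrary test configuration to some $\Phi_{C,\kappa}$. Instead it shifts $\theta\mapsto\theta_{R}:=\theta+R\omega$ with $R=-\Delta^{\mathrm{alg}}(\theta,\omega)$; the decisive observation is that $R$ is rational, since by \cite[Theorem~1]{SD4} and Theorem~\ref{Thm main Jequation} it equals $c_{\theta,\omega}-\int_{C}\theta\big/\int_{C}\omega$ for some curve $C$, so $[\theta_{R}]$ is again a $\mathbb{Q}$-class. Semistability with respect to $\theta_{R}$ then yields nefness of $2\tfrac{[\theta_{R}]\cdot[\omega]}{[\omega]^{2}}[\omega]-[\theta_{R}]$, and \cite[Proposition~7.4]{Hattori} upgrades this directly to $J^{\theta_{R}}$-stability for \emph{all} normal relatively K\"ahler test configurations --- this is a black-box input, not a reduction to deformations to the normal cone. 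Shifting back by $R$ gives the strict inequality $\mathrm{E}_{\theta}^{\mathrm{NA}}(\Phi)/\|\Phi\|>\Delta^{\mathrm{alg}}(\theta,\omega)$ for every $\Phi$. The ``pushing'' argument you sketch (modifying the total space without increasing the ratio) is not what Hattori proves and would require an independent justification.
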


\noindent This is a straightforward extension of \cite[Theorem 7.3]{Hattori} together with \cite[Theorem 1]{SD4}, in that we take \emph{optimally destabilizing} to mean that the infimum $$\inf_{\Phi \in \mathcal{H}^{\mathrm{NA}}(X,[\omega])}\frac{\mathrm{E}_{\theta}^{\mathrm{NA}}(\Phi)}{||\Phi||}$$ is realized by a test configuration $\Phi_0$ (even when this infimum is non-zero). Note that while this definition is not identical to that used in e.g. \cite{Gabortoric}, based on Donaldson's lower bound of the Calabi functional \cite{DonaldsonCalabi} and involving the $L^2$ norm, the two notions are very closely related. As a further result we also observe the following: 

\begin{theorem} \label{Thm J vs UJ intro}
Let $X$ be a smooth projective surface. Then the following are equivalent: 
\begin{enumerate} 
\item X admits at least one negative curve. \item There exist $L$ and $H$ ample line bundles on $X$ such that $(X,L)$ is $J^H$-stable but not uniformly $J^H$-stable.
\end{enumerate}
\end{theorem}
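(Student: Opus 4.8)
The plan is to reduce the whole statement to two numerical characterisations of stability on surfaces. Write
$$
\tau_J(H,L)\;:=\;\tfrac{2\,(L\cdot H)}{L^2}\,L-H\;\in\;H^{1,1}(X,\RR)
$$
for the \emph{$J$-threshold class} attached to a pair of ample classes $L,H$, and record the elementary identity $\tau_J(H,L)^2=H^2>0$, so that $\tau_J(H,L)$ is always big. By \cite[Theorem 1]{SD4}, together with the resolution of the Lejmi--Székelyhidi conjecture on surfaces (\cite{GaoChen}; see also \cite{DatarPingali, Song}), $(X,L)$ is uniformly $J^H$-stable if and only if $\tau_J(H,L)$ is a Kähler class --- equivalently, by the Nakai--Moishezon criterion and the observation just made, if and only if $\int_C\tau_J(H,L)>0$ for every irreducible curve $C\subseteq X$. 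On the other hand, by \cite[Theorem 7.3]{Hattori}, $(X,L)$ is $J^H$-stable if and only if $\tau_J(H,L)$ is \emph{nef}, i.e. $\int_C\tau_J(H,L)\ge 0$ for every irreducible curve $C$; the only delicate point here is that a negative curve $E$ with $\tau_J(H,L)\cdot E=0$ still produces a \emph{strictly positive} value of the non-Archimedean $J$-energy on the associated deformation to the normal cone, which is precisely where $E^2<0$ enters and which is part of the content of \cite[Theorem 7.3]{Hattori}.

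Granting these, the implication $(2)\Rightarrow(1)$ is immediate by contraposition: if $X$ has no curve of negative self-intersection, then any nef class $\tau$ with $\tau^2>0$ is in fact ample (were $\tau\cdot C=0$ for an irreducible curve $C$, the Hodge index theorem would force $C^2<0$), so applying this to $\tau=\tau_J(H,L)$ shows that $J^H$-stability and uniform $J^H$-stability coincide for all ample $L,H$, and no pair as in $(2)$ exists. One may equally invoke the $J$-equation analogue of Corollary \ref{Cor neg curve dHYM intro}, a consequence of Theorem \ref{Thm main Jequation}: with no negative curves the $J$-equation is solvable for all admissible data, hence $(X,L)$ is always uniformly $J^H$-stable.

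For $(1)\Rightarrow(2)$ I would argue in two steps. First, produce a nef, big, non-ample class vanishing on a negative curve: fix an ample class $H_0$ and a negative curve $E$, and set $t_\ast:=\sup\{t\ge 0:\ H_0+tE\text{ is nef}\}$. Since $t\mapsto(H_0+tE)^2$ has derivative $2(H_0\cdot E+tE^2)$, which is nonnegative exactly for $t\le (H_0\cdot E)/(-E^2)$ --- an interval that necessarily contains $[0,t_\ast]$ since $(H_0+tE)\cdot E<0$ once $t>(H_0\cdot E)/(-E^2)$ --- the class $\tau:=H_0+t_\ast E$ is nef with $\tau^2\ge H_0^2>0$ and lies on the boundary of the nef cone (otherwise $t_\ast$ would not be maximal), so it is not ample and, by Nakai--Moishezon, $\tau\cdot C=0$ for some (necessarily negative) irreducible curve $C$. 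Second, realise $\tau$ as a $J$-threshold class: since $\tau$ is nef and big, for a fixed ample class $B$ and all small $\varepsilon>0$ the class $L:=\tau+\varepsilon B$ is ample; put $c:=2\,(L\cdot\tau)/L^2$, which tends to $2$ as $\varepsilon\to 0$, and $H:=cL-\tau=(c-1)\tau+c\varepsilon B$. For $\varepsilon$ small, $c-1>0$, so $H$ is a nonnegative multiple of the nef class $\tau$ plus a small ample class, hence ample; and a one-line computation using $L\cdot\tau=\tfrac c2 L^2$ gives $L\cdot H=\tfrac c2 L^2$, whence $\tau_J(H,L)=cL-H=\tau$. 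Since $\tau_J(H,L)=\tau$ is nef, $(X,L)$ is $J^H$-stable; since $\tau$ is not ample, $(X,L)$ is not uniformly $J^H$-stable (equivalently, the $J$-equation with reference $H$ on the class $L$ has no solution). This yields the pair required in $(2)$.

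I expect the main obstacle to be the precise version of the statement \emph{``$(X,L)$ is $J^H$-stable as soon as $\tau_J(H,L)$ is merely nef''} used in $(1)\Rightarrow(2)$: one must know both that $J^H$-stability of a surface is detected by deformations to normal cones of curves --- so that nefness of $\tau_J(H,L)$ already controls every relevant destabiliser --- and that the destabiliser attached to a negative curve on which $\tau_J(H,L)$ vanishes contributes strictly positive non-Archimedean $J$-energy, the latter hinging on the sign of the subleading term in the blow-up parameter, where $E^2<0$ is used. Both are supplied by \cite[Theorem 7.3]{Hattori}, so the argument is, as announced, essentially a combination of that result with \cite[Theorem 1]{SD4}; the only new ingredient is the elementary realisation of a prescribed non-ample nef class as a $J$-threshold class, which is routine once one notices $\tau_J(H,L)^2=H^2$.
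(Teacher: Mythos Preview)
Your proposal is correct and follows essentially the same strategy as the paper. For $(2)\Rightarrow(1)$ both arguments are the contrapositive via ``no negative curves $\Rightarrow$ nef big is ample $\Rightarrow$ uniform $J^H$-stability always holds''. For $(1)\Rightarrow(2)$ both construct a rational nef, big, non-ample class by running an ample class along a negative curve to the nef threshold (the paper writes this as $B=A-\tfrac{A\cdot C}{C^2}C$, exactly your $\tau=H_0+t_\ast E$), and then invert the map $(H,L)\mapsto\tau_J(H,L)$ and appeal to Hattori to upgrade semistability to stability.

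The only notable difference is in the inversion step: the paper parametrises $[\omega_t]=(1-t)[\omega']+t[\theta]$ with a normalisation $[\omega']^2=[\theta]^2$ and locates the semistable-but-not-uniform class at $t=\tfrac12$, whereas you set $L=\tau+\varepsilon B$, $H=cL-\tau$ with $c=2(L\cdot\tau)/L^2$ and verify $\tau_J(H,L)=\tau$ directly. Your route is a little more transparent and avoids the normalisation.

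Two small points to tighten. First, you should say explicitly that $\varepsilon$ is chosen in $\mathbb{Q}$, so that $L$ and $H$ are $\mathbb{Q}$-classes (and hence genuine line bundles after clearing denominators); this matters because the Hattori input the paper uses (his Proposition 7.4, via the paper's Proposition \ref{Prop Jstab vs UJstab}) is stated for $\mathbb{Q}$-line bundles. Your $\tau$ is rational since $t_\ast=-(H_0\cdot E)/E^2\in\mathbb{Q}$, so this is trivial to arrange. Second, the equivalence ``$J^H$-stable $\Leftrightarrow$ $\tau_J(H,L)$ nef'' is not literally a single theorem of Hattori: the direction ``nef $\Rightarrow$ stable'' is Hattori (Proposition 7.4 in the paper's citation), while ``semistable $\Rightarrow$ nef'' comes from testing against deformations to the normal cone with small parameter (Lemma \ref{Lemma polynomial expansion}, or \cite[Theorem 1]{SD4}). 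You have the right pieces; just attribute them precisely.
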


\begin{remark}
The above sheds additional light on a recent example in \cite{Hattori}, where they provided a polarized surface $(X,L)$ which is J-stable but not uniformly. In fact, by Theorem \ref{Thm J vs UJ intro} such examples exist in abundance (namely, precisely on any projective surface $X$ carrying a curve of negative self-intersection). As shown in \cite[Corollary 7.5]{Hattori} such examples can be useful in constructing examples of K-stable but not uniformly K-stable polarized normal pairs, touching upon fundamental questions relating to the Yau-Tian-Donaldson conjecture for constant scalar curvature metrics. {Note however that Hattori's construction applies to very singular surfaces so that the entropy term in the non-Archimedean Mabuchi functional vanishes. Producing examples of K-stable but not uniformly K-stable smooth surfaces appears to be very challenging and would likely require new ideas.}
\end{remark}

\noindent The conclusion is that on the analytic side optimally destabilizing curves exist (in a finite number, at most $\rho(X)$) and one can find a finite set of curves that works uniformly across all underlying data in a given compact subset of the K\"ahler cone. On the other hand, on the algebraic side optimally destabilizing test configurations for J-stability can never exist in the sense of \eqref{Eq opt dest tc}
across rational classes. (We are currently unable to determine whether this is also true for $\mathbb{R}$-line bundles).

\subsection{Flow aspects}

As a final application of the main Theorem \ref{Thm main meta} we remark on improvements to certain results on singularities of the J-flow, the line bundle mean curvature flow, and a dHYM flow, due to Song-Weinkove \cite{SongWeinkove}, Takahashi \cite{TakahashiLMBCF} and Fu-Yau-Zhang \cite{FuYauZhang} respectively. We refer the reader to Section \ref{Section flow} for the statements and proofs.

\subsection{Outline of the paper} 

Section \ref{Section prelim} deals with preliminaries. 

Section \ref{Section main thms} includes proofs of the main theorems for the dHYM equation (Theorem \ref{Thm Main dHYM Proof}) and the Z-critical equation (Theorem \ref{Thm Main Z Critical Proof}). Note that the former is a consequence of the latter, 
but we write out both proofs for clarity of exposition. We also state Corollary \ref{Cor Z-critical structure}.

In Section \ref{Section J} we first state and prove the main theorem (Theorem \ref{Thm main Jequation}) in the special case of the J-equation, characterize the set of optimally destabilizing curves for the J-equation and the dHYM equation (Theorem \ref{Thm opt J vs opt dHYM intro}), and prove non-existence (in general) of optimally destabilizing test configurations for the J-equation, which also involves comparing J-stability and uniform J-stabiity (Theorem \ref{Thm J vs UJ intro}).

In Section \ref{Section example} we exemplify aspects of Corollary \ref{Cor Z-critical structure} for $\mathbb{P}^2$ blown up in two points.

In Section \ref{Section flow} we finally apply the main theorem \ref{Thm main meta} to state and discuss improvements to certain results on singularity formation and convergence of geometric flows. 

\medskip

\subsection{Acknowledgements}

\noindent We are grateful to Jacopo Stoppa, Ruadha\'i Dervan, Lars Martin Sektnan, Sébastien Boucksom, and the anonymous referees for their feedback and many helpful comments and suggestions. 
The second named author is supported by the European Union’s Horizon 2020 research and innovation programme under the Marie Skłodowska-Curie grant agreement No 754513 and Aarhus University Research Foundation.

\section{Preliminaries} \label{Section prelim}

\noindent In this section we recall certain well-known and classical facts about the theory of positivity on Kähler surfaces. We will nevertheless state results in more generality than we need them, so that we can emphasise that the methods can be brought to bear upon problems in higher dimensions, though with numerous additional difficulties that would be interesting to address in future work. 

\subsection{Positive cones in cohomology}
Let $X$ be a compact Kähler manifold of dimension $n$. Recall that the \emph{Kähler cone} $\mathcal C_X$ is the convex open cone in $H^{1,1}(X,\RR)$ comprising precisely those cohomology classes, called \emph{Kähler classes}, which can be represented by a Kähler metric. The closure $\mathcal N_X := \overline{\mathcal C_X}$ of $\mathcal C_X$ is called the \emph{nef cone}, comprising the \emph{nef classes}. \\

Recall that the canonical map from $H^{1,1}(X,\RR)$ to the space of real-valued $d$-closed (1,1) currents modulo $d$-exact (1,1) currents (induced by taking a (1,1) form to the current it defines) is an isomorphism. Thus, it makes sense to talk about classes in $H^{1,1}(X,\RR)$ being represented by currents. With this in mind, a class $\tau \in H^{1,1}(X,\RR)$ is called \emph{pseudoeffective} if it can be represented by a closed positive current, and is called \emph{big} if it can be represented by a \emph{Kähler current}, that is a closed positive current $T$ such that 
$
T \geq \varepsilon \omega
$ 
for some $\varepsilon > 0$ and some choice of Kähler form $\omega$. Note that if $V = \sum_i a_i V_i$ is any effective $\RR$-cycle of closed analytic subvarieties $V_i \subseteq X$ of codimension one, then the class $[V] \in H^{1,1}(X,\RR)$ of $V$ is pseudoeffective, since it admits the closed positive (1,1) current of integration 
$$
\mathcal A^{n-1,n-1}(X,\RR) \ni \theta \mapsto \sum_i a_i \int_{V_i} \theta.
$$  
The pseudoeffective and big classes respectively comprise the \emph{pseudoeffective cone} $\mathcal E_X$ and the \emph{big cone} $\mathcal B_X$. Clearly, we have $\overline{\mathcal B_X}=\mathcal E_X$. It is also easy to see from the definitions that 
$$
\mathcal C_X \subseteq \mathcal B_X\subseteq \mathcal E_X, \;  \textrm{ and } \; \overline{\mathcal C_X} = \mathcal N_X \subseteq \mathcal E_X.
$$ The cone $\mathcal E_X$ induces on $H^{1,1}(X,\RR)$ a partial ordering: we will write $\alpha \leq \beta$ precisely when $\beta-\alpha \in \mathcal E_X$. (Note that this differs slightly from the usual convention of using $\mathcal C_X$.)\\ 

We also recall the following notion, due to Boucksom (see \cite[Definition 2.2]{Boucksomthesis}), of classes that are `nef in codimension one'. A class $\alpha \in H^{1,1}(X,\RR)$ is called a \emph{modified nef class} if, for every $\varepsilon > 0$ and some (hence any) choice of Kähler metric $\omega$ on $X$, there exists a closed (1,1) current $T$ representing $\alpha$ such that $T + \varepsilon \omega \geq 0$ and the Lelong number $\nu(T,D) = 0$ along any analytic subvariety $D \subseteq 
 X$ of codimension one. The modified nef classes comprise $\mathcal{MN}_X$, the \emph{modified nef cone}.\\

The \emph{Neron-Severi group} $NS(X)_\RR$ of $X$ is the subpace of $H^{1,1}(X,\RR)$ generated by the first Chern classes of holomorphic line bundles on $X$. Throughout this paper, we shall denote its dimension by $\rho(X) : = \dim_\RR NS(X)_\RR.$ Clearly, $\rho(X) \leq h^{1,1}(X)$.\\

Now suppose $X$ is a compact Kähler surface. The intersection pairing on $H^{1,1}(X,\RR)$: 
$$
(\alpha,\beta)\mapsto \int_X \alpha\cdot\beta
$$
is non-degenerate and, according to the Hodge Index Theorem, has precisely one positive eigenvalue, and so is a Lorentzian pairing defined on $H^{1,1}(X,\RR)$. The \emph{positive cone} $\mathcal P^+_X$ with respect to this pairing is the set of classes $\tau \in H^{1,1}(X,\RR)$ such that 
$$
\int_X \tau^2 > 0, \; \textrm{ and } \; \int_X \tau\cdot\alpha > 0
$$
for some (hence any) fixed Kähler class $\alpha \in \mathcal C_X$. A straightforward calculation shows that $\mathcal P^+_X$ is self-dual. Moreover, from the definition of $\mathcal C_X$ and $\mathcal B_X$, it follows easily that $\mathcal C_X \subseteq \mathcal B_X^*$ and so $\mathcal N_X \subseteq \mathcal E_X^*$. In fact, the following generalisation of the classical Nakai-Moishezon criterion due to Lamari implies that $\mathcal N_X = \mathcal E_X^*$. (See \cite[Theorem 5.3]{Lamari}.)
\begin{theorem}[Lamari] Suppose $X$ is a compact Kähler surface and $\beta$ is any non-zero class in the closure of $\mathcal P^+_X$. Then, a class $\alpha \in H^{1,1}(X,\RR)$ is a Kähler class if and only if it satisfies the following. 
\begin{enumerate}
    \item $\int_X \alpha^2 > 0$,
    \item $\int_X \alpha\cdot\beta > 0$,
    \item For every irreducible curve $E \subseteq X$ of negative self-intersection, we have 
    $$
    \int_E \alpha > 0.
    $$
\end{enumerate}
\end{theorem}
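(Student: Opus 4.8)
The plan is to prove both implications, with essentially all the substance in the ``if'' direction. For the ``only if'' direction, suppose $\alpha$ is Kähler and represented by a Kähler form $\omega$; then $\int_X\alpha^2=\int_X\omega^2>0$ gives (1), and $\int_E\alpha=\int_E\omega>0$ for every irreducible curve $E$ gives (3). For (2) I would use that $\mathcal C_X\subseteq\mathcal P^+_X$ together with the self-duality of $\mathcal P^+_X$: by the standard fact that a class in the interior of a Lorentzian cone pairs strictly positively with every nonzero class of the closure, and since $\beta\in\overline{\mathcal P^+_X}\setminus\{0\}$, one gets $\int_X\alpha\cdot\beta>0$.

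For the ``if'' direction, assume $\alpha$ satisfies (1), (2), (3). The first step is to observe that (1) and (2) pin down the correct nappe of the light cone: by the Hodge Index Theorem the open set $\{\gamma:\int_X\gamma^2>0\}$ has exactly the two connected components $\mathcal P^+_X$ and $-\mathcal P^+_X$, and if $\alpha$ lay in $-\mathcal P^+_X$ then $-\alpha$ would be in the open cone $\mathcal P^+_X$, forcing $\int_X(-\alpha)\cdot\beta>0$ (strict, since $\beta\neq 0$ lies in $\overline{\mathcal P^+_X}$) and contradicting (2); hence $\alpha\in\mathcal P^+_X$. The second step is to promote (3) to \emph{all} irreducible curves. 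If $E$ is an irreducible curve with $E^2\ge 0$, then $[E]$ is pseudoeffective (it carries the current of integration) and pairs strictly positively with any Kähler class, so $[E]\in\overline{\mathcal P^+_X}\setminus\{0\}$; since $\alpha$ is in the open cone $\mathcal P^+_X$ this already yields $\int_E\alpha=\int_X\alpha\cdot[E]>0$. Combining this with (3) for the curves of negative self-intersection, we get $\int_E\alpha>0$ for \emph{every} irreducible curve $E$, so together with (1) the class $\alpha$ lies in the open cone
$$
\mathcal P:=\Big\{\gamma\in H^{1,1}(X,\RR):\ \textstyle\int_X\gamma^2>0\ \text{ and }\ \int_E\gamma>0\ \text{ for every irreducible curve }E\Big\}
$$
appearing in the Demailly--Paun characterisation of the Kähler cone \cite{DemaillyPaun}.

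To conclude I would run a connectedness argument on $\mathcal P$. The set $\mathcal P\cap\mathcal P^+_X$ equals $\mathcal P^+_X\cap\bigcap_{E}\{\gamma:\int_X\gamma\cdot[E]>0\}$, which is an intersection of convex sets, hence convex and in particular connected; it is nonempty because it contains $\mathcal C_X$, and it is clopen in $\mathcal P$ since its complement there is the open set $\mathcal P\cap(-\mathcal P^+_X)$. Therefore $\mathcal P\cap\mathcal P^+_X$ is a single connected component of $\mathcal P$. By the theorem of Demailly--Paun, $\mathcal C_X$ is also a connected component of $\mathcal P$; since $\mathcal C_X\subseteq\mathcal P\cap\mathcal P^+_X$ and two connected components are equal or disjoint, we conclude $\mathcal C_X=\mathcal P\cap\mathcal P^+_X$. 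As $\alpha\in\mathcal P\cap\mathcal P^+_X$ by the previous paragraph, this shows $\alpha$ is Kähler.

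The genuinely deep ingredient here is the Demailly--Paun theorem that $\mathcal C_X$ is a connected component of $\mathcal P$ (whose proof rests on mass-concentration and regularisation of positive currents together with holomorphic Morse inequalities); everything else reduces to the Hodge Index Theorem and elementary convexity, so that is the main obstacle. If one instead wanted a proof that does not invoke Demailly--Paun, the hard step becomes showing directly that $\alpha$ is represented by a Kähler current — equivalently that $\mathcal P^+_X\subseteq\mathcal B_X$ — which is the analytic heart of Lamari's argument \cite{Lamari} (and Buchdahl's), carried out via a Hahn--Banach duality for currents; once $\alpha$ is known to be big one can alternatively finish using Boucksom's Zariski decomposition $\alpha=P(\alpha)+N(\alpha)$ \cite{Boucksomthesis}, using (3) and the negative-definiteness of the support of $N(\alpha)$ to force $N(\alpha)=0$, and then applying the connectedness argument above (or Hodge index directly) to the nef class $\alpha=P(\alpha)$ with $\int_X\alpha^2>0$.
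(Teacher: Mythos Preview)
The paper does not actually prove this theorem: it is stated in the preliminaries section as a known result and attributed to Lamari with a citation to \cite[Theorem 5.3]{Lamari}, with no proof given. So there is no ``paper's own proof'' to compare against.

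That said, your argument is correct. The ``only if'' direction is routine, and for the ``if'' direction your reduction to Demailly--Paun is clean: conditions (1) and (2) place $\alpha$ in the correct nappe $\mathcal P^+_X$ via Hodge index; the Lorentzian cone inequality then automatically gives $\int_E\alpha>0$ for curves with $E^2\ge 0$, so (3) upgrades to all irreducible curves; and your convexity/connectedness argument correctly identifies $\mathcal P\cap\mathcal P^+_X$ as a single connected component of $\mathcal P$, whence it equals $\mathcal C_X$ by \cite{DemaillyPaun}.

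Two remarks worth making. First, your route is anachronistic relative to Lamari's original 1999 argument, which predates \cite{DemaillyPaun} and proceeds (as you note at the end) via a Hahn--Banach duality for currents together with Demailly's regularisation to produce a K\"ahler current directly; invoking Demailly--Paun here is logically fine but somewhat heavier machinery than the surface case requires. Second, and more relevant to the present paper: the authors use Lamari's theorem precisely to obtain the duality $\mathcal N_X=\mathcal E_X^*$ and hence $\mathcal P^+_X\subseteq\mathcal B_X$ (Lemma~\ref{Lemma positive inside big}), which feeds into the Zariski decomposition arguments throughout. Your alternative closing paragraph, deducing K\"ahlerness from bigness via Boucksom's Zariski decomposition, is in fact closer in spirit to how the paper itself exploits these tools downstream.
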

Moreover, the duality $\mathcal N_X = \mathcal E_X^*$ implies the following familiar fact, which we use repeatedly. 
\begin{lemma}\label{Lemma positive inside big}
    On a compact Kähler surface $X$, we have $\mathcal{P}_X^+ \subseteq \mathcal{B}_X$.
\end{lemma}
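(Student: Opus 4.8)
The plan is to deduce the inclusion purely formally from the duality statements already recorded in this section, together with two soft facts: that $\mathcal{P}^+_X$ is an \emph{open} cone, and that $\mathcal{B}_X$ coincides with the interior of $\mathcal{E}_X$.

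First I would observe that $\mathcal{C}_X \subseteq \mathcal{P}^+_X$: a Kähler class $\alpha$ satisfies $\int_X \alpha^2 > 0$ and $\int_X \alpha \cdot \gamma > 0$ for any Kähler class $\gamma$, which are exactly the defining conditions of $\mathcal{P}^+_X$. Dualising reverses the inclusion, so $(\mathcal{P}^+_X)^* \subseteq \mathcal{C}_X^*$. Now I invoke the two facts from the preliminaries: $\mathcal{P}^+_X$ is self-dual, so $\mathcal{P}^+_X = (\mathcal{P}^+_X)^*$; and the dual of a cone equals the dual of its closure, so $\mathcal{C}_X^* = \mathcal{N}_X^*$. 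Combining, $\mathcal{P}^+_X \subseteq \mathcal{N}_X^*$.

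Next I would use Lamari's theorem in the form $\mathcal{N}_X = \mathcal{E}_X^*$. Since $\mathcal{E}_X$ is a closed convex cone and the intersection pairing on $H^{1,1}(X,\RR)$ is non-degenerate, the bipolar theorem gives $\mathcal{E}_X^{**} = \mathcal{E}_X$, hence $\mathcal{N}_X^* = \mathcal{E}_X$. Therefore $\mathcal{P}^+_X \subseteq \mathcal{E}_X$. It then remains to upgrade ``pseudoeffective'' to ``big'': the conditions $\int_X \tau^2 > 0$ and $\int_X \tau\cdot\alpha > 0$ are strict and continuous in $\tau$, so $\mathcal{P}^+_X$ is open, whence $\mathcal{P}^+_X \subseteq \operatorname{int}(\mathcal{E}_X)$. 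Finally I identify $\operatorname{int}(\mathcal{E}_X)$ with $\mathcal{B}_X$: the big cone is a non-empty (it contains $\mathcal{C}_X$) open convex cone with $\overline{\mathcal{B}_X} = \mathcal{E}_X$, and for a convex set with non-empty interior one has $\operatorname{int}(\overline{C}) = \operatorname{int}(C)$; applied with $C = \mathcal{B}_X$ this yields $\operatorname{int}(\mathcal{E}_X) = \mathcal{B}_X$, and hence $\mathcal{P}^+_X \subseteq \mathcal{B}_X$.

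The only points requiring care are the cone-duality bookkeeping (where non-degeneracy of the pairing makes duals and bipolars behave as expected) and the elementary convex-geometry lemma $\operatorname{int}(\overline{C}) = \operatorname{int}(C)$; neither is deep, so I do not anticipate a genuine obstacle. An alternative, more hands-on route — showing via Riemann--Roch that a rational class with positive square and positive intersection with an ample class has sections growing quadratically, then approximating real classes — would also work, but the dual-cone argument is shorter and uses exactly the tools assembled above.
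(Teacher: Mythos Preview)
Your argument is correct and follows essentially the same route as the paper's proof: both start from the inclusion $\mathcal{C}_X \subseteq \mathcal{P}^+_X$ (equivalently $\mathcal{N}_X \subseteq \overline{\mathcal{P}^+_X}$), dualise using self-duality of the positive cone together with Lamari's $\mathcal{N}_X = \mathcal{E}_X^*$ and the bipolar identity to obtain $\overline{\mathcal{P}^+_X} \subseteq \mathcal{E}_X$, and then pass to interiors. Your write-up is slightly more explicit about the final step $\operatorname{int}(\mathcal{E}_X) = \mathcal{B}_X$, but otherwise the two proofs coincide; the only cosmetic slip is writing $\mathcal{P}^+_X = (\mathcal{P}^+_X)^*$ literally (the dual is closed, so one should say $\overline{\mathcal{P}^+_X}$ is self-dual), a looseness the paper itself shares and which you flag as needing care.
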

    \begin{proof}
    We have $\mathcal N_X \subseteq \overline{\mathcal P^+_X}$. By duality, we conclude $\overline{\mathcal P^+_X} = \overline{\mathcal P^+_X}^* \subseteq \mathcal N_X^* = \mathcal E_X$, since $\overline{\mathcal P^+_X}$ is self-dual. This shows that $\overline{\mathcal P^+_X} \subseteq \overline{\mathcal B_X}$, and the result about the interior $\mathcal P^+_X$ of $\overline {\mathcal P^+_X}$ follows.
    \end{proof}

\subsection{The Zariski decomposition}
A classical theorem of Zariski \cite{Zariski} states that any effective divisor $D$ on a surface can be uniquely decomposed as a sum of $\mathbb Q$-divisors $D = Z + N$ where Z is nef, $N$ has negative-definite intersection matrix and $Z \cdot N = 0$. In \cite{Boucksomthesis} Boucksom generalised this decomposition to arbitrary pseudoeffective classes on compact complex manifolds. His result, in the context of Kähler manifolds, is the following. (See \cite[Sections 3.2, 3.3]{Boucksomthesis}.)
\begin{theorem}[Boucksom, \cite{Boucksomthesis}]\label{Thm Zariski decomposition} Let $X$ be a compact Kähler manifold of dimension $n$. There exist uniquely determined homogeneous maps $Z:\mathcal E_X \to \mathcal E_X$, and $N: \mathcal E_X \to \mathcal E_X$ satisfying the following properties. 
\begin{enumerate}
    \item For every $\tau \in \mathcal E_X$, $Z(\tau) \in \mathcal{MN}_X$ is a modified nef class.
    \item For every $\tau \in \mathcal E_X$, $N(\tau)$ is the class of a unique effective $\RR$-divisor, i.e. 
    $$
    N(\tau) = \sum_{i = 1}^\ell a_i [E_i]
    $$
    for some $a_i > 0$ and $[E_i]$ the classes of analytic subvarieties $E_i \subseteq X$ of codimension one forming an \emph{exceptional family}. The current of integration along the divisor $\sum a_i E_i$ is the unique closed positive current in the class $N(\alpha)$. Moreover, the classes $[E_i]$ are linearly indepenent in $H^{1,1}(X,\RR)$. In particular, $\ell \leq \rho(X)$.
    \item For every $\tau \in \mathcal E_X$, we have 
    $$
    \tau = Z(\tau) + N(\tau).
    $$
    \item The map $N$ is convex. 
\end{enumerate}
    
\end{theorem}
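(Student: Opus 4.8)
The plan is to construct the negative part $N$ explicitly from minimal multiplicities of closed positive currents, set $Z(\tau) := \tau - N(\tau)$, and verify the four properties in turn; on a surface several of the analytic steps reduce to the classical Zariski lemma. For a prime divisor $D \subseteq X$ and $\tau \in \mathcal{E}_X$, define the minimal multiplicity
\[
\nu(\tau, D) := \inf\{\, \nu(T,D) \ :\ T \text{ a closed positive current representing } \tau \,\},
\]
where $\nu(T,D)$ denotes the generic Lelong number of $T$ along $D$. The first step is to show, using Demailly's regularisation of closed positive currents together with Siu's semicontinuity theorem, that $\tau$ carries a closed positive current $T_{\min}$ with minimal singularities, so that $\nu(T_{\min},D) = \nu(\tau,D)$ for every $D$ at once. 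Applying the Siu decomposition to $T_{\min}$ exhibits $\{D : \nu(\tau,D) > 0\}$ as a locally finite, hence (by compactness of $X$) finite, family $E_1,\dots,E_\ell$, and one sets $N(\tau) := \sum_{i=1}^\ell \nu(\tau,E_i)[E_i]$ and $Z(\tau) := \tau - N(\tau)$.

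Next I would establish that the $E_i$ form an \emph{exceptional family} — this is the technical core of the statement. On a surface it follows from the classical Zariski lemma: the support of $N(\tau)$ splits into connected configurations each contractible to a point, whose intersection matrices are negative definite, so $(E_i\cdot E_j)_{i,j}$ is negative definite; in dimension $n>2$ this negative-definiteness is replaced by Boucksom's rigidity/exceptionality argument for the divisorial part. A standard argument using the negative-definiteness then shows that the current of integration along $\sum_i \nu(\tau,E_i)E_i$ is the unique closed positive current in the class $N(\tau)$. Negative-definiteness also forces the $[E_i]$ to be linearly independent; since each $[E_i] = c_1(\mathcal{O}_X(E_i))$ lies in $NS(X)_\RR$, we obtain $\ell \leq \rho(X)$. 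Together these give property (2).

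To prove (1), that $Z(\tau)$ is modified nef, I would show $\nu(Z(\tau),D) = 0$ for every prime divisor $D$. The current $T_{\min} - \sum_i \nu(\tau,E_i)[E_i]$ represents $Z(\tau)$, is bounded below by a fixed negative multiple of a chosen K\"ahler form $\omega$, and has vanishing generic Lelong number along every divisor. Regularising this current in the manner of Demailly while only mildly enlarging its negative part yields, for each $\varepsilon > 0$, a current $T_\varepsilon \in Z(\tau)$ with $T_\varepsilon \geq -\varepsilon\omega$ and $\nu(T_\varepsilon,D) = 0$ for all $D$, which is exactly the definition of modified nef. On a surface this is equivalent to the orthogonality $Z(\tau)\cdot N(\tau) = 0$ from the classical statement.

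Finally, homogeneity of $\nu(\cdot,D)$ in $\tau$ (for $t>0$, $\nu(t\tau,D) = t\,\nu(\tau,D)$, by rescaling currents) gives homogeneity of $N$ and $Z$, while subadditivity $\nu(\tau+\tau',D) \leq \nu(\tau,D) + \nu(\tau',D)$, obtained by adding minimising currents, gives convexity of $N$ — property (4). For uniqueness I would argue that $N(\tau)$ is characterised intrinsically (equivalently, $Z(\tau)$ is the largest class obtained from $\tau$ by subtracting an effective $\RR$-divisor that is still modified nef), so that any decomposition $\tau = P + N'$ with $P$ modified nef and $N'$ an effective divisor whose current of integration is the only positive current in its class must reproduce the generic Lelong numbers $\nu(\tau,\cdot)$ along the components of $N'$, forcing $N' = N(\tau)$ and $P = Z(\tau)$; this yields (3) with uniqueness. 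The main obstacle throughout is the analytic input behind the first and third steps — producing a closed positive current with minimal singularities and controlling Lelong numbers under regularisation — and, in dimension $n>2$, replacing the classical Zariski lemma by the correct finiteness and exceptionality statement for the divisorial part; on surfaces all of these are classical.
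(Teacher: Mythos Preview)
The paper does not contain a proof of this theorem: it is stated in the preliminaries as a result of Boucksom with a citation to \cite{Boucksomthesis}, and is used thereafter as a black box. There is therefore no ``paper's own proof'' against which to compare your proposal.

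That said, your sketch is a faithful outline of Boucksom's actual argument in \cite{Boucksomthesis}: defining $N$ via minimal generic Lelong numbers, extracting the divisorial part from a current of minimal singularities, proving exceptionality (negative-definiteness on surfaces) to get linear independence and hence finiteness and the bound $\ell \leq \rho(X)$, and deducing modified-nefness of $Z(\tau)$ via Demailly regularisation. One small point worth tightening: the finiteness of $\{D : \nu(\tau,D) > 0\}$ is not immediate from local finiteness of the Siu decomposition alone (the divisorial sum in a Siu decomposition can a priori be countably infinite with summable mass); in Boucksom's treatment finiteness comes \emph{after} establishing that the components form an exceptional family, since exceptional families have linearly independent classes and hence bounded cardinality. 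Your sketch has the right ingredients but the logical order of ``finite'' and ``exceptional'' should be reversed.
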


\noindent In our case of interest, where $X$ is a compact Kähler surface, the nef cone $\mathcal N_X$ and the modified nef cone $\mathcal{MN}_X$ coincide. (See, for example, \cite[Theorem 4.1]{Boucksomthesis}.) Moreover, the prime divisors $E_i$ (i.e. curves) appearing in Theorem \ref{Thm Zariski decomposition} (2) have a negative-definite intersection matrix (and this property characterises the set of \emph{exceptional families} referred to in the above Theorem \ref{Thm Zariski decomposition}), see \cite[Theorem 4.5]{Boucksomthesis}). With the following example we recall that the Zariski decomposition of a given class can be computed explicitly without knowing the entire nef cone:

{\begin{example}
    Let $X$ be the Fermat quartic hypersurface in $\mathbb P^3$ given by the locus $x_0^4 + x_1^4 + x_2^4 + x_3^4 = 0$. Let $\xi = \exp(2\pi \i/8)$, which is a primitive eighth root of unity. Then, writing 
    $$
    x_0^4 + x_1^4 + x_2^4 + x_3^4 = \left(\prod_{j\in\left\{1,3,5,7\right\}}(x_0 - \xi^j x_1)\right)+ \left(\prod_{j\in\left\{1,3,5,7\right\}}(x_2 - \xi^j x_3)\right)
    $$
    we see immediately that the sixteen lines 
    $$
    \ell(j_1,j_2):= \left\{ [\xi^{j_1}x_1:x_1: \xi^{j_2}x_3:x_3] \in \mathbb P^3 \; | \;[x_1:x_3] \in \mathbb P^1\right\} \quad j_1,j_2 \in \left\{1,3,5,7\right\}
    $$
    lie on $X$. Since $X$ is a smooth degree $4$ hypersurface, by adjunction we get that $X$ is a K3 surface, so the genus formula for an irreducible curve $C \subseteq X$ reduces to 
    $$
    2g(C) - 2 = C^2
    $$
    from which it follows that the only curves of negative self-intersection on $X$ are smooth rational curves, all of which have self-intersection equal to $-2$. Now note that $\ell(j,k)$ and $\ell(r,s)$ meet transversely in a point precisely when $j = r$ or $k=s$ but not both. So, for fixed and distinct $j,k \in \left\{1,3,5,7\right\}$, any three of the four lines $\ell(j,j), \ell(j,k),\ell(k,k),\ell(k,j)$ have an intersection matrix (with respect to an appropriate basis) given by 
    $$
    \left(\begin{matrix}
        -2 & 1 & 0 \\
        1 & -2 & 1 \\
        0 & 1 & -2
    \end{matrix}\right),
    $$
    which is negative-definite, and hence form an exceptional family. On the other hand, it is easy to verify that the divisor 
    $$
    P(j,k):= \ell(j,j)+\ell(j,k)+\ell(k,k)+\ell(k,j)
    $$
    is nef. So, it follows that any convex combination 
    $$
    \tau = a[\ell(j,j)]+b[\ell(j,k)]+c[\ell(k,k)]+d[\ell(k,j)] \in H^{1,1}(X,\RR)
    $$
    where $a, b, c, d \geq 0$ are non-negative real numbers has its Zariski decomposition given by
    $$
    Z(\tau) = \min(a,b,c,d)[P(j,k)]
    $$
    and 
    $$
    N(\tau) = \alpha-\min(a,b,c,d)[P(j,k)],
    $$
    since the latter, by the above, has irreducible components that form an exceptional family.
\end{example}

\subsection{Zariski decomposition and birational transformations} 
The Zariski decomposition is moreover well-behaved under birational transformations. More precisely, we have the following. 
\begin{lemma}\label{Lem Zariski Birational}
Suppose $X$ is a compact Kähler manifold and $\pi : Y \to X$ a birational morphism. Then, for any pseudoeffective class $\tau$ on $X$ we have 
$$
N(\tau) = \pi_*N(\pi^*\tau).
$$
In particular, if $\pi: Y \rightarrow X$ is the blowup of a compact Kähler surface $X$ at a point $p \in X$, with $E \subseteq Y$ the exceptional divisor, then for any pseudoeffecitve class $\tau$ on $X$, the support of $N(\pi^*\tau)$ is composed of the strict transform of the curves making up the support of $N(\tau)$, plus possibly the exceptional divisor $E$.
\end{lemma}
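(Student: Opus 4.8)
The plan is to reduce the statement to Boucksom's description of the negative part $N(\tau)$ in terms of \emph{generic (minimal) multiplicities} along prime divisors, since under that description the behaviour with respect to a modification is essentially built in. Recall that for a pseudoeffective class $\tau$ and a prime divisor $D$ one sets $\nu(\tau,D) = \inf_{x\in D}\nu(\tau,x)$, where $\nu(\tau,x)$ is the decreasing limit as $\varepsilon \to 0^+$ of the Lelong number at $x$ of the current of minimal singularities in the class $\tau + \varepsilon\omega$; Boucksom's divisorial Zariski decomposition is then exactly $N(\tau) = \sum_{D}\nu(\tau,D)\,D$, a finite sum (see \cite[Section~3]{Boucksomthesis}). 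So it suffices to track how the numbers $\nu(\cdot,D)$ transform under $\pi$.

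First I would record the functoriality of minimal multiplicities. The class $\pi^*\tau$ is again pseudoeffective, being represented by $\pi^*T$ for any closed positive current $T\in\tau$, and for a prime divisor $D\subseteq X$ with strict transform $\widetilde D\subseteq Y$ one has $\nu(\pi^*\tau,\widetilde D) = \nu(\tau,D)$: indeed $\pi$ is a biholomorphism over a dense Zariski-open subset of $X$ meeting $D$, so the current of minimal singularities pulls back to one with the same generic Lelong number along $\widetilde D$ as the original has along $D$. This is part of Boucksom's functoriality results; see \cite[Section~3]{Boucksomthesis}. Next I would invoke the standard dichotomy for prime divisors $\Gamma$ on $Y$: either $\Gamma$ is not $\pi$-exceptional, in which case it dominates a unique prime divisor $D\subseteq X$ and equals its strict transform $\widetilde D$, with $\pi_*\widetilde D = D$; or $\Gamma$ is $\pi$-exceptional, its image has codimension $\geq 2$, and $\pi_*\Gamma = 0$. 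Writing $N(\pi^*\tau) = \sum_{\Gamma}\nu(\pi^*\tau,\Gamma)\,\Gamma$ over prime divisors $\Gamma$ on $Y$, splitting the sum along this dichotomy, and applying $\pi_*$ then yields $\pi_*N(\pi^*\tau) = \sum_{D}\nu(\pi^*\tau,\widetilde D)\,D = \sum_{D}\nu(\tau,D)\,D = N(\tau)$, which is the first assertion.

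For the ``in particular'' part I would specialise to $\pi$ the blowup of a point $p$ on the Kähler surface $X$, so that $E$ is the unique $\pi$-exceptional prime divisor. The formula above becomes $N(\pi^*\tau) = \sum_{D}\nu(\tau,D)\,\widetilde D + \nu(\pi^*\tau,E)\,E$, whose support is exactly the set of strict transforms of the curves in the support of $N(\tau)$, together with $E$ precisely when $\nu(\pi^*\tau,E)>0$ --- hence ``plus possibly the exceptional divisor $E$''. As a self-contained cross-check in this low-dimensional setting I would instead verify directly that $\pi^*\tau = \pi^*Z(\tau) + \pi^*N(\tau)$ is the Zariski decomposition of $\pi^*\tau$: the positive part $\pi^*Z(\tau)$ is nef since $Z(\tau)$ is nef on a surface; the orthogonality relations $\pi^*Z(\tau)\cdot\widetilde D_i = Z(\tau)\cdot D_i = 0$ and $\pi^*Z(\tau)\cdot E = 0$ hold; and $\{\widetilde D_i\}\cup\{E\}$ spans the same subspace of $H^{1,1}(Y,\RR)$ as $\{\pi^*D_i\}\cup\{E\}$, on which the intersection form is negative definite by the Hodge index theorem on $X$ together with $E^2 = -1$ and $\pi^*H^{1,1}(X,\RR)\perp E$. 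Expanding $\pi^*D_i = \widetilde D_i + \mathrm{mult}_p(D_i)\,E$ then recovers the support statement, $E$ occurring iff $p$ lies on the support of $N(\tau)$.

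The main obstacle, and the only genuinely non-formal point, is the functoriality $\nu(\pi^*\tau,\widetilde D) = \nu(\tau,D)$ --- equivalently, that $\pi^*$ is compatible with the divisorial Zariski decomposition up to a $\pi$-exceptional divisor. One inequality is immediate from pulling back the current of minimal singularities; the reverse requires comparing minimal-singularity potentials on $X$ and $Y$ and using that $\pi$ is a local isomorphism near a generic point of $D$. I would cite Boucksom for this in general, and note that for the surface statement it is already subsumed by the elementary intersection-theoretic verification above.
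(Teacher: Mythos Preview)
Your proposal is correct and follows essentially the same approach as the paper. The paper's proof is a one-line citation: the identity $N(\tau) = \pi_*N(\pi^*\tau)$ is \cite[Lemma~5.8]{Boucksomthesis}, and the ``in particular'' is said to follow immediately from the pullback formula $\pi^*D_i = \widetilde D_i + \mathrm{mult}_p(D_i)E$. You are simply unpacking both of these. Your use of Boucksom's description $N(\tau) = \sum_D \nu(\tau,D)\,D$ together with the functoriality $\nu(\pi^*\tau,\widetilde D) = \nu(\tau,D)$ is exactly the content of the cited lemma, and your direct intersection-theoretic verification that $\pi^*\tau = \pi^*Z(\tau) + \pi^*N(\tau)$ is already the Zariski decomposition on the blown-up surface (checking nefness, orthogonality, and negative-definiteness of $\{\widetilde D_i\}\cup\{E\}$) is a clean self-contained realisation of the paper's appeal to the pullback formula, and in fact yields the sharper statement that $E$ appears precisely when $p$ lies on $\mathrm{Supp}\,N(\tau)$.
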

\begin{proof}
    The first part is \cite[Lemma 5.8]{Boucksomthesis}, and the second part follows immediately from the formula for the pullback of a divisor under a blowup map. 
\end{proof}

\subsection{Variational setup and stability} We will use the variational setup for the study of the J-equation introduced by \cite{CollinsGabor}, inspired by the analogous theories for K-stability and Calabi's canonical metrics (see \cite{Calabi, Mabuchi, Futaki, Tian, Donaldsontoric} and a large volume of further work). Regarding the algebro-geometric notions of J-stability we consider the definition of \cite{LejmiGabor}, but allow also the natural generalization of these notions to arbitrary compact K\"ahler manifolds, using the notion of test configuration in the sense of \cite{SD1,DervanRoss}. Following \cite{BHJ1} we denote the space of all relatively K\"ahler test configurations for $(X,[\omega])$ by $\mathcal{H}^{\mathrm{NA}}(X,[\omega])$, and often denote their elements by $\Phi$. The associated numerical invariant characterizing (uniform) J-(semi)stability with respect to given K\"ahler classes $([\theta],[\omega]) \in \mathcal{C}_X \times \mathcal{C}_X$ is denoted by $\mathrm{E}_{\theta, \omega}^{\mathrm{NA}}(\Phi)$, for $\Phi \in \mathcal{H}^{\mathrm{NA}}(X,[\omega])$ (cf. \cite[Definition 10]{LejmiGabor}). In the definition of uniform J-stability we use one of several equivalent `norms' of test configurations, privileging the so called minimum norm of \cite{DervanUKs}, which coincides with the non-archimedean $I-J$ functional, and we denote by $||.||$. This norm is related to the $\mathrm{E}_{\theta,\omega}^{\mathrm{NA}}(\Phi)$ functional via the identity
$$
||\Phi|| = \mathrm{E}_{\omega,\omega}^{\mathrm{NA}}(\Phi)
$$
taking $[\theta] = [\omega]$. For readability purposes we will occasionally suppress the double indices, writing $\mathrm{E}_{\theta}^{\mathrm{NA}}(\Phi)$ and $\mathrm{E}_{\omega}^{\mathrm{NA}}(\Phi)$ for the above quantities. \\

We further recall the special class of examples of test configurations, given by deformation to the normal cone, which we will make use of in Section \ref{Section J}: Given a curve $C \subseteq X$, there is a natural way to associate a test configuration $\Phi_{C,\kappa}$ to it via the \emph{deformation to the normal cone} construction introduced by Ross-Thomas \cite{RossThomas}. Its total space is given by $\mathcal{X} := \mathrm{Bl}_{C \times \left\{0\right\}}(X \times \mathbb{P}^1)$ and the associated relatively K\"ahler class is $\mathcal{A}_{C,\kappa} := \pi^*\alpha - \kappa[E]$, where $E$ is the exceptional divisor and $\pi: \mathcal{X} \rightarrow X$ is the map factoring through the blowdown map and the $1^{st}$ projection $p_1: X \times \mathbb{P}^1 \rightarrow X$. The test configuration $\Phi_{C,\kappa}$ is moreover said to be relatively K\"ahler precisely when $\kappa \in (0,\overline{\kappa}_C)$, where $$\overline{\kappa}_C := \sup \left\{ r > 0: \pi^*\alpha - r[E] > 0\right\}$$ is the Seshadri constant. 
The relevant non-archimedean functionals that we wish to consider then take particularly simple polynomial expressions along $\Phi_{C,\kappa}$, varying $\kappa$ in the interval $(0,\overline{\kappa}_C$), via the following well-known lemma:

\begin{lemma} \label{Lemma polynomial expansion} \emph{(\cite[Proposition 13]{LejmiGabor})}
Suppose $X$ is a smooth projective surface. For slope test configurations $\Phi_{C,\kappa} \in \mathcal{H}^{\mathrm{NA}}(X,[\omega])$ we have
$$
\mathbf{E}_{\theta,\omega}^{\mathrm{NA}}(\Phi_{C,\kappa}) = A_1\kappa^2 + B_1\kappa^3,
$$
$$
||\Phi_{C,\kappa}|| = A_2\kappa^2 + B_2\kappa^3,
$$
where $A_1,A_2,B_1,B_2$ are real numbers satisfying 
$$
A_1 = c_{\theta,\omega}\int_C \omega - \int_C \theta, \; \; B_1 = - \left(\frac{2[\theta].[\omega]}{3[\omega]^2} \right) C^2, \; \; A_2 =  \int_C \omega > 0, \; \; B_2 = -\frac{2}{3}C^2.$$
\end{lemma}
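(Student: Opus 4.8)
The plan is to reduce both identities to a single intersection-number computation on the total space of the slope test configuration $\Phi_{C,\kappa}$; this is \cite[Proposition 13]{LejmiGabor}, and I will recall the argument. First I would use that, with the conventions of \cite[Definition 10]{LejmiGabor} (cf.\ also \cite{CollinsGabor, BHJ1}), the invariant $\mathbf{E}_{\theta,\omega}^{\mathrm{NA}}(\Phi)$ attached to a relatively K\"ahler test configuration $(\mathcal X,\mathcal A)$, compactified over $\mathbb{P}^1$, is a fixed linear combination of the intersection numbers $\bar\Theta\cdot\mathcal A^{n}$ and $\mathcal A^{n+1}$ on $\mathcal X$, the coefficients depending only on $c_{\theta,\omega}$ — for $n=2$ one has $c_{\theta,\omega}=\tfrac{2\beta.\alpha}{\alpha^2}$, hence $c_{\omega,\omega}=2$ — where $\bar\Theta$ denotes the pullback of the fixed class $\beta=[\theta]$; and that $||\Phi||=\mathbf{E}_{\omega,\omega}^{\mathrm{NA}}(\Phi)$ by definition, i.e.\ the same expression evaluated at $\theta=\omega$. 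It therefore suffices to compute these two intersection numbers for $\Phi_{C,\kappa}$ and expand in $\kappa$, after which the formulas for $A_2,B_2$ follow from those for $A_1,B_1$ by setting $\theta=\omega$.

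Next I would substitute $\mathcal X=\mathrm{Bl}_{C\times\{0\}}(X\times\mathbb{P}^1)$, which is already proper over $\mathbb{P}^1$, with $\mathcal A=\pi^*\alpha-\kappa[E]$ and $\bar\Theta=\pi^*\beta$, where $\pi\colon\mathcal X\to X$ is the natural map and $E$ the exceptional divisor, and expand $\mathcal A^{3}$ and $\pi^*\beta\cdot\mathcal A^{2}$ by the binomial theorem. Every monomial containing at most one factor of $[E]$ vanishes: with no factor of $[E]$ it is $\pi^*$ of a class of degree exceeding $\dim_{\mathbb{C}}X=2$ on the surface (a power of $\alpha$, or $\beta\cdot\alpha^{2}$, etc.); with exactly one factor of $[E]$, its restriction to $E$ is pulled back from the curve $C$ and has degree exceeding $\dim_{\mathbb{C}}C=1$. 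Consequently only the $\kappa^{2}$ and $\kappa^{3}$ monomials survive, which already forces the shapes $A_1\kappa^{2}+B_1\kappa^{3}$ and $A_2\kappa^{2}+B_2\kappa^{3}$.

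To evaluate the surviving numbers I would invoke the standard intersection calculus for the blowup $b\colon\mathcal X\to X\times\mathbb{P}^1$ along the curve $C\times\{0\}$, which has codimension $2$ with normal bundle $N_{C/X}\oplus\mathcal O_C$: one has $b_*[E]^{2}=-[C\times\{0\}]$ and $b_*[E]^{3}=-c_1(N_{C/X})\cap[C]$, of degree $-C^{2}$ since $\deg N_{C/X}=C^{2}$. Combined with the projection formula for $\pi$, this gives $\pi^*\alpha\cdot[E]^{2}=-\int_C\alpha$, $\pi^*\beta\cdot[E]^{2}=-\int_C\beta$, $[E]^{3}=-C^{2}$, and $\pi^*\alpha\cdot\pi^*\beta\cdot[E]=0$. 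Substituting into the expressions of the first step and collecting powers of $\kappa$ yields $A_1=c_{\theta,\omega}\int_C\omega-\int_C\theta$ and $B_1=-\tfrac{c_{\theta,\omega}}{3}C^{2}=-\big(\tfrac{2\beta.\alpha}{3\alpha^{2}}\big)C^{2}$; specialising to $\theta=\omega$ (so $c_{\omega,\omega}=2$) gives $A_2=\int_C\omega$ and $B_2=-\tfrac{2}{3}C^{2}$. Finally $A_2=\int_C\omega>0$ because $\omega$ is K\"ahler and $C$ is an effective curve.

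I do not expect a genuine obstacle, since the statement is \cite[Proposition 13]{LejmiGabor} and the work is essentially bookkeeping. The two points requiring care are: (i) fixing the exact normalisation and overall sign of the intersection-theoretic formula for $\mathbf{E}_{\theta,\omega}^{\mathrm{NA}}$ so that the coefficients come out precisely as written (in particular the precise placement of the factor $c_{\theta,\omega}$); and (ii) orienting correctly the Segre/Grothendieck relations for the exceptional divisor of the codimension-two blowup, i.e.\ the signs in $b_*[E]^{2}=-[C\times\{0\}]$ and $b_*[E]^{3}=-c_1(N_{C/X})\cap[C]$. One may also simply defer to \cite[Proposition 13]{LejmiGabor} for this classical computation.
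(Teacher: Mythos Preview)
The paper does not give its own proof of this lemma: it is stated in the preliminaries and attributed directly to \cite[Proposition 13]{LejmiGabor}, with no further argument. Your proposal correctly recalls the standard computation from that reference --- expanding $(\pi^*\alpha-\kappa[E])^3$ and $\pi^*\beta\cdot(\pi^*\alpha-\kappa[E])^2$ on the compactified total space, killing the low-order terms by dimension reasons, and evaluating the survivors via the Segre/Grothendieck relations for the codimension-two blowup --- and then specialises $\theta=\omega$ to obtain the norm. This is exactly the intended argument, and your caveats about normalisation and sign conventions are the only genuine bookkeeping points; there is nothing to add beyond what the paper already does, namely defer to \cite[Proposition 13]{LejmiGabor}.
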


\begin{remark}
The proof of this result follows from standard intersection theory when $[\theta]$ and $[\omega]$ are rational classes. It also extends to $[\theta],[\omega] \in H^{1,1}(X,\mathbb{R})$ by continuity. In the purely transcendental setting this is related to the study of K\"ahler slope stability in \cite{Jacopo}.
\end{remark}

\noindent Finally, to fix notation and sign convention, we say that $(X,[\omega])$ is J-semistable (resp. J-stable) if 
$$
\mathrm{E}^{\mathrm{NA}}_{\theta,\omega}(\Phi) \geq 0
$$
(resp. $> 0$)
for all normal and relatively K\"ahler test configurations $\Phi$ for $(X,[\omega])$. We say that $(X,[\omega])$ is uniformly J-stable if there exists a $\delta > 0$ such that
$$
\mathrm{E}^{\mathrm{NA}}_{\theta,\omega}(\Phi) \geq \delta||\Phi||
$$
for all $\Phi \in \mathcal{H}^{\mathrm{NA}}(X,[\omega])$. We moreover say that a test configuration $\Phi_0$ is \emph{optimally destabilizing with respect to J-stability} if it is non-trivial and
\begin{equation} \label{Eq opt dest tc}
\frac{\mathrm{E}^{\mathrm{NA}}_{\theta,\omega}(\Phi_0)}{||\Phi_0||} = \inf_{\Phi \in \mathcal{H}^{\mathrm{NA}}(X,[\omega])}\frac{\mathrm{E}^{\mathrm{NA}}_{\theta,\omega}(\Phi)}{||\Phi||} \leq 0.
\end{equation}
The only difference between the above and the definition used in e.g. \cite{Gabortoric}, based on Donaldson's lower bound of the Calabi functional \cite{DonaldsonCalabi}, is the use of the minimum norm $||.||$ instead of the $L^2$ norm. 

For background on test configurations we refer to \cite{Tian, Donaldsontoric, SD1, DervanRoss, BHJ1, Zakthesis} and references therein. For variational methods and stability for the J-equation we refer to \cite{CollinsGabor}. In the case of the dHYM equation similar techniques have been developed and studied in \cite{JacobYau, CollinsJacobYau, ChuCollinsLee} and followup work. For the Z-critical equation see \cite{DMS,Dervan,McCarthy}. 

\medskip

\section{Proof of the main theorem for deformed Hermitian Yang-Mills and the Z-critical equations} \label{Section main thms}

\noindent In this section we prove the main theorems for the deformed Hermitian Yang-Mills and Z-critical equations, and map out a number of consequences of these results. 

\subsection{A meta proposition} 

\noindent As a key preparation for the proof of our main result, we make the following rather general observation: 

\begin{proposition} [Meta proposition] \label{Prop meta} 
Let $X$ be a compact Kähler surface with $\omega$ a Kähler form. Let $\tau \in \mathcal B_X \subseteq H^{1,1}(X,\mathbb R)$ be any big cohomology class that is not Kähler. Then, there exists a (non-empty) finite set of irreducible curves $\left\{E_1, \cdots, E_\ell\right\}$ such that
\begin{equation} \label{Eq 123}
\int_{E} \tau \leq 0 \iff E = E_i \textrm{ for some } i=1,\cdots, \ell.
\end{equation}
Moreover, the intersection matrix $(E_i\cdot E_j)$ of the curves $E_i$ is negative-definite and their classes $[E_i] \in H^{1,1}(X,\mathbb R)$ are linearly independent. In particular, $\ell \leq \rho(X)\leq h^{1,1}(X)$ where $\rho(X) = \dim_\mathbb R NS(X)_\mathbb R$. Moreover, if $X$ is projective, then $\ell \leq \rho(X) -1.$
\end{proposition}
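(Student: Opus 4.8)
The plan is to use the Zariski decomposition of Boucksom (Theorem~\ref{Thm Zariski decomposition}) applied to $\tau$, together with Lamari's Nakai--Moishezon criterion and the characterisation $\mathcal{N}_X = \mathcal{E}_X^*$ on surfaces. Since $\tau$ is big, write $\tau = Z(\tau) + N(\tau)$ where $Z(\tau)$ is modified nef (hence nef, as $X$ is a surface) and $N(\tau) = \sum_{i=1}^\ell a_i [E_i]$ with $a_i > 0$ and the $E_i$ forming an exceptional family, so that $(E_i \cdot E_j)$ is negative-definite and the $[E_i]$ are linearly independent; in particular $\ell \le \rho(X)$. I claim $\{E_1,\dots,E_\ell\}$ is exactly the set of irreducible curves $E$ with $\int_E \tau \le 0$. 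This set is non-empty because $\tau$ is big but \emph{not} Kähler: if $N(\tau) = 0$ then $\tau = Z(\tau)$ is nef, and a big nef class on a surface satisfies $\int_X \tau^2 > 0$ and $\int_E \tau \ge 0$ for all curves; combined with $\int_X \tau \cdot \omega > 0$ this would force $\tau$ Kähler by Lamari's theorem (condition (3) of Lamari being automatic from nefness together with, if needed, a small perturbation argument — one must check the strict inequality $\int_E \tau > 0$ holds for negative curves, using that a nef big class has positive intersection with every curve not contained in the non-Kähler locus; alternatively one argues that a nef class on a surface with $\int_E\tau=0$ for some negative curve $E$ would have $N$-part containing $E$). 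So $N(\tau)\neq 0$ and $\ell \ge 1$.

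Next I verify the two inclusions in \eqref{Eq 123}. For the forward direction "$\int_E\tau \le 0 \Rightarrow E = E_i$ for some $i$": since $Z(\tau)$ is nef we have $\int_E Z(\tau) \ge 0$, so $\int_E \tau \le 0$ forces $\int_E N(\tau) \le 0$, i.e. $\sum_i a_i (E_i \cdot E) \le 0$. If $E \notin \{E_1,\dots,E_\ell\}$ then $(E_i \cdot E) \ge 0$ for every $i$ (distinct irreducible curves meet non-negatively) and all $a_i > 0$, so $\sum_i a_i (E_i\cdot E) \ge 0$, forcing $\int_E Z(\tau) = 0$ and $(E_i \cdot E) = 0$ for all $i$; but then $\int_E\tau = 0$ and I would need to rule this out — here I invoke that $Z(\tau)$ is big (being the difference of the big class $\tau$ and the effective, hence "small in the right sense" class $N(\tau)$; more precisely $\mathcal{P}^+_X \subseteq \mathcal{B}_X$ and a nef class $Z$ with $\int_X Z^2 > 0$ lies in $\mathcal{P}^+_X$), so $\int_X Z(\tau)^2 > 0$ and by Lamari $Z(\tau)$ fails to be Kähler only because $\int_{E_i} Z(\tau) = 0$ for the curves in its own $N$-part — but $Z(\tau)$ is nef, so its Zariski decomposition is trivial, hence $Z(\tau)$ \emph{is} Kähler, and thus $\int_E Z(\tau) > 0$ for every irreducible curve, contradiction. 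For the reverse direction "$E = E_i \Rightarrow \int_{E_i}\tau \le 0$": we have $\int_{E_i} \tau = \int_{E_i} Z(\tau) + \sum_j a_j (E_j \cdot E_i)$; the self-intersection term $a_i E_i^2 < 0$ dominates because the matrix $(E_j\cdot E_i)$ is negative-definite, so $\sum_j a_j (E_j\cdot E_i) < 0$ (this is the standard fact: $N(\tau) \cdot E_i < 0$ for $N(\tau)$ supported on a negative-definite family with positive coefficients — indeed $\sum_j a_j(\sum_k a_k(E_j\cdot E_k)) = N(\tau)^2 < 0$, and one upgrades this to each individual $N(\tau)\cdot E_i < 0$ via the negative-definiteness). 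Then $Z(\tau)\cdot E_i = 0$ by the Zariski orthogonality $Z(\tau)\cdot N(\tau) = 0$ applied component-wise (each $E_i$ lies in the "$N$-locus" where the nef part has zero intersection — this is part of Boucksom's theorem, or follows since $Z(\tau)\cdot N(\tau)=0$ with $Z(\tau)$ nef and $N(\tau)$ effective forces $Z(\tau)\cdot E_i = 0$ for each $i$). Hence $\int_{E_i}\tau = Z(\tau)\cdot E_i + N(\tau)\cdot E_i = N(\tau)\cdot E_i < 0 \le 0$.

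For the projective refinement $\ell \le \rho(X) - 1$: here $\tau$ big implies $\int_X \tau^2 > 0$, so the class $\tau$ has positive self-intersection; the linear span of $[E_1],\dots,[E_\ell]$ is negative-definite by the Hodge index theorem, hence cannot contain any class of positive self-intersection, while $NS(X)_\RR$ does contain such a class (e.g. an ample class, or $\tau$ itself if $\tau \in NS(X)_\RR$ — more carefully, in the projective case one works within $NS(X)_\RR$ and the $E_i$ are algebraic, so $[E_i] \in NS(X)_\RR$, and $NS(X)_\RR$ carries a class of positive square whereas the negative-definite subspace $\mathrm{span}([E_i])$ does not, giving $\ell \le \dim NS(X)_\RR - 1 = \rho(X) - 1$).

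I expect the main obstacle to be the careful handling of the boundary/equality cases: specifically, ruling out $\int_E \tau = 0$ for curves $E$ not among the $E_i$, which hinges on showing $Z(\tau)$ is actually Kähler (not merely nef) when $\tau$ is big. This requires knowing that a nef and big class on a surface has $\int_X Z^2 > 0$ and then that its only obstruction to being Kähler via Lamari is the negative-curve condition (3), which is vacuous for a class whose Zariski decomposition has trivial $N$-part — I need to be sure the logic here is not circular and that Lamari's theorem genuinely applies with the stated hypotheses (taking $\beta$ in the closure of $\mathcal{P}^+_X$ to be $\omega$, say). A clean way to avoid circularity is: $Z(\tau)$ nef $\Rightarrow$ $Z(\tau) \in \mathcal{N}_X \subseteq \overline{\mathcal{P}^+_X}$, and $Z(\tau)$ big $\Rightarrow Z(\tau)\in\mathcal{B}_X$, and a nef big class has $Z(\tau)^2>0$; then $Z(\tau)\in\mathcal{P}^+_X\subseteq\mathcal{B}_X$, and since $Z(\tau)$ is nef with no negative curves in its (trivial) $N$-part, Lamari gives $Z(\tau)$ Kähler provided $\int_E Z(\tau)>0$ for all negative curves $E$ — but this last is what we want, so one instead argues directly: if $\int_E Z(\tau)=0$ for some negative curve $E$, then $E$ must appear in the support of $N(Z(\tau))=0$, contradiction, using the characterisation of exceptional families. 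I will need to state this carefully.
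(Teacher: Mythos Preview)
Your proposal has a genuine gap: the identification of the destabilizing set $\{E : \int_E \tau \le 0\}$ with the support of $N(\tau)$ is false in both directions, and your own closing paragraph correctly suspects the circularity without resolving it.

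For the reverse inclusion, the ``standard fact'' you invoke --- that $N(\tau)\cdot E_i < 0$ for each $i$ --- is simply not true. Negative-definiteness of $(E_i\cdot E_j)$ gives $N(\tau)^2 < 0$, but this does \emph{not} upgrade to $N(\tau)\cdot E_i < 0$ componentwise. Concretely, take two $(-2)$-curves $E_1,E_2$ with $E_1\cdot E_2 = 1$ (an $A_2$ configuration) and $N = E_1 + 3E_2$; then $N\cdot E_1 = -2 + 3 = 1 > 0$. If $Z$ is any nef class with $Z\cdot E_i = 0$ and $Z^2 > 0$, then $\tau = Z + N$ is big with Zariski decomposition $(Z,N)$, yet $\int_{E_1}\tau = 1 > 0$, so $E_1$ lies in the support of $N(\tau)$ but not in the destabilizing set.

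For the forward inclusion, your attempt to show $Z(\tau)$ is K\"ahler cannot succeed: whenever $N(\tau)\neq 0$, Zariski orthogonality gives $Z(\tau)\cdot E_i = 0$, so $Z(\tau)$ sits on the boundary of the K\"ahler cone and is never K\"ahler. Hence you cannot rule out a further curve $E$ (disjoint from the $E_i$, with $Z(\tau)\cdot E = 0$) satisfying $\int_E\tau = 0$. The same obstruction kills your non-emptiness argument when $\tau$ happens to be nef: then $N(\tau)=0$, your candidate set is empty, but since $\tau$ is big and not K\"ahler, Lamari guarantees a curve $E$ with $\int_E\tau = 0$.

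The paper's proof sidesteps all of this with a single perturbation: replace $\tau$ by $\tau - \varepsilon[\omega]$, which is still big but \emph{not nef}. Then any curve with $\int_E\tau \le 0$ satisfies $\int_E(\tau-\varepsilon[\omega]) < 0$ strictly, so lies among the prime components of $N(\tau-\varepsilon[\omega])$; one then \emph{selects} from this finite exceptional family the subset on which $\int_{E_i}\tau \le 0$, rather than claiming equality. The negative-definiteness and linear-independence claims follow because any subfamily of an exceptional family is again exceptional.
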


\begin{proof}

\noindent Let us first assume that we are in the special case whereby the big class $\tau$ is not nef. The Zariski decomposition of $\tau$ can then be written as $$\tau = Z(\tau) + N(\tau) $$ where $Z(\tau)$ is a nef class and $$N(\tau) = \sum_i^s a_i [E_i]$$ for a unique non-zero effective $\RR$-divisor $$D = \sum_i^s a_i E_i$$ (with prime components $E_j \subseteq X$ and $a_j > 0$) such that the intersection matrix $(E_i \cdot E_j)$ is negative-definite and the classes $[E_i]$ are linearly independent in $NS(X)_\mathbb R \subseteq H^{1,1}(X,\mathbb R)$. Let $E \subseteq X$ be any closed irreducible curve such that $\int_E \tau < 0$. But if $E \neq E_j$ for any $j = 1, \cdots, s$ then we have $E \cdot E_j \geq 0$ because distinct irreducible curves always have a nonnegative intersection. Therefore, we get that $$\int_E \tau = \int_E Z(\tau) + \sum_{i = 1}^s a_i E\cdot E_i \geq \int_E Z(\tau) \geq 0 $$  which contradicts our assumption on $E$. So we must have that $E = E_j$ for some $j = 1, \cdots, s$, and since the classes $[E_i]$ are linearly independent in $NS(X)_\mathbb R \subseteq H^{1,1}(X,\mathbb R)$, we also have $s \leq \rho(X)$. Moreover, if $X$ is projective, then $NS(X)_\RR$ contains at least one positive eigenvector of the intersection form, so in that case $\ell \leq \rho(X)-1$. To summarise, we have proven that if $\tau$ is not nef, then there exist at most $\rho(X)$ (and in case $X$ is projective, at most $\rho(X) -1$) distinct curves $E\subseteq X$ such that $\int_E \tau < 0$, all of which occur as prime components of $N(\tau)$. 

Now suppose, in full generality of the proposition, that $\tau$ is not Kähler. Then, $\tau - \varepsilon[\omega]$ is not nef but still contained in the big cone $\mathcal B_X$ for any $\varepsilon > 0$ small enough. Now if $\int_E \tau \leq 0$ for some curve $E \subseteq X$, we  have $\int_E (\tau - \varepsilon[\omega]) < 0$, so the set of such curves is contained among the prime components of $N(\tau - \varepsilon[\omega])$, say $E_1, \cdots, E_s$. After relabelling if necessary, we may suppose $\int_{E_j} \tau \leq 0$ precisely for $j = 1, \cdots, \ell$. Then clearly we have $\ell \leq s \leq \rho(X)$ (and if $X$ is projective, $s \leq \ell \leq \rho(X)-1$). Finally, the claim about the intersection matrix follows because any submatrix of a negative-definite matrix is itself negative-definite.
\end{proof}

\medskip

\subsection{Global finiteness of $\mathcal{S}_{\tau}$  in compact sets}

\noindent For the sequel we introduce the following terminology:

\begin{definition}
\label{def sets of destabilizers}
Let $X$ be a compact Kähler surface with $\omega$ a Kähler form on $X$. Let $\tau \in \mathcal B_X$. 
\begin{enumerate}
    \item The \emph{set of destabilizing curves with respect to $\tau$} is the finite set $$\mathcal D_\tau := \left\{ E \subseteq X \textrm{ irreducible curve } | \ \int_E \tau \leq 0 \right\}. $$
    \item A set $\mathcal S$ of irreducible curves is a set of \emph{candidate destabilizers} for $\tau$ if $\mathcal D_\tau \subseteq \mathcal S$. 
    \item The set of \emph{Zariski negative curves for $\tau$} is the finite set $$ \mathrm{Neg}(\tau) := \bigcap_{\varepsilon > 0} \left\{E \subseteq X \textrm{ irreducible curve } | \  E \textrm{ is a prime component of } N(\tau - \varepsilon[\omega])\right\}.$$
\end{enumerate} 
\end{definition}

\begin{remark}
\begin{enumerate}\label{Rmk Discussion Meta prop}
    \item {Note that $\operatorname{Neg}(\tau)$ does not depend on the Kähler class $[\omega]$ we choose.}
    \item It is clear from the proof of the Proposition \ref{Prop meta} that $\mathcal D_{\tau} \subseteq  \mathrm{Neg}(\tau)$, and hence $\mathcal S_\tau := \mathrm{Neg}(\tau)$ always provides a natural set of candidate destabilizers for $\tau$. 
    \item For any big class $\tau$, the closed positive current $N(\tau)$ appearing in the Zariski decomposition is the divisor part in the Siu decomposition of a current of minimal singularities (see \cite{Boucksomthesis}). 
    \item 
    In previous literature, a slightly weaker analogue of Proposition \ref{Prop meta} appeared in \cite[Proposition 4.5]{SongWeinkove}, which uses the Siu decomposition of closed positive (1,1)  currents and a regularisation result due to Demailly (see the proof of \cite[Proposition 4.5]{SongWeinkove} as well as \cite{Lamari} and references therein). The Zariski decomposition has the added advantage that the negative part 
    is precisely given by the Siu decomposition of a current of \emph{minimal singularities} (see \cite[Section 3.2]{Boucksomthesis} for reference). 
\end{enumerate}
\end{remark}

\noindent To obtain a finite number of test conditions for the various geometric PDE that can be used also under small perturbations of the underlying cohomological data, it is essential to ask how the sets $\mathrm{Neg}(\tau)$ change as $\tau$ varies across $\mathcal P^+_X$. In particular, if a suitable subset $K \subseteq \mathcal P^+_X$ is specified, one hopes to obtain a `global set of test curves' $\mathcal V_K$ against which we can test any given class $\tau \in K$. 
When $K$ is a compact subset of $\mathcal P^+_X$, the following result produces a finite set of curves that is sufficient uniformly across all of $K$, and moreover gives an explicit upper bound on its cardinality:

\begin{lemma} \label{Lem cardinality bound cvx hull}
Let $X$ and $\omega$ be as in Proposition \ref{Prop meta}, and let $K \subseteq \mathcal B_X$ be any subset contained in the positive cone over the strict convex hull of finitely many pseudoeffective classes $\tau_1,\dots,\tau_k$ on $X$. Then $$ \mathcal S_K := \bigcup_{\tau \in K} \mathcal D_{\tau}$$ is a finite set of curves of negative self-intersection, of cardinality $|\mathcal{S}_K| \leq k\rho(X)$, and is a set of candidate destabilizers for every $\tau \in K$.
If $X$ is projective, then the cardinality of $\mathcal S_K$ does not exceed $k\rho(X) - k$.
\end{lemma}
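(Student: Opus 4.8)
===

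The plan is to reduce the statement to a finite union of applications of Proposition~\ref{Prop meta}, by exploiting the convexity of the negative part map $N$ from Boucksom's Zariski decomposition (Theorem~\ref{Thm Zariski decomposition}(4)). First I would reduce to the case where $K$ is exactly the strict convex hull $\Delta := \{\sum_j t_j \tau_j : t_j > 0, \sum_j t_j = 1\}$ of the $\tau_j$'s, together with the positive cone over it: since scaling a class by a positive real does not change the sign of any $\int_E \tau$, and since $\mathcal D_\tau$ only depends on $\tau$ up to positive rescaling, it suffices to produce a finite candidate set for all $\tau \in \Delta$. We may also assume each $\tau_j$ is itself big (perturbing $\tau_j \mapsto \tau_j + \varepsilon[\omega]$ if necessary does not shrink $\Delta$'s image badly; more precisely one works with $\tau + \varepsilon[\omega]$ for small $\varepsilon$ as in the proof of Proposition~\ref{Prop meta}).

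The key step is the following claim: for any $\tau = \sum_j t_j \tau_j \in \Delta$ with all $t_j > 0$, every prime component of the negative part $N(\tau)$ is a prime component of $N(\tau_j)$ for \emph{some} $j$. Indeed, convexity of $N$ gives $N(\tau) \leq \sum_j t_j N(\tau_j)$ in the sense that $\sum_j t_j N(\tau_j) - N(\tau)$ is pseudoeffective; but both sides are classes of effective $\mathbb{R}$-divisors, and I would argue (using that a pseudoeffective class which is a difference of effective divisors has support controlled by the positive summand — e.g. via the fact that $N(\tau)$ is the divisorial part of a current of minimal singularities, Remark~\ref{Rmk Discussion Meta prop}(2), or more elementarily by intersecting against Kähler classes and using negative-definiteness) that $\mathrm{Supp}(N(\tau)) \subseteq \bigcup_j \mathrm{Supp}(N(\tau_j))$. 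Hence, setting
$$
\mathcal S_K := \bigcup_{j=1}^k \mathrm{Neg}(\tau_j) = \bigcup_{j=1}^k \bigcap_{\varepsilon > 0}\{\text{prime components of } N(\tau_j - \varepsilon[\omega])\},
$$
we get that $\mathcal S_K$ is a set of candidate destabilisers for every $\tau \in K$, by the same Zariski-decomposition argument as in Proposition~\ref{Prop meta}: if $\int_E \tau \leq 0$ then $\int_E(\tau - \varepsilon[\omega]) < 0$ forces $E$ to be a prime component of $N(\tau - \varepsilon[\omega])$, which by the claim lies in some $\mathrm{Neg}(\tau_j)$.

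The cardinality bound is then immediate: by Proposition~\ref{Prop meta} (or directly Theorem~\ref{Thm Zariski decomposition}(2)), each $\mathrm{Neg}(\tau_j)$ consists of at most $\rho(X)$ curves (at most $\rho(X) - 1$ when $X$ is projective, since $NS(X)_\mathbb{R}$ then carries a positive eigenvector of the intersection form which cannot lie in the negative-definite span of an exceptional family), so $|\mathcal S_K| \leq k\rho(X)$ (resp. $\leq k\rho(X) - k$). Negative self-intersection of each curve in $\mathcal S_K$ follows since it is a prime component of some exceptional family, whose intersection matrix is negative-definite, so in particular its diagonal entries are negative.

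The main obstacle I anticipate is making the support-containment claim $\mathrm{Supp}(N(\tau)) \subseteq \bigcup_j \mathrm{Supp}(N(\tau_j))$ fully rigorous: convexity of $N$ only gives an inequality of classes, not of divisors, so one must pass from "$\sum_j t_j N(\tau_j) - N(\tau) \in \mathcal E_X$" to an actual containment of supports. The cleanest route is probably to invoke the characterisation of $N(\tau)$ as the divisorial (Siu) part of a current of minimal singularities in the class $\tau$, together with monotonicity of minimal singularities under the partial order, rather than trying to argue purely at the level of divisor classes; alternatively, one can use that for a prime divisor $E$ not among the components of any $N(\tau_j)$, the class $Z(\tau_j)$ is nef so $\int_E Z(\tau_j) \geq 0$, and then $\int_E \tau = \sum_j t_j \int_E \tau_j \geq \sum_j t_j \int_E N(\tau_j) \geq 0$ directly, which sidesteps the support claim entirely and is likely the argument actually intended.
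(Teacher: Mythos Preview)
Your proposal is correct, and in its closing paragraph you have in fact stumbled onto the paper's argument — but the paper's proof is even more elementary than either of your routes. The paper does not touch the Zariski decomposition at all in the reduction step: it simply observes that if $\tau = \sum_j a_j \tau_j$ with all $a_j > 0$ and $\int_C \tau \leq 0$, then by linearity $\int_C \tau_j \leq 0$ for at least one $j$, so $C \in \mathcal D_{\tau_j} \subseteq \bigcup_j \mathcal D_{\tau_j}$. Proposition~\ref{Prop meta} (applied to each $\tau_j$) then bounds each $|\mathcal D_{\tau_j}|$ by $\rho(X)$ (resp.\ $\rho(X)-1$), and the union bound finishes. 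No convexity of $N$, no support-containment claim, no $\mathrm{Neg}(\tau_j)$ are needed.

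Your main route via convexity of $N$ is not wrong, but it manufactures exactly the obstacle you worried about (passing from a class inequality to a support containment) and then has to work around it. Your alternative at the end — arguing via $Z(\tau_j)$ nef and $\int_E N(\tau_j) \geq 0$ for $E$ outside the support — is a correct way to sidestep that obstacle, but it is still doing more than necessary: once you write $\int_E \tau = \sum_j t_j \int_E \tau_j$, the pigeonhole on signs is immediate and you never need to decompose $\tau_j = Z(\tau_j) + N(\tau_j)$. In short, the Zariski decomposition has already done all its work inside Proposition~\ref{Prop meta}; the present lemma is pure linear algebra on top of that. One small caution common to both approaches: Proposition~\ref{Prop meta} is stated for big classes, so bounding $|\mathcal D_{\tau_j}|$ when $\tau_j$ is merely pseudoeffective tacitly uses that in every application (e.g.\ Lemma~\ref{Lemma convex hull}) the vertices $\tau_j$ are chosen in $\mathcal P_X^+ \subseteq \mathcal B_X$.
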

\begin{proof}
Suppose that 
$
\tau = \sum_{i = 1}^k a_i\tau_i, \; \; a_i > 0.
$
Then $\int_C \tau \leq 0$ implies $\int_C \tau_i\leq 0$ for some $i$. Hence $C$ is in the set of destabilizers
$$
\mathcal{D}_{\tau_i} \subseteq \bigcup_i \mathcal{D}_{\tau_i} 
$$
which is of cardinality at most $k \max_i|\mathcal{D}_{\tau_i}| \leq k\rho(X)$ (or $\leq k\rho(X) - k$ if $X$ is projective), by Proposition \ref{Prop meta}. 
\end{proof}

\noindent For future use we record the elementary fact that the above in particular applies if $K$ is a compact subset of $\mathcal{P}_X^+$: 

\begin{lemma} \label{Lemma convex hull}
Any compact subset $K \subseteq \mathcal{P}_X^+$ is contained in the convex hull of finitely many points in $\mathcal{P}_X^+$.
\end{lemma}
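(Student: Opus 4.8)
The plan is to reduce the statement to the elementary fact that a compact subset of a \emph{proper} open convex subset of a finite-dimensional vector space is always contained in a polytope whose vertices lie in that convex set. The only structural input needed is that $\mathcal{P}_X^+$ is open, convex, and does not contain the origin, so it is a proper open convex subset of the finite-dimensional space $H^{1,1}(X,\RR)$.

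First I would record the relevant topology of $\mathcal{P}_X^+$. It is convex, being the nappe of the Lorentzian cone $\{\tau\in H^{1,1}(X,\RR):\int_X\tau^2>0\}$ singled out by the additional linear inequality $\int_X\tau\cdot\alpha>0$ for a fixed $\alpha\in\mathcal C_X$; it is open, since $\tau\mapsto\int_X\tau^2$ and $\tau\mapsto\int_X\tau\cdot\alpha$ are continuous; and its complement $C:=H^{1,1}(X,\RR)\setminus\mathcal{P}_X^+$ is closed and nonempty, as $0\in C$. Since $K$ is compact and disjoint from the closed set $C$, the distance $\delta:=\operatorname{dist}(K,C)$ is strictly positive, and $K$ is bounded, say $K\subseteq \overline{B}(0,R)$.

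Next I would fix a cubical grid on $H^{1,1}(X,\RR)\cong\RR^{h^{1,1}(X)}$ of mesh size small enough that each closed grid cube has diameter less than $\delta$. Only finitely many grid cubes meet the bounded set $\overline{B}(0,R)$; let $Q_1,\dots,Q_M$ be those among them that intersect $K$. If $Q_j$ meets $K$ at a point $y$, then every $q\in Q_j$ satisfies $\operatorname{dist}(q,K)\le\|q-y\|<\delta$, hence $q\notin C$ (otherwise $\operatorname{dist}(q,K)\ge\operatorname{dist}(C,K)=\delta$), so $Q_j\subseteq\mathcal{P}_X^+$. In particular all the finitely many vertices $p_1,\dots,p_k$ of $Q_1,\dots,Q_M$ lie in $\mathcal{P}_X^+$.

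Finally I would conclude: the grid covers $\overline{B}(0,R)\supseteq K$, and any grid cube containing a point of $K$ is one of the $Q_j$, so $K\subseteq\bigcup_{j=1}^M Q_j$; since each cube $Q_j$ is the convex hull of its own vertices, $Q_j\subseteq\operatorname{conv}\{p_1,\dots,p_k\}$, and therefore $K\subseteq\operatorname{conv}\{p_1,\dots,p_k\}$ with all $p_i\in\mathcal{P}_X^+$, which is exactly the assertion (note that, because $\mathcal{P}_X^+$ is convex, one does not even need the polytope to sit inside $\mathcal{P}_X^+$ — only its vertices). The argument is purely point-set topological, so there is no real obstacle; the one step that must not be overlooked is verifying that $\mathcal{P}_X^+$ is a genuinely proper open convex set — which holds simply because $0\notin\mathcal{P}_X^+$ — so that $\delta>0$.
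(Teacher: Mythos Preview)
Your proof is correct and follows essentially the same approach as the paper: cover $K$ by finitely many small cubes whose closures lie in $\mathcal{P}_X^+$, then take the vertices of those cubes. The paper phrases this as extracting a finite subcover from an open cover by cubes (and makes an unnecessary reduction to connected $K$), whereas you use a uniform grid and the positive distance $\delta=\operatorname{dist}(K,H^{1,1}(X,\RR)\setminus\mathcal{P}_X^+)$; these are minor stylistic variants of the same elementary argument.
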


\begin{proof}
Without loss of generality, assume that $K$ is connected. We may then choose an open covering of $K$ by means of open cubes $C_{\nu}$ whose closure is in $\mathcal{P}_X^+$. By compactness there is a finite open subcover 
$$
\bigcup_{\nu \in I, |I| < +\infty} C_{\nu}.
$$
Taking the convex hull of enough of the (finitely many) vertices of $C_{\nu}$, $\nu \in I$, this set clearly contains $K$. 
\end{proof}

\begin{remark} \label{Cor Nakai Moishezon} It is moreover worth noting that the above result has an interesting interpretation already in the case of the Nakai-Moishezon criterion, which is used to test if a given class $\alpha \in H^{1,1}(X,\mathbb{R})$ is a K\"ahler class (underlining that this is a finite problem). Indeed, let $(X,\omega)$ be a compact K\"ahler surface and $K \subseteq \mathcal{P}_X^+$ a compact set. Then there is a finite number of curves $E_1,\cdots, E_\ell$ on $X$ \emph{(depending only on $K$)} such that, for any $\alpha \in K$, the following are equivalent:
\begin{enumerate}
    \item The class $\alpha$ is K\"ahler.
    \item We have
    $$
    \int_{E_i} \alpha > 0
    $$
    for all $i \in \left\{1,\dots,\ell\right\}$. 
\end{enumerate}
As before the curves $E_i$ have negative self-intersection, and if $K$ is contained in the convex hull of $k$ pseudoeffective classes, then $\ell \leq k\rho(X)$ (and $\ell \leq k\rho(X) - k$ if $X$ is projective).
\end{remark}

\medskip

\subsection{Proof of the Main Theorem for the deformed Hermitian-Yang-Mills equation}
We now turn to the proof of Theorem \ref{Thm main dHYM intro}. To fix notation, let us suppose we are given a pair of Kähler classes $\beta, \alpha\in\mathcal C_X$ and a Kähler form $\theta$ in $\beta$. Then, the topological phase $\hat\Theta(\beta,\alpha)$ defined (modulo $2\pi$) by 
$$
\int_X\operatorname{Im}\left(e^{-\i\hat\Theta(\beta,\alpha)}(\beta+\i\alpha)^2\right)= 0
$$
satisfies 
$$
\cot(\hat\Theta(\beta,\alpha)) = \frac{\int_X(\beta^2-\alpha^2)}{2\int_X \alpha\cdot\beta}.
$$
Indeed, this follows straightforwardly from the readily apparent identity 
$$
\operatorname{Im}\left(\left(\cos(\hat\Theta(\beta,\alpha))-\i\sin(\hat\Theta(\beta,\alpha))\right)\left(\int_X(\beta^2 - \alpha^2) + \i\int_X 2\alpha\cdot\beta\right)\right)=0.
$$
Using this, we see that the deformed Hermitian Yang-Mills equation 
$$
\operatorname{Im}\left(e^{-\i\hat\Theta(\beta,\alpha)}(\theta+\i\omega)^2\right)=0,
$$
is equivalent to solving the equation 
$$
2\cos(\hat\Theta(\beta,\alpha))\theta\wedge\omega - \sin(\hat\Theta(\beta,\alpha))(\theta^2 - \omega^2) = 0
$$
for a smooth (1,1) form $\omega \in \alpha$. Upon re-arranging and completing the square in $\omega$, we obtain
$$
(\omega+ \cot(\hat\Theta(\beta,\alpha))\theta)^2 = \left(1 + \cot(\hat\Theta(\beta,\alpha))^2\right)\theta^2.
$$
which is a complex Monge-Ampère equation for the class $\alpha+\cot(\hat\Theta(\beta,\alpha))\beta$. Thus, in this subsection, we shall write 
$$
\tau(\beta,\alpha):= \alpha + \cot(\hat\Theta(\beta,\alpha))\beta.
$$
In this notation, Theorem \ref{Thm main dHYM intro} can be restated as follows.
\begin{theorem}\label{Thm Main dHYM Proof} Let $X$ be a compact Kähler surface and let $K\subseteq \mathcal C_X \times \mathcal C_X$ be a compact subset. Then, there exists a non-negative integer $\ell \geq 0$ and curves of negative self-intersection $E_1,\cdots, E_\ell$ on $X$ (depending only on $K$) such that for all $(\beta,\alpha)\in K$ the following are equivalent. 
\begin{enumerate}
    \item For any choice of Kähler metric $\theta$ in $\beta$, there exists a smooth (1,1) form $\omega$ which is a solution to the deformed Hermitian Yang-Mills equation 
    $$
    \operatorname{Im}\left(e^{-\i\hat\Theta(\beta,\alpha)}(\theta+\i\omega)^2\right)=0,
    $$
    and the solution $\omega$ is a Kähler form in $\alpha$ if $\int_X(\alpha^2 -\beta^2)>0.$
    \item For every curve $E\subseteq X$, we have 
    $$
    \int_{E}\tau(\beta,\alpha) >0.
    $$
    \item For $i = 1,\cdots, \ell$, we have 
    $$
    \int_{E_i}\tau(\beta,\alpha) >0.
    $$
\end{enumerate}
    
\end{theorem}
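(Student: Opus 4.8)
The plan is to reduce the deformed Hermitian Yang-Mills equation to a complex Monge--Amp\`ere equation --- which the preamble has already essentially done --- and then invoke the finiteness results of the preceding subsections (Proposition \ref{Prop meta} and Lemma \ref{Lem cardinality bound cvx hull}). More precisely, the implication $(1)\Leftrightarrow(2)$ should follow from the Yau-type solvability criterion: after completing the square, solving the dHYM equation in $\alpha$ for a given K\"ahler metric $\theta\in\beta$ amounts to solving
$$
(\omega+\cot(\hat\Theta(\beta,\alpha))\theta)^2 = \bigl(1+\cot^2(\hat\Theta(\beta,\alpha))\bigr)\theta^2
$$
for $\omega\in\alpha$. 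By Yau's theorem (and its refinements for degenerate right-hand sides, see the existence criteria cited in the introduction), this Monge--Amp\`ere equation is solvable precisely when the class $\tau(\beta,\alpha)=\alpha+\cot(\hat\Theta(\beta,\alpha))\beta$ is K\"ahler when $\int_X(\alpha^2-\beta^2)>0$, and more generally when $\tau(\beta,\alpha)$ lies in the appropriate positivity cone; the Nakai--Moishezon-type criterion of Lamari quoted in Section \ref{Section prelim} then translates K\"ahlerness of $\tau(\beta,\alpha)$ into the numerical condition $\int_E\tau(\beta,\alpha)>0$ for all irreducible curves $E$, given that $\tau(\beta,\alpha)$ already has positive self-intersection and positive pairing with a fixed K\"ahler class. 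This last point requires a short computation showing that $\int_X\tau(\beta,\alpha)^2>0$ and $\int_X\tau(\beta,\alpha)\cdot\beta>0$ automatically hold under the standing hypotheses, so that condition (3) of Lamari's theorem is the only remaining obstruction.

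Next, for $(2)\Leftrightarrow(3)$ --- the genuinely new content --- the key observation is that $\tau(\beta,\alpha)$ depends continuously on $(\beta,\alpha)$, so the image $\tau(K)$ of the compact set $K\subseteq\mathcal C_X\times\mathcal C_X$ is a compact subset of $H^{1,1}(X,\RR)$. First I would check that $\tau(K)$ actually lands in the positive cone $\mathcal P^+_X$ (or at least that, after a harmless small perturbation argument as in the proof of Proposition \ref{Prop meta}, it lands in the big cone $\mathcal B_X$), again using the sign computations above. Then by Lemma \ref{Lemma convex hull}, $\tau(K)$ is contained in the convex hull of finitely many pseudoeffective classes, so Lemma \ref{Lem cardinality bound cvx hull} applies: the union $\mathcal S_{\tau(K)}=\bigcup_{\tau\in\tau(K)}\mathcal D_\tau$ is a finite set of curves of negative self-intersection, of cardinality at most $k\rho(X)$ (respectively $k\rho(X)-k$ if $X$ is projective), and it is a set of candidate destabilisers for every $\tau\in\tau(K)$. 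Setting $\{E_1,\dots,E_\ell\}:=\mathcal S_{\tau(K)}$, condition (2) for $\tau(\beta,\alpha)$ is equivalent to $\int_E\tau(\beta,\alpha)>0$ for all $E$ in this finite set --- because any curve $E$ with $\int_E\tau(\beta,\alpha)\le 0$ necessarily belongs to $\mathcal D_{\tau(\beta,\alpha)}\subseteq\mathcal S_{\tau(K)}$ --- which is exactly condition (3).

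I expect the main obstacle, and the step requiring the most care, to be the case analysis in the degenerate regime $\int_X(\alpha^2-\beta^2)\le 0$, i.e.\ where $\cot(\hat\Theta(\beta,\alpha))\ge 0$ and $\tau(\beta,\alpha)$ may fail to be K\"ahler even when the dHYM equation is solvable (the solution $\omega$ then need not be positive). One must verify that the Monge--Amp\`ere solvability criterion one is invoking is the right one: it is the solvability of the complex Monge--Amp\`ere equation $(\omega+\cot(\hat\Theta)\theta)^2 = c\,\theta^2$ for a \emph{closed} $(1,1)$ form $\omega\in\alpha$ (not necessarily positive), which by results such as those of Chu--Lee--Takahashi and Collins--Sz\'ekelyhidi holds iff $\tau(\beta,\alpha)$ is a K\"ahler class. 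A second, more technical point is ensuring the uniformity of the curve set across the \emph{rescalings} $\alpha\mapsto k\alpha$ alluded to in the remark following Theorem \ref{Thm main dHYM intro}: here one notes that $\tau(\beta,k\alpha)$ and $\tau(\beta,\alpha)$ differ by a positive scalar multiple as $k\to\infty$ in the large-volume limit, so that $\mathcal D_{\tau(\beta,k\alpha)}$ stabilises and is controlled by the same finite set; but for the statement of Theorem \ref{Thm Main dHYM Proof} as written, with $K$ a fixed compact set, this refinement is not needed and the argument above suffices.
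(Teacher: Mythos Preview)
Your proposal is correct and follows essentially the same approach as the paper: verify $\tau(\beta,\alpha)\in\mathcal P^+_X$ via the two sign computations, push $K$ forward to a compact subset of $\mathcal P^+_X$, apply Lemmas \ref{Lemma convex hull} and \ref{Lem cardinality bound cvx hull} to extract the finite curve set, and use Yau's theorem for the Monge--Amp\`ere reduction.

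Your worry about the regime $\int_X(\alpha^2-\beta^2)\le 0$ is unfounded, and the paper's argument dispatches it without case analysis. The point is that if a smooth $\omega\in\alpha$ solves the dHYM equation, then the form $\chi:=\omega+\cot(\hat\Theta)\theta$ satisfies $\chi^2=(1+\cot^2\hat\Theta)\theta^2$, which is a genuine volume form; on a surface this forces $\chi$ to be nondegenerate of the same orientation as $\theta$, hence definite, and since $[\chi]=\tau(\beta,\alpha)\in\mathcal P^+_X$ it must be positive definite. Thus $\tau(\beta,\alpha)$ is always K\"ahler when dHYM is solvable, regardless of the sign of $\int_X(\alpha^2-\beta^2)$; that sign only governs whether the solution $\omega$ itself is K\"ahler. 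No appeal to Chu--Lee--Takahashi or degenerate Monge--Amp\`ere theory is needed---just Yau's theorem in the K\"ahler class $\tau(\beta,\alpha)$ for $(3)\Rightarrow(1)$, and the elementary definiteness observation for $(1)\Rightarrow(2)$.
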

\begin{proof}
    From the discussion preceding the statement of the Theorem, we see that 
    $$
    \int_X\tau(\beta,\alpha)^2 = (1+\cot^2(\hat\Theta(\beta,\alpha)))\int_X \beta^2 > 0.
    $$
    Moreover, observe that
    $$
    \int_X \tau(\beta,\alpha)\cdot\alpha = \frac{1}{2}\int_X(\beta^2 + \alpha^2) > 0.
    $$
    and so 
    $$
    \tau(\beta,\alpha) \in \mathcal P^+_X.
    $$
    Thus, the continuous map 
    $$\mathcal C_X \times \mathcal C_X \to H^{1,1}(X,\RR), \; \; (\beta,\alpha)\mapsto \tau(\beta,\alpha)
    $$
    takes $K$ onto a compact subset $\tilde K$ of $\mathcal P^+_X$. Now, by Lemma \ref{Lemma convex hull}, $\tilde K$ is contained in the convex hull of finitely many points of $\mathcal P^+_X$ and by Lemma \ref{Lem cardinality bound cvx hull}, there exists a non-negative integer $\ell \geq 0$ and finitely many curves of negative self-intersection $E_1,\cdots,E_\ell$ such that a class $\tau \in \tilde K$ is Kähler if and only 
    $$
    \int_{E_i} \tau > 0
    $$
    for $i= 1, \cdots, \ell.$ The proof that these curves satisfy the conclusion of the Theorem now proceeds via a standard argument whereby we reduce the equation to a complex Monge-Ampère equation. For the sake of completeness, we recall this simple argument.\\
    
    \noindent Suppose $(\beta,\alpha)\in K$ is such that the deformed Hermitian Yang-Mills equation admits a smooth solution $\omega$ for a choice of Kähler form $\theta \in \beta$. Then, from the above discussion, we see that 
    $$
    (\omega+\cot(\hat\Theta(\beta,\alpha)\theta)^2 = (1+\cot^2(\hat\Theta(\beta,\alpha)))\theta^2. 
    $$
    From this equality of forms, it follows that the (1,1) form $(\omega + \cot(\hat\Theta(\beta,\alpha)))$ is Kähler. Indeed, the equality obviously implies that $\omega + \cot(\hat\Theta(\beta,\alpha))\theta$ is non-degenerate and defines the same orientation as the Kähler form $\theta$. On a surface, this condition is equivalent to definiteness. But the topological fact that the class $\tau(\beta,\alpha)$ is in $\mathcal P^+_X$ ensures that the form $\omega + \cot(\hat\Theta(\beta,\alpha)$ is positive-definite. This shows that the class $\tau(\beta,\alpha)$ is a Kähler class, and proves that (1) implies (2). 
    
    It is obvious that (2) implies (3). 
    Finally, let there be given any choice of Kähler metric $\theta \in \beta$. Then (3) implies, by our choice of the curves $E_i$, that the class $\tau(\beta,\alpha)$ is a Kähler class. By Yau's solution of the Calabi conjecture \cite{Yau}, this implies that we can find a Kähler metric $\chi \in \tau(\beta,\alpha)=\alpha + \cot(\hat\Theta(\beta,\alpha))\beta$ such that 
    $$
    \chi^2 = (1  + \cot^2(\hat\Theta(\beta,\alpha)))\theta^2.
    $$
    But then $\omega = \chi - \cot(\hat\Theta(\beta,\alpha))\theta$ satisfies the deformed Hermitian Yang-Mills equation. This shows that (3) implies (1). Finally, note that if $\int_X(\alpha^2-\beta^2)>0$ then clearly $\cot(\hat\Theta(\beta,\alpha))<0$ and so the form $-\cot(\hat\Theta(\beta,\alpha))\theta$ is a Kähler class, and therefore, so is $\omega = \chi -\cot(\hat\Theta(\beta,\alpha))\theta$.
    
\end{proof}

\begin{remark}\label{Rmk dHYM phase}
The hypothesis $\int_X (\alpha^2-\beta^2) > 0$ is usually formulated as the phase hypothesis $\hat\Theta(\beta,\alpha)\in (\frac{\pi}{2},\pi)$, and is called the \emph{supercritical regime}.
\end{remark}

\begin{corollary}
    Suppose $X$ is a compact Kähler surface that admits no curves of negative self-intersection. Then, the deformed Hermitian Yang-Mills equation always admits a solution for any pair of Kähler classes $\beta,\alpha$ and any choice of Kähler metric $\theta \in \beta$.
\end{corollary}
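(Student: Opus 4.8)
The plan is to deduce this directly from Theorem \ref{Thm Main dHYM Proof} (equivalently Theorem \ref{Thm main dHYM intro}), so that essentially no new work is required. First I would fix the pair of Kähler classes $(\beta,\alpha)$ together with a Kähler form $\theta \in \beta$, and apply Theorem \ref{Thm Main dHYM Proof} to some compact set $K \subseteq \mathcal C_X \times \mathcal C_X$ containing $(\beta,\alpha)$ (for instance a small closed ball around it). This produces a finite list of curves $E_1,\dots,E_\ell$ of negative self-intersection such that solvability of the dHYM equation with respect to $(\beta,\alpha)$ is equivalent to the finitely many numerical conditions $\int_{E_i}\tau(\beta,\alpha) > 0$.

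The key observation is that by hypothesis $X$ carries no irreducible curve of negative self-intersection whatsoever, so the list $\{E_1,\dots,E_\ell\}$ must be empty, i.e.\ $\ell = 0$. Consequently condition (3) of Theorem \ref{Thm Main dHYM Proof} is an empty (vacuously satisfied) family of inequalities, hence automatically holds, and the implication $(3) \Rightarrow (1)$ then yields a solution $\omega$ to the dHYM equation for the given $\theta$. Since $\tau(\beta,\alpha)$ depends only on the cohomology classes and not on the chosen representative, the conclusion is uniform over all Kähler forms $\theta$ in $\beta$.

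Alternatively --- and this is really the same argument unwound --- one can bypass the compact-set machinery and argue straight from the Meta Proposition \ref{Prop meta}. As computed in the proof of Theorem \ref{Thm Main dHYM Proof}, the class $\tau(\beta,\alpha) = \alpha + \cot(\hat\Theta(\beta,\alpha))\beta$ lies in the positive cone $\mathcal P^+_X$, hence in the big cone $\mathcal B_X$ by Lemma \ref{Lemma positive inside big}. If $\tau(\beta,\alpha)$ were not Kähler, Proposition \ref{Prop meta} would furnish at least one irreducible curve $E$ with $E^2 < 0$ and $\int_E \tau(\beta,\alpha) \le 0$, contradicting the hypothesis. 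Therefore $\tau(\beta,\alpha)$ is a Kähler class, and Yau's theorem \cite{Yau} produces a Kähler metric $\chi \in \tau(\beta,\alpha)$ with $\chi^2 = (1 + \cot^2(\hat\Theta(\beta,\alpha)))\theta^2$; then $\omega = \chi - \cot(\hat\Theta(\beta,\alpha))\theta$ solves the dHYM equation, exactly as in the final paragraph of the proof of Theorem \ref{Thm Main dHYM Proof}.

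There is no serious obstacle here: the entire content is that ``no negative curves'' forces the (always finite) obstruction set to be empty. The only thing to be mildly careful about is the logical bookkeeping --- making sure one invokes the direction $(3) \Rightarrow (1)$ of the theorem, which is where Yau's solution of the Calabi conjecture actually enters.
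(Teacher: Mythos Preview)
Your proposal is correct and follows essentially the same approach as the paper: apply Theorem \ref{Thm Main dHYM Proof} to a compact set containing $(\beta,\alpha)$ (the paper simply takes the singleton $K=\{(\beta,\alpha)\}$), observe that the absence of negative curves forces $\ell=0$, and conclude that condition (3) is vacuous. Your alternative unwinding via Proposition \ref{Prop meta} is also fine and amounts to the same thing.
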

\begin{proof}
    Take $K = \left\{(\beta,\alpha)\right\}$ in \ref{Thm Main dHYM Proof} and observe that we must have $\ell = 0$, so the third condition is vacuously satisfied. 
\end{proof}
\begin{remark}
    A version of the above Corollary can be derived (with slightly different hypotheses) from \cite[Theorem 1.4]{FuYauZhang}, although in their work the emphasis is more on the study of a dHYM flow. We also remark that the above Theorem \ref{Thm Main dHYM Proof} is a special case of Theorem \ref{Thm Main Z Critical Proof} below.
\end{remark}

\begin{proof}[Proof of Corollary \ref{Cor dHYM structure intro}]
{The only claim that requires justification is the claim about the boundary $\partial \mathcal U|_K$ being of real codimension one, the rest of the claims following easily from the proof of Theorem \ref{Thm Main dHYM Proof}. To justify this last claim, we must show that any given `wall' defining $\partial \mathcal U$ is a real algebraic submanifold of $\mathcal C_X \times \mathcal C_X$ of codimension one. More precisely, given any curve $E$ appearing in Theorem \ref{Thm main dHYM intro}, the locus 
$$
W_E := \left\{ (\alpha,\beta)\in \mathcal C_X \times \mathcal C_X \ | \  \int_E \tau_{\mathrm{dHYM}}(\alpha,\beta) = 0\right\}
$$
is a real algebraic submanifold of codimension one. The fact that it is real algebraic is straightforward, since it is the zero locus of the composition of the real algebraic maps
$$
(\alpha,\beta)\mapsto \tau_{\mathrm{dHYM}}(\alpha,\beta)=\alpha + \frac{\beta^2-\alpha^2}{2\beta\cdot \alpha}\beta, \quad \quad \tau \mapsto \int_E \tau.
$$
(The first map is algebraic on the open set $ \left\{(\alpha,\beta)\in H^{1,1}(X,\mathbb R) \ | \  2\beta\cdot\alpha \neq 0\right\}$ which certainly contains $\mathcal C_X \times \mathcal C_X$.) 
Therefore, it suffices to prove that 0 is a regular value of the map
$$
(\alpha,\beta) \mapsto 
\int_E \tau_{\mathrm{dHYM}}(\alpha,\beta).$$ To this end, assume $(\alpha,\beta)$ is a zero of the above map and define the function 
$$
f(\varepsilon) := \int_E \left(\alpha + \varepsilon \beta + \frac{\beta^2 - (\alpha + \varepsilon\beta)^2}{2\beta\cdot (\alpha + \varepsilon \beta)}\beta\right).
$$ Clearly, $f(0) = 0$ and a straightforward calculation shows that
$$
f^\prime(\varepsilon) = \int_E\left( \beta + \frac{(-2\alpha_\varepsilon\cdot\beta)(2\beta\cdot\alpha_\varepsilon)-(\beta^2 - \alpha_\varepsilon^2)(2\beta^2)}{(2\beta\cdot\alpha_\varepsilon)^2}\beta\right)=\frac{-\beta^2}{\beta\cdot\alpha_\varepsilon} \int_E \left(\frac{\beta^2 - \alpha_\varepsilon^2}{2\beta\cdot\alpha_\varepsilon}\beta\right)
$$
where we have written $\alpha_\varepsilon = \alpha + \varepsilon\beta$. This means that
$$
f^\prime(0) = \frac{-\beta^2}{\beta\cdot\alpha}\int_E \frac{\beta^2 - \alpha^2}{2\beta\cdot\alpha}\beta = \frac{\beta^2}{\beta\cdot \alpha}\int_E \alpha \neq 0,
$$
where the last equality follows from the fact that $f(0) = 0$ by assumption.}
\end{proof}
\subsection{Proof of the Main Theorem for the Z-critical equation}

In this section we prove the main Theorem \ref{Thm Main Theorem Z Critical Equation} about the Z-critical equation, whose statement we detail below. Before embarking on the proof, we fix some notation: let $\Omega = (\beta,\rho,U)$ be the data of a \emph{polynomial central charge} on a compact K\"ahler surface, i.e. a polynomial $\rho = \rho(t) = \rho_0 + \rho_1 t + \rho_2 t^2$ a with non-zero complex coefficients subject to the constraints 
\begin{equation}\label{Eqn stab vector phases}
\operatorname{Im}(\rho_2)>0, \operatorname{Im}\left(\frac{\rho_1}{\rho_2}\right)>0, \operatorname{Im}\left(\frac{\rho_0}{\rho_1}\right)> 0,  
\end{equation}
Let $\beta$ and $U$ be cohomology classes with $\beta \in\mathcal C_X$ a K\"ahler class, and $U = 1 + U_1 + U_2 \in \oplus_i H^{i,i}(X,\RR)$ a unipotent cohomology class (with graded components $U_i \in H^{i,i}(X,\RR)$). Then, for any holomorphic line bundle $L \to X$, we shall write $$Z_\Omega(L):= \int_X \rho(\beta)\cdot U \cdot \operatorname{ch}(L)$$ where $\operatorname{ch}(L) = 1 + c_1(L) + c_1(L)^2/2 \in \oplus_i H^{i,i}(X,\RR)$ denotes the Chern character of $L$. We shall moreover always assume that our choice of $\Omega$ is such that $Z_\Omega(L)$ lies in the upper-half plane for the holomorphic line bundle $L$ under consideration. (In this case, we shall informally say that \emph{$\Omega$ defines a polynomial central charge.} This can always be achieved, for example, by scaling $\theta \mapsto t\theta$ for $t>0$ very large for any fixed $L$.) Then, the \emph{phase} or \emph{$Z_\Omega$-phase} $\varphi(L)$ of $L$ (with respect to $Z = Z_\Omega$) is the real number $$\varphi(L):= \arg Z_\Omega(L). \\$$

\noindent The $Z_\Omega$-critical equation is specified once a \emph{lift} $\tilde\Omega$ of $\Omega$ is fixed. Concretely, this means that we fix a choice of K\"ahler metric $\theta \in \beta$ and smooth representative $\tilde U_i \in U_i$. Then we define $\tilde Z_\Omega(L,\tilde \Omega, h)$ as the  degree (2,2) part of the form $$ \rho(\theta) \wedge \tilde U \wedge \operatorname{ch}(L,h)$$ where $\tilde U = 1 + \tilde U_1 + \tilde U_2$ and $\operatorname{ch}(L,h)$ is the Chern-Weil representative of $\operatorname{ch}(L)$ with respect to a Hermitian metric $h$; namely $$ \operatorname{ch}(L,h) = \exp\left(\frac{\i}{2\pi}F_h\right) = 1 + \frac{\i}{2\pi}F_h+ \frac{1}{2}\left(\frac{\i}{2\pi}F_h\right)^2 $$ with $F_h$ the curvature (1,1)-form of the Chern connection associated to $h$. Then the $Z_\Omega$-critical equation takes the form 
\begin{equation}\label{Eqn Z critical}
\operatorname{Im}\left(e^{-\i\varphi(L)}\tilde Z_\Omega(L,\tilde \Omega,h)\right) = 0.
\end{equation} This is a second order fully non-linear equation for the metric $h$.\\

In \cite[Section 2.3]{DMS} the authors derive the notion of a subsolution for the Z-critical equation. (See \cite[Definition 2.33]{DMS}.) They then prove that in our present special case where $X$ is a smooth projective surface, $E = L \to X$ is a line bundle, and the lifted data $(\theta,\rho, \tilde U)$ satisfy the so-called \emph{volume form hypothesis}, the existence of a subsolution is equivalent to the existence of a solution. More precisely, define the forms $\tilde\eta = \tilde\eta(\tilde\Omega,L)$ and $\tilde\gamma=\tilde\gamma(\tilde\Omega,L)$ by the formula 
\begin{equation}\label{Eqn eta gamma def}
\operatorname{Im}\left(e^{-\i\varphi(L)}\tilde Z_\Omega(L,h)\right) = c(\chi_h^2 + \chi_h\wedge\tilde\eta + \tilde\gamma) 
\end{equation} 
where $c \in \RR$ is a (non-zero) normalisation constant and $\chi_h = \frac{\i}{2\pi}F_h$ is the curvature form of (the Chern connection associated to) the Hermitian metric $h$. (This formula implicitly assumes that $\varphi(L)\neq \arg(\pm\rho_0)$ so the $\chi_h^2$ term does not vanish in the expansion. See the proof of Lemma \ref{Lemma eta gamma formulas}.) As the authors observe in \cite{DMS} after completing the square, the equation is equivalent to solving 
$$
\left(\chi_h+\frac{1}{2}\tilde\eta\right)^2 = \frac{1}{4}\tilde\eta^2 - \tilde\gamma.
$$ 
Then the (lifted) data $\tilde\Omega = (\theta, \rho, \tilde U)$ are said to satisfy the \emph{volume form hypothesis for L} if the (2,2)-form $$ \frac{1}{4}\tilde\eta^2 - \tilde\gamma \in \mathcal A^{2,2}(X,\RR)$$ is a volume form on $X$. \\

\begin{lemma}\label{Lemma eta gamma formulas}
    Let $\Omega = (\beta,\rho,U)$ be a choice of stability data defining a polynomial central charge on a projective surface $X$, with a fixed lift 
    $$
    \tilde\Omega = \left(\theta,\rho_0+\rho_1 t+\rho_2 t^2, 1 + \tilde U_1 + \tilde U_2\right),
    $$ 
    and let $L\to X$ be a holomorphic line bundle on $X$ such that $\varphi(L) \neq \arg(\pm\rho_0)$. Then the forms $\tilde\eta(\tilde \Omega,L)$ and $\tilde\gamma(\tilde\Omega,L)$ are given by  
    \begin{align}\label{Eqn eta gamma formulas}
    \tilde\eta(\tilde\Omega,L) &= \frac{2}{c_0}\left(c_0\tilde U_1 + c_1\theta\right),\\
    \tilde\gamma(\tilde\Omega,L) &= \frac{2}{c_0}\left(c_0\tilde U_2 + c_1\theta\wedge\tilde U_1 +c_2\theta^2\right),
    \end{align}
    where $c_k = \mathrm{Im}(\rho_k)\cot\varphi(L)-\mathrm{Re}(\rho_k)$.
\end{lemma}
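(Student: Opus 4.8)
The plan is to prove the formulas by directly expanding the product defining $\tilde Z_\Omega(L,h)$ and reading off coefficients. First I would write out the degree $(2,2)$ part of $\rho(\theta)\wedge\tilde U\wedge\operatorname{ch}(L,h)$, using $\rho(\theta) = \rho_0 + \rho_1\theta + \rho_2\theta^2$ (with homogeneous parts in degrees $0,2,4$), $\tilde U = 1 + \tilde U_1 + \tilde U_2$, and $\operatorname{ch}(L,h) = 1 + \chi_h + \tfrac12\chi_h^2$ where $\chi_h = \tfrac{\i}{2\pi}F_h$. Collecting the six triples of homogeneous components whose degrees sum to $4$ and organising by powers of $\chi_h$ gives
\[
\tilde Z_\Omega(L,h) \;=\; \tfrac12\rho_0\,\chi_h^2 \;+\; (\rho_0\tilde U_1 + \rho_1\theta)\wedge\chi_h \;+\; \bigl(\rho_0\tilde U_2 + \rho_1\theta\wedge\tilde U_1 + \rho_2\theta^2\bigr).
\]

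Next I would apply $\operatorname{Im}\!\bigl(e^{-\i\varphi(L)}\,\cdot\,\bigr)$ termwise. For a complex scalar $\rho_k$ multiplying a real form $\psi$ one has $\operatorname{Im}\!\bigl(e^{-\i\varphi(L)}\rho_k\bigr)\psi = \bigl(\cos\varphi(L)\operatorname{Im}\rho_k - \sin\varphi(L)\operatorname{Re}\rho_k\bigr)\psi = \sin\varphi(L)\,c_k\,\psi$, with $c_k = \operatorname{Im}(\rho_k)\cot\varphi(L) - \operatorname{Re}(\rho_k)$ exactly as in the statement; here I use that $\sin\varphi(L)\neq 0$, which holds because $\varphi(L) = \arg Z_\Omega(L)\in(0,\pi)$ under the standing assumption that $\Omega$ defines a polynomial central charge. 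This yields
\[
\operatorname{Im}\!\bigl(e^{-\i\varphi(L)}\tilde Z_\Omega(L,h)\bigr) \;=\; \sin\varphi(L)\Bigl[\tfrac12 c_0\,\chi_h^2 + (c_0\tilde U_1 + c_1\theta)\wedge\chi_h + \bigl(c_0\tilde U_2 + c_1\theta\wedge\tilde U_1 + c_2\theta^2\bigr)\Bigr].
\]
Finally I would normalise: the hypothesis $\varphi(L)\neq\arg(\pm\rho_0)$ is precisely the statement that $c_0\neq 0$ (since $c_0 = 0$ would force $e^{-\i\varphi(L)}\rho_0\in\mathbb{R}$, i.e.\ $\varphi(L)\equiv\arg(\pm\rho_0)$ mod $\pi$), so the constant $c := \tfrac12 c_0\sin\varphi(L)$ is non-zero. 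Factoring $c$ out of the bracket puts the right-hand side in the shape $c\bigl(\chi_h^2 + \chi_h\wedge\tilde\eta + \tilde\gamma\bigr)$ of \eqref{Eqn eta gamma def}, and comparing coefficients forces $\tilde\eta = \frac{2}{c_0}(c_0\tilde U_1 + c_1\theta)$ and $\tilde\gamma = \frac{2}{c_0}(c_0\tilde U_2 + c_1\theta\wedge\tilde U_1 + c_2\theta^2)$, which are the asserted formulas (and pass, in cohomology, to the classes $\eta,\gamma$ of Theorem \ref{Thm Main Theorem Z Critical Equation}).

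I expect no genuine obstacle here: the content is a bookkeeping expansion together with elementary complex arithmetic. The only points requiring care are the two hypotheses: the polynomial-central-charge condition keeps $\varphi(L)$ in $(0,\pi)$, so that dividing by $\sin\varphi(L)$ is legitimate, and $\varphi(L)\neq\arg(\pm\rho_0)$ keeps the $\chi_h^2$-coefficient (hence the normalising constant $c$) away from zero — without the latter the expansion \eqref{Eqn eta gamma def} would not even have the stated form, since the leading term would degenerate.
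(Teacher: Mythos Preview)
Your proof is correct and follows essentially the same approach as the paper: expand the $(2,2)$-part of $\rho(\theta)\wedge\tilde U\wedge\operatorname{ch}(L,h)$, apply $\operatorname{Im}(e^{-\i\varphi(L)}\,\cdot\,)$ termwise to obtain the factor $c_k$ in front of each real form, and then normalise using $c_0\neq 0$. If anything, your handling of the overall constant (explicitly identifying it as $\sin\varphi(L)$) is slightly cleaner than the paper's, which writes $e^{-\i\varphi(L)} = B^{-1}(\cot\varphi(L) - \i)$ and so carries a harmless slip in the prefactor.
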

\begin{proof}
    This is a straightforward computation. Writing $\chi_h$ for the curvature (1,1) form $\frac{\i}{2\pi}F_h$, we first note that $Z_{\tilde\Omega}(L,h)$, i.e. the (2,2) part of the form 
    $$
    \left(\rho_0+\rho_1 \theta + \rho_2 \theta^2 \right)\wedge \mathrm{ch}(L,h) \wedge \left(1 + \tilde U_1 +\tilde U_2 \right),
    $$
    is given by 
    $$
    \frac{1}{2}\rho_0 \chi_h^2 + \left(\rho_0 \tilde U_1 + \rho_1 \theta \right)\wedge \chi_h + \rho_0 \tilde U_2 +\rho_1 \theta\wedge\tilde U_1 +\rho_2 \theta^2.
    $$
    Let us write $e^{\i\varphi(L)} = A + \i B$ where $A,B$ are real numbers. By the hypothesis that $\Omega$ defines a polynomial central charge, $B > 0$. Therefore, we can write $e^{-\i\varphi(L)} = B^{-1}(\cot\varphi(L) - \i)$. Now observe that 
    $$
    \operatorname{Im}(e^{-\i\varphi(L)}\tilde Z_{\tilde\Omega} (L,h)) = B^{-1}\left(\frac{1}{2} c_0 \chi_h^2 + (c_0 \tilde U_1 + c_1 \theta)\wedge \chi_h + c_0 \tilde U_2 + c_1 \theta\wedge\tilde U_1 + c_2 \theta^2\right)
    $$
    where 
    $
    c_k = \mathrm{Im}(\rho_k)\cot \varphi(L) - \mathrm{Re}(\rho_k).
    $
    Now, if $\varphi(L)\neq \arg(\pm\rho_0)$ then $c_0 \neq 0$ and the result follows by comparing with \eqref{Eqn eta gamma def}.
\end{proof}
Let us therefore set \begin{align}    
    \eta(\Omega,L) &:= \frac{2}{c_0}\left(c_0 U_1 + c_1\beta\right) \in H^{1,1}(X,\RR),\\
    \gamma(\Omega,L) &:= \frac{2}{c_0}\left(c_0 U_2 + c_1\beta\wedge U_1 +c_2\beta^2\right)\in H^{2,2}(X,\RR).
    \end{align} 

\begin{corollary}\label{Cor positive volume implies volume form}
    Let $X, L$ and $\Omega$ be as in the Lemma. If $$V(\Omega,L):= \int_X \frac{1}{4}\eta(\Omega,L)^2 - \gamma(\Omega,L) > 0$$ then there exists a choice of lift $\tilde\Omega$ that satisfies the volume form hypothesis for $L$. 
\end{corollary}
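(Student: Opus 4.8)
The plan is to exploit two facts: that $V(\Omega,L)$, despite being written in terms of the lift-dependent forms $\tilde\eta,\tilde\gamma$, is in fact the purely cohomological number $\int_X\bigl(\tfrac14\eta(\Omega,L)^2-\gamma(\Omega,L)\bigr)$, hence independent of the chosen lift; and that one retains considerable freedom in the choice of lift $\tilde\Omega$, in particular the freedom to replace $\tilde U_2$ by any other smooth closed real representative of the class $U_2$. Using this freedom I will arrange that $\tfrac14\tilde\eta^2-\tilde\gamma$ is \emph{pointwise} positive, which is exactly the volume form hypothesis for $L$.

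Concretely, I would first fix an arbitrary reference lift $\tilde\Omega_0=(\theta,\rho,1+\tilde U_1+\tilde U_2)$ and form the smooth closed real $(2,2)$-form $g:=\tfrac14\,\tilde\eta(\tilde\Omega_0,L)^2-\tilde\gamma(\tilde\Omega_0,L)$ provided by Lemma~\ref{Lemma eta gamma formulas}. Since $\tilde\eta$ and $\tilde\gamma$ represent $\eta(\Omega,L)$ and $\gamma(\Omega,L)$, we get $\int_X g=V(\Omega,L)>0$ by hypothesis. Put $c:=V(\Omega,L)/\int_X\theta^2>0$. Then $g-c\,\theta^2$ is a $d$-closed $(2,2)$-form of vanishing cohomology class (its integral over $X$ equals $V(\Omega,L)-V(\Omega,L)=0$, and $H^4(X,\mathbb R)\cong\mathbb R$), hence $d$-exact. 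By the $\partial\bar\partial$-lemma on the compact Kähler surface $X$ there is a smooth real $(1,1)$-form $\nu$ with $g-c\,\theta^2=\sqrt{-1}\,\partial\bar\partial\nu$.

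It then remains to recognise $\sqrt{-1}\,\partial\bar\partial\nu$ as a change of lift. From the formula for $\tilde\gamma$ in Lemma~\ref{Lemma eta gamma formulas}, the representative $\tilde U_2$ enters only through the term $\tfrac{2}{c_0}(c_0\tilde U_2)=2\tilde U_2$, while $\tilde\eta$ does not involve $\tilde U_2$ at all. Hence, setting $\tilde U_2':=\tilde U_2+\tfrac12\sqrt{-1}\,\partial\bar\partial\nu$ — still a smooth closed real $(2,2)$-form representing $U_2$, so that $\tilde\Omega':=(\theta,\rho,1+\tilde U_1+\tilde U_2')$ is a legitimate lift of $\Omega$ — one computes $\tfrac14\,\tilde\eta(\tilde\Omega',L)^2-\tilde\gamma(\tilde\Omega',L)=g-\sqrt{-1}\,\partial\bar\partial\nu=c\,\theta^2$, which is a volume form. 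Thus $\tilde\Omega'$ satisfies the volume form hypothesis for $L$, as required.

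I do not anticipate a real obstacle here: once Lemma~\ref{Lemma eta gamma formulas} and the subsolution theory of \cite{DMS} are in place, this is a soft consequence of the $\partial\bar\partial$-lemma. The only points that merit an explicit sentence are that the modification $\tilde U_2\mapsto\tilde U_2+\tfrac12\sqrt{-1}\,\partial\bar\partial\nu$ is admissible (it is $\partial\bar\partial$-exact, hence closed and cohomologous to $\tilde U_2$, and real because $g-c\,\theta^2$ is), and that $V(\Omega,L)$ is genuinely lift-independent, so that the hypothesis does bound $\int_X g$ from below for the reference form $g$.
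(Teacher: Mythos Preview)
Your proof is correct and follows essentially the same approach as the paper: fix a reference lift, invoke the $\partial\bar\partial$-lemma to write the difference between $\tfrac14\tilde\eta^2-\tilde\gamma$ and a volume form as $i\partial\bar\partial$ of a real $(1,1)$-form, and absorb this into a modification of $\tilde U_2$. The only cosmetic difference is that you explicitly take the target volume form to be $c\,\theta^2$ with $c=V(\Omega,L)/\int_X\theta^2$, whereas the paper simply asserts that the class $\tfrac14\eta^2-\gamma\in H^4(X,\mathbb R)$ contains \emph{some} volume form $v$; the underlying argument and the sign bookkeeping are identical.
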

\begin{proof}
    Clearly, if the numerical inequality $V(\Omega,L)>0$ is satisfied, then the class $$\frac{1}{4}\eta(\Omega,L)^2 - \gamma(\Omega,L)\in H^4(X,\RR)$$ contains a volume form $v$. Fix any lift $\tilde\Omega_0 = (\theta,\rho,1+\tilde U_{1} + \tilde U_{2})$ of $\Omega$. Then, by the $\ddbar{}$-lemma, there exists a real valued (1,1) form $\zeta$ on $X$ such that $$v = \frac{1}{4}\tilde\eta(\tilde\Omega_0,L)^2 - \tilde\gamma(\tilde\Omega_0,L) + \i \ddbar{\zeta}.$$ Setting $\tilde U_1^\prime = \tilde U_{1}$ and $$\tilde U^\prime_2 = \tilde U_{2} - \frac{\i}{2}\ddbar{\zeta},$$ we see immediately from \eqref{Eqn eta gamma formulas} that if $\tilde\Omega = (\omega,\rho,1+\tilde U^\prime_1+\tilde U^\prime_2)$ then $$v = \frac{1}{4}\tilde\eta(\tilde\Omega,L)^2 - \tilde\gamma(\tilde\Omega,L)$$ is a volume form.
\end{proof}

In this notation, Theorem \ref{Thm main dHYM intro} can be stated as follows. 
\begin{theorem}\label{Thm Main Z Critical Proof}
    Let $X$ be a projective surface and $L\to X$ a holomorphic line bundle on $X$. Suppose $K \subseteq \mathcal C_X \times (\CC^*)^3\times  \bigoplus_i H^{i,i}(X,\RR)$ be a compact subset such that each $\Omega \in K$ defines a polynomial central charge $Z_\Omega$ on $X$. Morevoer, assume that for each $\Omega \in K$, we have $V(\Omega,L) > 0$ and $\varphi(L) \neq \arg(\pm\rho_0)$. Then, there exists a non-negative integer $\ell \geq 0$ and finitely many curves $E_1, \cdots, E_\ell$ on $X$, of negative self-intersection, such that the following are equivalent. \begin{enumerate}
        \item For every $\Omega \in K$ and every lift $\tilde\Omega$ satisfying the volume form hypothesis at $L$, the $Z_\Omega$-critical equation admits a solution. 
        \item For $i = 1, \cdots, \ell$, we have $$ s(\Omega,L)\left(\int_{E_i} c_1(L) + \frac{1}{2}\eta(\Omega,L) \right) > 0$$
        where $s(\Omega,L)$ is the sign of the nonzero real number 
        $$
        \int_{X}\left(c_1(L)+\frac{1}{2}\eta(\Omega,L)\right)\cdot\beta \in \RR.
        $$
    \end{enumerate}
\end{theorem}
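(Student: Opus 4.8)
The plan is to run the argument of Theorem \ref{Thm Main dHYM Proof} essentially verbatim: reduce the $Z_\Omega$-critical equation to a complex Monge--Amp\`ere equation, solve the latter with Yau's theorem \cite{Yau}, and quote the uniform finiteness statement of Lemma \ref{Lem cardinality bound cvx hull} to produce the curves $E_1,\dots,E_\ell$. The only genuinely new point compared with the dHYM case is the bookkeeping of the sign $s(\Omega,L)$, which is where I expect the (mild) difficulty to lie.

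First I would record the cohomological reduction. Fix $\Omega = (\beta,\rho,U) \in K$. By the corollary immediately preceding the statement, $V(\Omega,L) > 0$ guarantees a lift $\tilde\Omega$ satisfying the volume form hypothesis for $L$, and by Lemma \ref{Lemma eta gamma formulas} (here $\varphi(L) \neq \arg(\pm\rho_0)$ ensures $c_0 \neq 0$) completing the square as in \cite{DMS} rewrites the $Z_\Omega$-critical equation \eqref{Eqn Z critical} as
$$
\left(\chi_h + \tfrac12\tilde\eta\right)^2 = \tfrac14\tilde\eta^2 - \tilde\gamma,
$$
an equation for the curvature form $\chi_h = \tfrac{\i}{2\pi}F_h$ of a Hermitian metric $h$ on $L$, whose right-hand side is a volume form. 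In cohomology the relevant class is $\mu := c_1(L) + \tfrac12\eta(\Omega,L)$, and integrating \eqref{Eqn eta gamma def} over $X$ — using the normalisation $\int_X \operatorname{Im}(e^{-\i\varphi(L)}\tilde Z_\Omega(L,h)) = 0$ — gives $\int_X \mu^2 = \int_X\bigl(\tfrac14\eta(\Omega,L)^2 - \gamma(\Omega,L)\bigr) = V(\Omega,L) > 0$. By the Hodge index theorem the orthogonal complement of $\mu$ is then negative definite, so it cannot contain the Kähler class $\beta$ (which has positive self-intersection); hence $\int_X \mu\cdot\beta \neq 0$ and $s(\Omega,L) \in \{\pm1\}$ is well defined. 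Put $\tau(\Omega,L) := s(\Omega,L)\mu$; then $\int_X \tau(\Omega,L)^2 = V(\Omega,L) > 0$ and $\int_X \tau(\Omega,L)\cdot\beta = \bigl|\int_X \mu\cdot\beta\bigr| > 0$, so $\tau(\Omega,L) \in \mathcal{P}^+_X \subseteq \mathcal{B}_X$ by Lemma \ref{Lemma positive inside big}.

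Next I would check continuity and extract the test curves. On $K$ the central charge $Z_\Omega(L)$ stays in the open upper half-plane, so $\varphi(L) = \arg Z_\Omega(L)$, the constants $c_k$, and hence $\eta(\Omega,L)$ and $\mu$, depend continuously on $\Omega$ (again using $c_0 \neq 0$), while $s(\Omega,L)$ is a continuous $\{\pm1\}$-valued function since $\int_X \mu\cdot\beta$ is continuous and nowhere zero on $K$. Thus $\Omega \mapsto \tau(\Omega,L)$ is continuous and $\widetilde K := \{\tau(\Omega,L) : \Omega \in K\}$ is a compact subset of $\mathcal{P}^+_X$. By Lemma \ref{Lemma convex hull}, $\widetilde K$ lies in the convex hull of finitely many classes of $\mathcal{P}^+_X$, so Lemma \ref{Lem cardinality bound cvx hull} produces finitely many curves $E_1,\dots,E_\ell$ of negative self-intersection, depending only on $\widetilde K$ (hence only on $K$ and $L$), which form a candidate set of destabilisers for every $\tau \in \widetilde K$; combining Proposition \ref{Prop meta} with Lamari's Nakai--Moishezon criterion (as in Corollary \ref{Cor Nakai Moishezon}) and $\tau(\Omega,L) \in \mathcal{P}^+_X$, a class $\tau \in \widetilde K$ is Kähler if and only if $\int_{E_i}\tau > 0$ for all $i$. (Tracking the number $k$ of vertices yields the bound $\ell \leq k\rho(X) - k$, since $X$ is projective.)

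Finally I would assemble the equivalences for each fixed $\Omega \in K$. If the $Z_\Omega$-critical equation admits a solution $h$ for some lift satisfying the volume form hypothesis, then $\chi_h + \tfrac12\tilde\eta$ is a closed real $(1,1)$-form whose square is a volume form, hence nondegenerate; on the surface $X$ this forces $\chi_h + \tfrac12\tilde\eta$ or its negative to be Kähler, and pairing its class $\pm\mu$ against $\beta$ identifies the Kähler one as $\tau(\Omega,L)$, so $\tau(\Omega,L)$ is Kähler, i.e. $\int_{E_i}\tau(\Omega,L) > 0$ for all $i$. Conversely, if $\int_{E_i}\tau(\Omega,L) > 0$ for all $i$ then $\tau(\Omega,L)$ is Kähler; given any lift $\tilde\Omega$ satisfying the volume form hypothesis, $\tfrac14\tilde\eta^2 - \tilde\gamma$ is a volume form of total mass $V(\Omega,L) = \int_X \tau(\Omega,L)^2$, so by \cite{Yau} there is a Kähler form $\psi$ in the class $\tau(\Omega,L)$ with $\psi^2 = \tfrac14\tilde\eta^2 - \tilde\gamma$; then $\chi_h := s(\Omega,L)\psi - \tfrac12\tilde\eta$ is a real closed $(1,1)$-form representing $c_1(L)$, hence by the $\ddbar$-lemma the curvature form of some Hermitian metric $h$ on $L$, and $(\chi_h + \tfrac12\tilde\eta)^2 = \psi^2 = \tfrac14\tilde\eta^2 - \tilde\gamma$, so $h$ solves the $Z_\Omega$-critical equation. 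The hard part, such as it is, is genuinely just keeping the sign $s(\Omega,L)$ straight throughout: one needs both that $\int_X \mu\cdot\beta \neq 0$ (Hodge index, as above) and that $s(\Omega,L)\mu$ is precisely the member of $\{\pm\mu\}$ that can be positive, so that the completed-square form may be chosen $\pm$-Kähler compatibly with the fixed topological class $c_1(L)$.
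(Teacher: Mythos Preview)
Your proposal is correct and follows essentially the same route as the paper: reduce to the class $\tau(\Omega,L)=s(\Omega,L)\bigl(c_1(L)+\tfrac12\eta(\Omega,L)\bigr)\in\mathcal P_X^+$, push the compact set $K$ forward to a compact $\widetilde K\subseteq\mathcal P_X^+$, and invoke Lemmas \ref{Lemma convex hull} and \ref{Lem cardinality bound cvx hull} to extract the curves. The only cosmetic difference is that for the final equivalence the paper appeals directly to \cite[Section 2.3.3 and Theorem 2.45]{DMS} (subsolution $\Leftrightarrow$ solution), whereas you spell out the Yau/Calabi argument by hand exactly as in the dHYM case; both amount to the same thing.
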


\begin{remark}
{Note that for any line bundle $L$ the set of $\Omega$ satisfying $V(\Omega,L)>0$ and $\varphi_\Omega(L) \neq \arg(\rho_0)$ (the natural `phase' hypotheses in the context of the Z-critical equation) is non-empty and open. This can easily be seen by perturbing the stability vector $\rho$ (if required) and then scaling $\beta \mapsto k\beta$ for $k>0$ large.}
\end{remark}
\begin{proof}[Proof of Theorem \ref{Thm Main Z Critical Proof}]
Let a compact subset $K \subseteq \mathcal C_X \times (\mathbb C^*)^3 \times \bigoplus_i H^{i,i}(X,\mathbb R)$ be given such that each element $\Omega = (\beta,\rho,U) \in K$ defines a valid polynomial central charge with $\varphi_\Omega(L)\neq\arg(\pm\rho_0)$ and such that $V(\Omega,L) > 0$. Recall that the  topological constant $\varphi(L)= \varphi_\Omega(L)$ is chosen precisely so that $$ \int_X \operatorname{Im}\left(e^{-\i\varphi_\Omega (L)}Z_\Omega(L)\right) = 0.$$ In our notation, this is equivalent to $$0 = \int_X\left(c_1(L)^2 + c_1(L)\cdot \beta(\Omega,L) + \gamma(\Omega,L)\right) = \int_X\left(c_1(L) + \frac{1}{2}\eta(\Omega,L)\right)^2 - V(\Omega,L).$$ Therefore, the inequality $V(\Omega,L) >0$ implies that the class $\sigma(\Omega,L) := c_1(L) + \frac{1}{2}\eta(\Omega,L)$ has positive self-intersection. So, by the Hodge-Index Theorem, either $\sigma(\Omega,L)$ or its negative is in the cone $\mathcal P^{+}_X.$ Let $s(\Omega,L)\in \left\{\pm1\right\}$ be defined by the condition that $$\tau_Z(\Omega,L):= s(\Omega,L)\tau(\Omega,L) \in \mathcal P^{+}_X.$$ Clearly, we have $$s(\Omega, L) = \operatorname{sign}\left(\int_X \sigma(\Omega,L)\cdot\beta\right).$$ Thus, the map $$\mathcal C_X \times (\mathbb C^*)^3\times \bigoplus_i H^{i,i}(X,\RR)\to H^{1,1}(X,\RR), \; \; \Omega \mapsto \tau_Z(\Omega,L)$$ is continuous and takes $K$ onto a compact subset $\tilde K = \tau_Z(K)$ of $\mathcal P^+_X$. By Lemma \ref{Lemma convex hull}, $\tilde K$ is contained in the convex hull of finitely many points of $\mathcal P^+_X$. Now, by Lemma \ref{Lem cardinality bound cvx hull}, there exist finitely many curves $E_1, \cdots, E_\ell$ of negative self-intersection such that for each $\tau \in \tilde K$, $\tau$ is Kähler if and only if $$ \int_{E_i} \tau  >0 $$ for $i = 1, \cdots, \ell.$ Finally, according to \cite[Section 2.3.3]{DMS} this is equivalent to the existence of a subsolution for the $Z_\Omega$-critical equation, and by \cite[Theorem 2.45]{DMS}, equivalent to the existence of a solution. This completes the proof. 
\end{proof}

\begin{corollary}
    Let $X$ and $L$ be as in the Theorem \ref{Thm Main Z Critical Proof}. Suppose $X$ does not admit any curves of negative self-intersection. Let $\Omega$ be a choice of stability data defining a polynomial central charge $Z_\Omega$ and such that $\varphi(L)\neq\arg(\pm\rho_0)$ and $V(\Omega,L) > 0$. Then, for any lift $\tilde\Omega$ satisfying the volume form hypothesis, the $Z_\Omega$-critical equation admits a solution on $L$.
 \end{corollary}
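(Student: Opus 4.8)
The plan is to deduce this immediately from Theorem~\ref{Thm Main Z Critical Proof}, in complete analogy with the corresponding statement for the dHYM equation. I would apply that theorem with the compact set $K$ taken to be the singleton $\{\Omega\}$. The first thing to verify is that this choice of $K$ meets the hypotheses of Theorem~\ref{Thm Main Z Critical Proof}: a one-point set is trivially compact, and by assumption $\Omega$ defines a polynomial central charge $Z_\Omega$ on $X$ with $\varphi(L) \neq \arg(\pm\rho_0)$ and $V(\Omega,L) > 0$, which are exactly the standing hypotheses required. Hence the theorem applies and yields a non-negative integer $\ell$ and curves $E_1,\dots,E_\ell$ on $X$, all of negative self-intersection, such that condition $(1)$ (the $Z_\Omega$-critical equation admits a solution on $L$ for every lift $\tilde\Omega$ satisfying the volume form hypothesis) is equivalent to condition $(2)$ (the finitely many numerical inequalities $s(\Omega,L)\int_{E_i}(c_1(L)+\tfrac{1}{2}\eta(\Omega,L)) > 0$).

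The key point is then that the hypothesis that $X$ carries no curve of negative self-intersection forces $\ell = 0$: every curve $E_i$ produced by Theorem~\ref{Thm Main Z Critical Proof} has, by construction, negative self-intersection, so in the absence of such curves the list must be empty. Consequently condition $(2)$ reduces to an empty conjunction, which is vacuously satisfied, and the equivalence furnished by the theorem then delivers condition $(1)$ --- precisely the assertion of the corollary.

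I do not expect any genuine obstacle here. All of the substantive content --- the reduction of the $Z_\Omega$-critical equation to a complex Monge--Amp\`ere problem following \cite{DMS}, the appeal to Yau's theorem, and the Zariski-decomposition input funneled through Lemma~\ref{Lem cardinality bound cvx hull} and Lemma~\ref{Lemma convex hull} --- is already packaged inside Theorem~\ref{Thm Main Z Critical Proof}. The only step meriting an explicit line is the observation that the finiteness output $\ell$ vanishes when $X$ has no negative curves, which is immediate from the fact that the curves supplied by the theorem are of negative self-intersection by construction.
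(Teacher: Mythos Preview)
Your proposal is correct and follows exactly the paper's own argument: take $K = \{\Omega\}$ in Theorem~\ref{Thm Main Z Critical Proof}, observe that the absence of negative curves forces $\ell = 0$, and conclude that condition~(2) is vacuously true.
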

 \begin{proof}
     In Theorem \ref{Thm Main Z Critical Proof}, take $K = \left\{\Omega\right\}$ and note that necessarily $\ell = 0$. Thus, the second of the two equivalent conditions is vacuously true.
 \end{proof}

\noindent We remark that in this setting a completely analogous result to Corollary \ref{Cor dHYM structure intro} also holds, {and is moreover in close analogy with results in the theory of Bridgeland stability. (See, for example, \cite[Proposition 9.3]{BridgelandK3}.)}
\begin{corollary} \label{Cor Z-critical structure}
    Fix a finite set $S$ of holomorphic line bundles $L_i\to X$ (for $i = 1, \cdots, k$) on a compact Kähler surface $X$. Let $V_S$ denote the set of triples $\Omega= (\beta,\rho,U)$ in $\mathcal C_X \times (\CC^*)^3 \times \bigoplus H^{i,i}(X,\RR)$ such that $Z_\Omega(L_i)$ lies in the upper half-plane, $\varphi_\Omega(L_i)\neq\arg(\pm\rho_0)$ and $V(\Omega,L_i) > 0$ for each $i = 1, \cdots, k$. Let $\mathcal U_i$ denote the subset of $V_S$ comprising those $\Omega$ such that for any choice of lift $\tilde{\Omega}$ of $\Omega$ satisfying the volume form hypothesis for $L_i$, the Z-critical equation \eqref{Eqn Z critical} admits a solution. Then, $\mathcal U_i$ is an open subset of $V_S$ and for any compact subset $K$ of $V_S$, the set $\mathcal U_i \cap K$ is cut out by the finitely many real algebraic inequalities
    $$
    W_{ij}(\Omega):= \int_{E_{ij}} \tau_Z(\Omega,L_i) > 0, \; \; j = 1, \cdots, \ell_i
    $$
    where $E_{i1}, \cdots, E_{i\ell_i}$ are the curves appearing in Theorem \ref{Thm Main Z Critical Proof}. {In particular, if $C$ is any connected component of 
    $$
    K \setminus \bigcup_{i,j} \left\{W_{ij}(\Omega)=0\right\}
    $$ then for any $i_0 \in \left\{1,\cdots, k\right\}$, we have $\Omega \in \mathcal U_{i_0}$ for some $\Omega \in C$ if and only if $\Omega \in \mathcal U_{i_0}$ for every $\Omega \in C$.  
    }
\end{corollary}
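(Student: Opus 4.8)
The plan is to derive the statement entirely from Theorem \ref{Thm Main Z Critical Proof}, applied once for each line bundle $L_i \in S$, together with two elementary observations: that $V_S$ is open, and that each function $W_{ij}$ is a rational function of the coordinates of $\Omega$ whose denominator is one of the quantities already required to be nonzero on $V_S$. First I would check that $V_S$ is open in $\mathcal C_X\times(\CC^*)^3\times\bigoplus_k H^{k,k}(X,\RR)$. The condition that $Z_\Omega(L_i)=\int_X\rho(\beta)\cdot U\cdot\operatorname{ch}(L_i)$ lie in the upper half-plane is $\operatorname{Im}Z_\Omega(L_i)>0$, an open condition since $Z_\Omega(L_i)$ is a polynomial in the (real) coordinates of $\Omega$; the condition $\varphi_\Omega(L_i)\neq\arg(\pm\rho_0)$ is equivalent to $c_0(\Omega,L_i)\neq 0$ (this is the hypothesis isolated in the proof of Lemma \ref{Lemma eta gamma formulas}), again open; and on the locus where both hold, $\cot\varphi_\Omega(L_i)$ and hence $\eta(\Omega,L_i),\gamma(\Omega,L_i),V(\Omega,L_i)$ depend continuously on $\Omega$, so $V(\Omega,L_i)>0$ is open as well.

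For the openness of $\mathcal U_i$ in $V_S$, fix $\Omega_0\in\mathcal U_i$. Since $V_S$ is open in the finite-dimensional ambient space, choose a compact neighbourhood $K_0\subseteq V_S$ of $\Omega_0$ and apply Theorem \ref{Thm Main Z Critical Proof} with the line bundle $L_i$ and the compact set $K_0$, obtaining curves $E_{i1},\dots,E_{i\ell_i}$ of negative self-intersection with $\mathcal U_i\cap K_0=\{\Omega\in K_0: W_{ij}(\Omega)>0,\ j=1,\dots,\ell_i\}$. Each $W_{ij}$ is continuous and $W_{ij}(\Omega_0)>0$, so this set contains an open $V_S$-neighbourhood of $\Omega_0$; as $\Omega_0$ was arbitrary, $\mathcal U_i$ is open. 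Applying the same theorem directly to a given compact $K\subseteq V_S$, once for each $i$, yields the asserted description $\mathcal U_i\cap K=\{\Omega\in K: W_{ij}(\Omega)>0,\ j=1,\dots,\ell_i\}$, with $E_{i1},\dots,E_{i\ell_i}$ the curves furnished by Theorem \ref{Thm Main Z Critical Proof} for this $K$.

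It then remains to see that each condition $W_{ij}>0$ is cut out by an honest polynomial inequality in the coordinates of $\Omega$. Write $\sigma(\Omega,L_i):=c_1(L_i)+\tfrac12\eta(\Omega,L_i)=c_1(L_i)+U_1+\tfrac{c_1}{c_0}\beta$. From $W_{ij}(\Omega)=\int_{E_{ij}}\tau_Z(\Omega,L_i)=s(\Omega,L_i)\int_{E_{ij}}\sigma(\Omega,L_i)$ and $s(\Omega,L_i)=\operatorname{sign}\big(\int_X\sigma(\Omega,L_i)\cdot\beta\big)$ one gets $W_{ij}(\Omega)>0\iff\big(\int_X\sigma\cdot\beta\big)\big(\int_{E_{ij}}\sigma\big)>0$. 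Substituting $\cot\varphi_\Omega(L_i)=R/I$ with $R=\operatorname{Re}Z_\Omega(L_i),\,I=\operatorname{Im}Z_\Omega(L_i)>0$ polynomial in $\Omega$, the coefficient $\tfrac{c_1}{c_0}$ equals $\frac{\operatorname{Im}(\rho_1)R-\operatorname{Re}(\rho_1)I}{\operatorname{Im}(\rho_0)R-\operatorname{Re}(\rho_0)I}$, whose denominator $D(\Omega)=I(\Omega)\,c_0(\Omega,L_i)$ is a polynomial that is nonzero on $V_S$. Hence $D\sigma$ has polynomial coordinates, and since $D^{-2}>0$ the inequality above is equivalent to $\big(\int_X(D\sigma)\cdot\beta\big)\big(\int_{E_{ij}}(D\sigma)\big)>0$, which is polynomial in the coordinates of $\Omega$. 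In particular each $\{W_{ij}=0\}$ is a real algebraic subset of $V_S$.

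Finally, the ``in particular'' clause is a connectedness argument: on a connected component $C$ of $K\setminus\bigcup_{i,j}\{W_{ij}=0\}$ every $W_{ij}$ is continuous and nowhere zero, hence of constant sign on $C$; by the description just obtained, $\Omega\in\mathcal U_{i_0}$ precisely when $W_{i_0j}(\Omega)>0$ for all $j$, so $\mathcal U_{i_0}\cap C$ is either all of $C$ or empty, which is exactly the asserted dichotomy. I expect the only delicate point to be the bookkeeping in the third step --- correctly isolating the denominators coming from $\cot\varphi_\Omega(L_i)$ and from the sign $s(\Omega,L_i)$, and recognizing them as the very quantities assumed nonzero on $V_S$ --- but no idea beyond Theorem \ref{Thm Main Z Critical Proof} and elementary topology is required.
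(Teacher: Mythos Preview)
Your proof is correct and follows essentially the same approach as the paper: both deduce the description of $\mathcal U_i\cap K$ directly from Theorem \ref{Thm Main Z Critical Proof} and then use the elementary connectedness/constant-sign argument for the last clause. You supply more detail than the paper does --- in particular the explicit verification that $V_S$ is open and that the $W_{ij}$ are genuinely real algebraic (by clearing the denominator $D=I\cdot c_0$, which is nonvanishing on $V_S$) --- whereas the paper simply declares the first claims ``immediate from the proof of Theorem \ref{Thm Main Z Critical Proof}''.
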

\begin{proof}
    The first claim is immediate from the proof of Theorem \ref{Thm Main Z Critical Proof}. The only thing that needs justification is the last sentence. But if $C$ is a connected component of 
    $$
    K \setminus \bigcup_{i,j} \left\{W_{ij}(\Omega)=0\right\}
    $$
    then, for each value of $i,j$, the continuous assignment $\Omega \mapsto W_{ij}(\Omega)$ must be non-zero and take the same sign for every $\Omega \in C$. Now fix a value $i_0$ for $i$ and note that for the finitely many values of $i,j$, the signs of $W_{ij}$ on $C$ are all positive or not all positive according as the class $\tau_Z(\Omega,L_{i_0})$ is Kähler or not, according as $\Omega \in \mathcal U_{i_0}$ or not, and this proves the claim.
\end{proof}

\noindent {Finally, to justify the terminology of `wall-chamber' decomposition that we have used we explain why the loci $$W_{ij}(\Omega) = 0$$ in the set $\mathcal U_i$ (in the notation of Corollary \ref{Cor Z-critical structure}) are real codimension one loci. More precisely, we have the following proposition.
\begin{proposition}\label{Cor Z crit codim 1}
Let $L$ be a holomorphic line bundle and let $V$ denote the set comprising stability data $\Omega = (\beta,\rho,U) \in \mathcal C_X \times (\mathbb C^*)^3 \times \bigoplus H^{i,i}(X,\mathbb R)$ for which $Z_\Omega (L)$ lies in the upper half-plane, $\varphi_\Omega (L) \neq \arg(\pm \rho_0)$ and $V(\Omega,L) > 0$. Let $E$ be any curve on $X$ such that $$\int_E \tau_Z(\Omega,L) = 0$$ for some $\Omega \in V$. Then the locus $$W_E = \left\{ \Omega \in V \ | \ \int_E \tau_Z (\Omega, L) = 0\right\}$$ is a real codimension one submanifold of $V$.
\end{proposition} \begin{proof}
    Recall that the class $\tau_Z(\Omega,L)$ is given, up to a sign, by $$\tau_Z = \pm\left(c_1(L) + \frac{1}{2}\eta(\Omega,L)\right),$$ where $$\eta(\Omega,L) = \frac{2}{c_0}\left(c_0U_1+c_1\beta\right)=2\left(U_1+\frac{c_1}{c_0}\beta\right)$$ and $c_k = c_k(\Omega, L) = \operatorname{Im}(\rho_k)\cot\varphi_{\Omega}(L) - \operatorname{Re}(\rho_k).$ Recall that $c_0 \neq 0$; this is a consequence of the hyphotesis that  $\varphi_\Omega(L)\neq \arg(\pm\rho_0)$. Fix $$\Omega = (\beta, \rho_0 + \rho_1 t + \rho_2 t^2,U)$$ such that $$\int_E \tau_Z(\Omega,L) = 0$$ for some curve $E$ and consider the family of stability data given by $$\Omega_\varepsilon := \left(\beta,\rho_0+\rho_1 t + \rho_2 t^2,U + \frac{\varepsilon}{\int_X \beta^2}\beta^2\right)$$ for $\varepsilon \in \mathbb R$ small. Then, we have $$Z_{\Omega_\varepsilon}(L) = Z_\Omega(L) + \frac{\varepsilon}{\int_X \beta^2}\int_X \beta^2 \cdot(\rho_0 + \rho_1 \beta + \rho_2 \beta^2)\cdot ch(L) = Z_\Omega(L) + \varepsilon \rho_0.$$ Writing $a:= \operatorname{Re}(\rho_0), b:= \operatorname{Im}(\rho_0)$, we get that $$\frac{d}{d\varepsilon}\Bigr|_{\varepsilon=0}\cot\varphi_{\Omega_\varepsilon}(L)= \frac{a\operatorname{Im}Z_\Omega(L) - b\operatorname{Re}Z_\Omega(L)}{(\operatorname{Im}Z_\Omega(L))^2}=-\frac{c_0}{\operatorname{Im}Z_\Omega(L)}$$ recalling that $c_0 = b \cot \varphi_\Omega(L) - a \neq 0$. (Throughout, we write $c_k = c_k(\Omega,L)$ for brevity, reserving the more elaborate notation $c_k(\Omega_\varepsilon,L)$ for the perturbed constants.) A simple calculation now shows that $$\frac{d}{d\varepsilon}\Bigr|_{\varepsilon=0}\frac{c_1(\Omega_\varepsilon,L)}{c_0(\Omega_\varepsilon,L)} = -\frac{1}{c_0 \operatorname{Im}Z_\Omega(L)}\left(c_0 \operatorname{Im}\rho_1 - c_1 \operatorname{Im}\rho_0\right) = -\frac{1}{c_0 \operatorname{Im}Z_\Omega(L)}\operatorname{Im}\left(\frac{\rho_0}{\rho_1}\right)\neq 0$$ the last inequality following from our hypotheses and the very definition of a stability datum (see \eqref{Eqn stab vector phases}). This shows that the function $$f(\varepsilon):= \int_E \tau_Z(\Omega_\varepsilon,L)$$ satisfies $$f^\prime(0) =\frac{d}{d\varepsilon}\Bigr|_{\varepsilon=0}\left( \int_E c_1(L) + U_1 + \frac{c_1(\Omega_\varepsilon,L)}{c_0(\Omega_\varepsilon,L)}\beta \right) = -\frac{1}{c_0 \operatorname{Im}Z_\Omega(L)}\operatorname{Im}\left(\frac{\rho_0}{\rho_1}\right)\int_E \beta \neq 0$$ and hence zero is a regular value of the map $$\Omega \mapsto \int_E \tau_Z(\Omega,L).$$
\end{proof}
\begin{remark}[Counter-example to globally finite wall-chamber decomposition] It is worth remarking that this wall-chamber decomposition cannot in general be \emph{globally} finite. Indeed, there exist Kähler surfaces which admit infinitely many distinct curve classes with negative self-intersection. (For example, the blowup of $\mathbb P^2$ in nine points in general position has infinitely many smooth rational curves of self-intersection $-1$.) If $X$ is any such surface, and $E$ is any curve on $X$ with negative self-intersection, then consider the family of stability data given by $$\Omega_r := \left(\beta, \frac{r}{E^2} \i - \frac{1}{\int_E \beta}t + \i t^2, 1 + [E] 
\right)$$ where $\beta$ is any Kähler class on $X$ with $\int_X \beta^2 = 1$ and $r > 0$ is a positive constant whose value we shall vary in the range $r \in (0, 2]$. (In particular, we have set $U_2 = 0$.) We wish to consider the Z-critical equations associated to this family of stability data on the \emph{trivial} line bundle $L = \mathcal O_X$. One verifies quite easily that $$Z_{\Omega_r}(L) = -1 + \i$$ independently of $r$ and therefore $\varphi_{\Omega_r}(L) = \frac{3\pi}{4} \neq \arg(\pm\frac{2}{3E^2}\i) = \pm\frac{\pi}{2}$. A straightforward calculation then shows that $$V(\Omega_r,L) = \frac{E^2(r-2)}{r}+\left(\frac{E^2}{r\int_E \beta}\right)^2 > 0$$ as $r\in (0,2].$ In other words, all the hypotheses of Theorem \ref{Thm Main Z Critical Proof} are satisfied. However, another straightforward calculation shows that $$\int_E \tau_Z(\Omega_r,L) = \frac{E^2(1-r)}{r},$$ so the assignment $\Omega_r \mapsto \int_E \tau_Z(\Omega_r,L)$ changes sign as $r$ crosses the value $r=1$. In other words, the stability data $\Omega_r$ cross the wall $$W_E = \left\{ \Omega \ | \ \int_E \tau_Z(\Omega,L) = 0 \right\}$$ defined by the curve $E$. Thus, every curve of negative self-intersection gives rise to a wall which has non-empty intersection with the space of admissible stability data for the trivial bundle, and there are therefore infinitely many distinct walls whenever there are infinitely many distinct curve classes with negative self-intersection.\end{remark}}

\bigskip

\section{Example of the wall-chamber decomposition: Blowup of $\mathbb{P}^2$ in two points} \label{Section example}

\noindent In this section, we present a simple illustration of the results from previous sections, in particular the wall-chamber decomposition of Corollary \ref{Cor Z-critical structure}. To this end, consider the blowup $\pi:X\to \mathbb P^2$ of $\mathbb P^2$ in two distinct points $p_1, p_2 \in \mathbb P^2$. Let $H$ denote the pullback of a line and $E_i = \pi^{-1}(p_i), i = 1,2$ the exceptional curves of the blowup. Then $H^{1,1}(X,\RR)$ is spanned by (the classes of) $H, E_1,E_2$, and it is easy to check that $X$ admits precisely three curves of negative self-intersection, namely $E_1, E_2$ and $T$ - the strict transform of the unique line passing through $p_1$ and $p_2$, whose class in cohomology is equal to $H-E_1-E_2$. (By the usual abuse of notation, we identify $E_i$ with $c_1(\mathcal O_X(E_i))$ etc.) For $s$ a complex number lying in the upper half-plane, let $\Omega_s$ be the stability datum given by the triple
$$
\Omega_{s}= (\beta,U,\rho_s(t)) := \left(3H-E_1-E_2, 1, 1 -st + \frac{s^2t^2}{2}\right).
$$ 
This stability datum satisfies all the requirements to define a polynomial central charge, except possibly the condition $\operatorname{Im}(s^2)>0$, which is merely a normalisation condition. After fixing a choice of Kähler metric $\omega \in \beta$, one checks that solving the $Z_{\Omega_s}$-critical equation on a line bundle $L$ on $X$ is equivalent to solving the equation 
\begin{equation}\label{eqn:example:CMA}
    \left(\chi_h - a\omega + \frac{b^2\beta^2-(c_1(L)-a\beta)^2}{2\beta \cdot (c_1(L)-a\beta)}\omega\right)^2 = b^2\left(1 + \left(\frac{b^2\beta^2-(c_1(L)-a\beta)^2}{2\beta\cdot (c_1(L)-a\beta)}\right)^2\right)\omega^2
\end{equation}

\noindent for a Hermitian metric $h$ on $L$. Here $a = \operatorname{Re}(s)$, $b = \operatorname{Im}(s)$, and $\chi_h$ is the curvature form $\frac{\i}{2\pi}F_h$ of the metric $h$. (We will furthermore require that $\beta\cdot (c_1(L)-a\beta) > 0$ for all line bundles $L$ under consideration.) In particular, we see immediately that the volume form hypothesis is always satisfied. In fact, the equation \eqref{eqn:example:CMA} is equivalent to a dHYM equation for the class $c_1(L)-a\beta$ with auxiliary Kähler metric $b\omega$.

Let $L_1 = \mathcal{O}_X(E_1)$ and $L_2 = \mathcal O_X(T)$. Then, one checks that for $a < \frac{1}{7}$, we have $\beta\cdot(c_1(L_i)-a\beta) > 0$ for $i=1,2$. Moreover, from \eqref{eqn:example:CMA} we can solve the $Z_{\Omega_s}$-critical equation on $L_i$ if and only if
$$
\tau(L_i,\Omega_s) := c_1(L_i)-a\beta + \frac{b^2\beta^2-(c_1(L_i)-a\beta)^2}{2\beta \cdot (c_1(L_i)-a\beta)}\beta
$$
is a Kähler class. (The class $\tau(L_i,\Omega_2)$ can never be the negative of a Kähler class, because for $b$ large and positive, it is clearly Kähler and $s\mapsto \tau(L_i,\Omega_s)$ is a continuous mapping into the disjoint union $\pm\mathcal P^+_X$.) A straightforward calculation (using the fact that $E_1$, $E_2$ and $T$ are the only curves of negative self-intersection) then shows that for $i = 1$ this happens precisely when 
$$
W_1(\Omega_s):= \int_{E_1}\tau(L_1,\Omega_s) = \frac{7b^2 + 8a}{1-7a} > 0
$$
and for $i=2$ if and only if
$$
W_2(\Omega_s):=\int_{T}\tau(L_2,\Omega_s) = \frac{7b^2 + 4(a+1)^2 -8}{5-11a} > 0.
$$
This gives us a two-dimensional local slice of the wall-chamber decomposition as shown in the figure below.
\begin{figure}[h!]
\centering
\begin{tikzpicture}[scale=2.5]
  \draw[<->] (-3.5, 0) -- (0.25, 0) node[right] {$\operatorname{Re}(s)$};
  \draw[<->] (0, -0.25) -- (0, 2) node[above] {$\operatorname{Im}(s)$};
  \draw[dashed] (1/7,0) node[below]{$\frac{1}{7}$} -- (1/7,2) ;
  \draw[ domain=36:180, smooth, variable=\a, line width = 0.3 mm] plot ({sqrt(2)*cos(\a)-1}, {sqrt(8/7)*sin(\a)});
  \draw[domain=0:2, smooth, variable=\b, line width = 0.3 mm]  plot ({-7*\b*\b/8}, {\b});
  \draw (-0.15,0.65) node[scale = 1]{(I)};
  \draw (-2.5,1) node[scale = 1]{(II)};
  \draw (-1,1.5) node[scale = 1]{(III)};
  \draw (-1.2,0.5) node[scale = 1]{(IV)};
  \draw (1/6,0.634) node[right]{$W_2(\Omega_s)=0$};
  \draw (-2.05,1.65) node[above]{$W_1(\Omega_s)=0$};
\end{tikzpicture}
\caption{{Wall-chamber decomposition of a slice of the `space of Z-critical equations' on $\mathrm{Bl}_{p_1,p_2}\mathbb{P}^2$, with chambers such that} (I): only $L_1$ is $Z_{\Omega_s}$-stable. (II): only $L_2$ is $Z_{\Omega_s}$-stable. (III): both $L_1$ and $L_2$ are $Z_{\Omega_s}$-stable. (IV): neither $L_1$ nor $L_2$ is $Z_{\Omega_s}$-stable.}\label{fig:example}
\end{figure}
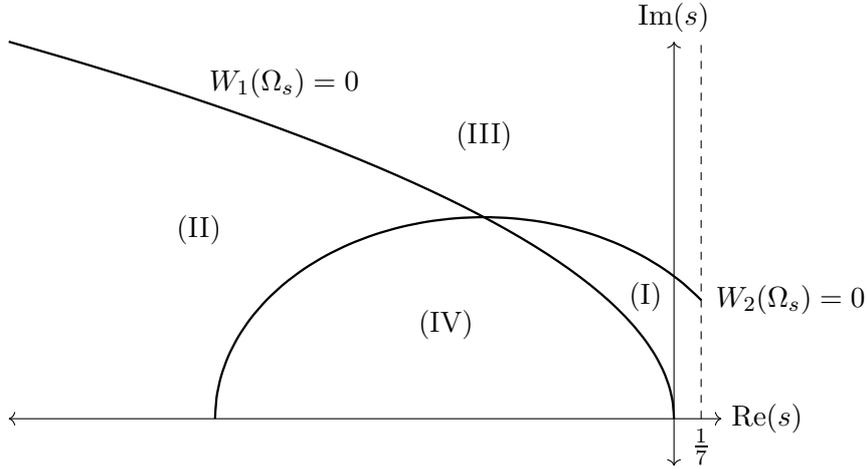

\begin{remark} Clearly one can produce many more examples and carry out a more general analysis of the above decomposition without much additional difficulty, in particular by making different choices for varying the stability datum $\Omega$. Other examples can be treated similarly as long as the boundary of the nef cone of $X$ is sufficiently well understood. We leave a more systematic study of similar and other examples to future work.
\end{remark}

\section{The special case of the J-equation: Finite set of optimal destabilizers in the small volume limit and applications} \label{Section J}

\noindent The J-equation is realized as the so-called \emph{small volume limit} of the deformed Hermitian Yang-Mills equation, and can thereby be seen as a simpler special case of the former. It is however interesting in its own right, as it is also closely connected to other geometric PDE of interest in K\"ahler geometry, such as the constant scalar curvature equation. Below we comment on this `special case' in some further detail, including optimal destabilizing curves for the J-equation, and test configurations, with relation to J-stability. 

In the following paragraphs we assume that $(X,\omega)$ is a compact K\"ahler surface with $\theta$ an auxiliary K\"ahler form on $X$, and denote the associated K\"ahler classes by $\alpha := [\omega]$ and $\beta := [\theta]$ respectively. These notations are sometimes used interchangeably, depending on whether we wish to emphasize the class or the underlying $(1,1)$-form.
Let moreover $c := c_{\theta,\omega} \in \mathbb{R}$ be the unique cohomological constant defined by the relation 
$$\int_X 2\theta \wedge \omega - c\omega^2 = 0.$$

For the sequel we moreover use the following terminology, which is a variant of Definition \ref{def sets of destabilizers} for the J-equation:  

\begin{definition} 
 The \emph{set of J-destabilizing curves for the pair $(\theta,\omega)$} is defined as
$$
\mathcal D_{\theta,\omega} := \mathcal D_{\tau_{\theta,\omega}} := \left\{E \subseteq X \; \mathrm{irreducible} \; \mathrm{curve} : 
\int_E \tau_{\theta,\omega} \leq 0 \right\}.
$$
where $\tau_{\theta,\omega}$ is the class $$ \tau_{\theta,\omega} := 2\frac{\int_X \omega\wedge\theta}{\int_X \omega^2}[\omega] - [\theta]. $$

\end{definition}

\begin{remark} Note that
\begin{enumerate}
    \item By Theorem \ref{Thm main Jequation} below the set $\mathcal D_{\theta,\omega}$ is finite.
    \item The form $\tau_{\theta,\omega}$ of course only depends on $[\theta]$ and $[\omega]$ in $H^{1,1}(X,\mathbb{R})$. Brackets are often removed in indices to alleviate notation.
\end{enumerate}
\end{remark}

\medskip

\subsection{Proof of the Main Theorem for the J-equation}

We then have the following main theorem for the J-equation. While the idea is the same as for Theorems \ref{Thm main dHYM intro} and \ref{Thm Main Theorem Z Critical Equation}, it requires a separate proof. 

\begin{theorem} \label{Thm main Jequation}
Let $X$ be a compact K\"ahler surface and fix any compact subset $K \subseteq \mathcal{C}_X \times \mathcal{C}_X$.
Then there exists a finite set $\mathcal{V}_K$ of curves of negative self-intersection $E_1, \dots, E_{\ell}$ on $X$ (depending only on $K$) such that the following are equivalent: 
\begin{enumerate}
    \item 
    For any Kähler metric $\theta \in \beta$, 
    there exists a Kähler metric $\omega \in \alpha$ solving the J-equation  \begin{equation} \label{Eq J}
    \mathrm{Tr}_{\omega}\theta = c.
    \end{equation}
    \item For all irreducible curves $E \subseteq X$ we have
    $$ 
    c \int_{E} \omega - \int_{E} \theta  > 0 
    $$
    for any smooth forms $\omega \in \alpha$ and $\theta \in \beta.$
    \item For all $E \in \mathcal{V}_K$ we have
    $$ 
    c \int_{E} \omega - \int_{E} \theta  > 0 
    $$ 
    for any smooth forms $\omega \in \alpha$ and $\theta \in \beta.$
\end{enumerate}
Moreover, if \eqref{Eq J} does not admit a solution and $
    c\int_{E} \omega - \int_{E} \theta \leq 0 
    $, then $E = E_i$ for some $i \in \left\{1,\dots,{\ell}\right\}$.
\end{theorem}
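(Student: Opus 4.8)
The plan is to reproduce, essentially verbatim, the proof of Theorem \ref{Thm Main dHYM Proof} for the deformed Hermitian Yang--Mills equation, of which the J-equation is the large-volume limit, now with the class $\tau_{\theta,\omega}=c\alpha-\beta$ playing the role of $\tau(\beta,\alpha)$. First I would check that $\tau_{\theta,\omega}$ always lies in the positive cone: since the definition of $c$ gives $c=\tfrac{2\int_X\alpha\cdot\beta}{\int_X\alpha^2}$, a one-line computation yields $\int_X\tau_{\theta,\omega}^2=\int_X\beta^2>0$ and $\int_X\tau_{\theta,\omega}\cdot\alpha=\int_X\alpha\cdot\beta>0$, so $\tau_{\theta,\omega}\in\mathcal P^+_X\subseteq\mathcal B_X$ by Lemma \ref{Lemma positive inside big}. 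The assignment $(\beta,\alpha)\mapsto\tau_{\theta,\omega}$ is continuous on $\mathcal C_X\times\mathcal C_X$, so it carries the compact set $K$ to a compact subset $\tilde K\subseteq\mathcal P^+_X$; by Lemma \ref{Lemma convex hull} this $\tilde K$ lies in the convex hull of finitely many points of $\mathcal P^+_X$, and then Lemma \ref{Lem cardinality bound cvx hull} furnishes the finite family $\mathcal V_K=\bigcup_{\tau\in\tilde K}\mathcal D_\tau=\{E_1,\dots,E_\ell\}$ of curves of negative self-intersection, depending only on $K$, with the property that a class $\tau\in\tilde K$ is Kähler if and only if $\int_{E_i}\tau>0$ for every $i$. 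The cardinality bounds $\ell\le\rho(X)$, respectively $\ell\le\rho(X)-1$ in the projective case, are inherited from Proposition \ref{Prop meta}.

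It then remains only to identify solvability of \eqref{Eq J} with Kählerness of $\tau_{\theta,\omega}$, after which $(1)\Rightarrow(2)\Rightarrow(3)$ is immediate (a Kähler class pairs strictly positively with every curve $E$, which after dividing by $\int_E\omega>0$ is exactly condition (2), and $(2)\Rightarrow(3)$ is trivial), while $(3)\Rightarrow(1)$ follows from the defining property of $\mathcal V_K$ together with the reverse reduction. For $(1)\Rightarrow$``$\tau_{\theta,\omega}$ is Kähler'': if $\omega\in\alpha$ is a Kähler metric solving $\mathrm{Tr}_\omega\theta=c$ with $\theta$ Kähler, the eigenvalues $\lambda_1,\lambda_2$ of $\theta$ relative to $\omega$ are strictly positive with $\lambda_1+\lambda_2=c$, hence each is strictly smaller than $c$; therefore $c\omega-\theta$ is a positive $(1,1)$-form representing $\tau_{\theta,\omega}=c\alpha-\beta$, so this class is Kähler. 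Conversely, if $\tau_{\theta,\omega}$ is Kähler, then by Yau's solution of the Calabi conjecture \cite{Yau} there is a Kähler form $\chi\in\tau_{\theta,\omega}$ with $\chi^2=\theta^2$ --- admissible precisely because $\int_X\tau_{\theta,\omega}^2=\int_X\beta^2$ --- and setting $\omega:=\tfrac{1}{c}(\chi+\theta)\in\alpha$ a short computation using $\chi^2=\theta^2$ and $c\omega=\chi+\theta$ gives $c\omega^2=2\theta\wedge\omega$, i.e. $\mathrm{Tr}_\omega\theta=c$. (Alternatively, this equivalence can be quoted directly from the existence results of \cite{CollinsGabor,GaoChen,DatarPingali,Song} together with the Nakai--Moishezon criterion of Lamari \cite{Lamari}.)

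For the final assertion, suppose \eqref{Eq J} has no solution, so $\tau_{\theta,\omega}$ is big but not Kähler. By Proposition \ref{Prop meta} the irreducible curves $E$ with $\int_E\tau_{\theta,\omega}\le 0$ --- equivalently those with $\int_E\theta\ge c\int_E\omega$ --- form a finite set with negative-definite intersection matrix and linearly independent classes; moreover, since $(\beta,\alpha)\in K$, this set equals $\mathcal D_{\tau_{\theta,\omega}}$ and is contained in $\mathcal V_K=\{E_1,\dots,E_\ell\}$ by Lemma \ref{Lem cardinality bound cvx hull}. Hence every J-destabilizing curve for the pair $(\theta,\omega)$ occurs among the $E_i$, which is what the last sentence asserts.

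Since Sections \ref{Section prelim} and \ref{Section main thms} already contain all the heavy machinery (the Zariski decomposition, Proposition \ref{Prop meta}, and the convexity lemmas), I do not anticipate a genuine obstacle here: the only two points requiring any care are the verification that $\tau_{\theta,\omega}\in\mathcal P^+_X$ and the completing-the-square reduction of \eqref{Eq J} to a complex Monge--Ampère equation, both routine and strictly parallel to the dHYM case treated in Theorem \ref{Thm Main dHYM Proof}.
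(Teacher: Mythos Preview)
Your proposal is correct and follows the paper's argument essentially verbatim: verify $\tau_{\theta,\omega}\in\mathcal P^+_X$, push $K$ forward by the continuous map $(\beta,\alpha)\mapsto\tau_{\theta,\omega}$, and apply Lemmas \ref{Lemma convex hull} and \ref{Lem cardinality bound cvx hull}. The only difference is that the paper quotes \cite[Theorem 2]{Chen2000} for the equivalence between solvability of \eqref{Eq J} and K\"ahlerness of $\tau_{\theta,\omega}$, whereas you spell it out directly via eigenvalues and Yau's theorem, mirroring the dHYM argument in Theorem \ref{Thm Main dHYM Proof}; both routes are fine.

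One small slip: the bound $\ell\le\rho(X)$ you attribute to Proposition \ref{Prop meta} holds for a \emph{single} $\mathcal D_\tau$, not for the union $\mathcal V_K=\bigcup_{\tau\in\tilde K}\mathcal D_\tau$, which Lemma \ref{Lem cardinality bound cvx hull} only bounds by $k\rho(X)$. Since the theorem as stated asserts no explicit cardinality bound, this does not affect the proof.
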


\begin{remark} Here
\begin{enumerate}
    \item $(1) \Leftrightarrow (2)$ is the solution of a conjecture of Lejmi-Sz\'ekelyhidi \cite{LejmiGabor}, recently proven in \cite{DatarPingali, Song}.
    \item The equivalence $(2) \Leftrightarrow (3)$ is more generally valid for smooth $(1,1)$-forms $\theta$ for which $\beta := [\theta]$ is in the open convex cone
$$ \mathcal P_X^+ = \left\{\gamma \in H^{1,1}(X,\mathbb R) \ | \ \gamma \cdot \alpha > 0, \gamma^2 > 0\right\},
$$
as explained in the proof below. 
\end{enumerate}
\end{remark}

\begin{proof}[Proof of Theorem \ref{Thm main Jequation}]
That $(2) \Rightarrow (3)$ is obvious. It remains to prove that $(3) \Rightarrow (2)$. To this end, let $\omega$ be a Kähler form on a compact Kähler surface $X$ with cohomology class $\alpha = [\omega] \in \mathcal C_X \subseteq H^{1,1}(X,\mathbb R)$, and let $\theta$ be any smooth real (1,1)-form on $X$ such that its class $\beta = [\theta]$ lies in $\mathcal P_X^+$.
We then observe that if $$ \tau := \frac{2 \int_X \theta\wedge\omega}{\int_X \omega^2} [\omega] - [\theta] $$ then we have $\tau\cdot [\omega] = [\theta]\cdot[\omega] > 0$ and $\tau^2 = [\theta]^2 > 0$, so $\tau \in \mathcal P_X^+$. Suppose moreover that equation \eqref{Eq J} does not admit a solution with respect to $(\theta,\omega)$. By \cite[Theorem 2]{Chen2000} this is equivalent to $\tau$ not being a Kähler class. By Lemma \ref{Lemma positive inside big} we however have $\mathcal B_X \supseteq  \mathcal P_X^+$, and hence $\tau \in \mathcal{B}_X \setminus \mathcal{C}_X$. Then, by Proposition \ref{Prop meta}, there exist at most finitely many curves $E \subseteq X$ such that $$ \int_E \tau \leq 0.$$ 

\noindent Finally, the image of the compact set $K$ under the map 
$$
\mathcal{P}_X^+ \times \mathcal{C}_X \ni ([\theta],[\omega]) \mapsto \tau_{\theta,\omega} := c_{\theta,\omega}[\omega] - [\theta] \in \mathcal{P}_X^+
$$
is clearly compact, so by Lemma \ref{Lemma convex hull} it is contained in the convex hull of a finite number of classes $\alpha_1, \dots, \alpha_k$. By Lemma \ref{Lem cardinality bound cvx hull} this implies that $|\mathcal{V}_K| \leq k\rho(X)$. Finally, the fact that any J-destabilizing curve must already appear in the list $\left\{E_1, \dots, E_{\ell}\right\}$ is due to Proposition \ref{Prop meta}. Since clearly $\mathcal{C}_X \subseteq \mathcal{P}_X^+$ this finishes the proof. 
\end{proof} 

\noindent As a direct consequence of the above proof, we highlight the following more precise cardinality bound: 

\begin{corollary}
In the notation of the proof of Theorem \ref{Thm main Jequation}, suppose that the image of $K$ under the map 
$$
\mathcal{P}_X^+ \times \mathcal{C}_X \ni ([\theta],[\omega]) \mapsto \tau_{\theta,\omega} := c_{\theta,\omega}[\omega] - [\theta] \in \mathcal{P}_X^+
$$
is contained in the convex hull of $k$ pseudoeffective classes. Then 
$$
\ell = |\mathcal{V}_K| \leq k\rho(X)
$$
\end{corollary}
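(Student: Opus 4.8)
The plan is to observe that this Corollary is simply the quantitative form of the cardinality bound already extracted inside the proof of Theorem \ref{Thm main Jequation}, specialised to the case where the relevant compact set is known to lie in the convex hull of exactly $k$ pseudoeffective classes. So the whole argument is a matter of unwinding notation and invoking Lemma \ref{Lem cardinality bound cvx hull} directly.

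First I would recall the setup from the proof of Theorem \ref{Thm main Jequation}: the finite set $\mathcal V_K = \{E_1,\dots,E_\ell\}$ is produced as (a relabelling of) the set $\mathcal S_{\tilde K} := \bigcup_{\tau \in \tilde K} \mathcal D_\tau$ of candidate destabilisers, where $\tilde K$ is the image of $K$ under the continuous map $([\theta],[\omega]) \mapsto \tau_{\theta,\omega} = c_{\theta,\omega}[\omega]-[\theta]$. As noted there, one has $\tau_{\theta,\omega}\cdot[\omega] = [\theta]\cdot[\omega] > 0$ and $\tau_{\theta,\omega}^2 = [\theta]^2 > 0$, so $\tilde K \subseteq \mathcal P_X^+$, and hence $\tilde K \subseteq \mathcal B_X$ by Lemma \ref{Lemma positive inside big}; moreover $\tilde K$ is compact. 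Next, by hypothesis $\tilde K$ is contained in the convex hull of $k$ pseudoeffective classes $\tau_1,\dots,\tau_k$, so applying Lemma \ref{Lem cardinality bound cvx hull} with these $\tau_i$ yields at once that $\mathcal S_{\tilde K}$ is a set of curves of negative self-intersection with $|\mathcal S_{\tilde K}| \leq k\rho(X)$ (and $\leq k\rho(X)-k$ if $X$ is projective). Since $\ell = |\mathcal V_K| = |\mathcal S_{\tilde K}|$, the asserted bound follows.

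I do not expect any genuine obstacle here; the content is a bookkeeping consequence of Lemma \ref{Lem cardinality bound cvx hull} and Proposition \ref{Prop meta}. The only point deserving a sentence of care is that Lemma \ref{Lem cardinality bound cvx hull} is phrased for subsets of the positive cone over a \emph{strict} convex hull, whereas the hypothesis here is containment in the ordinary convex hull. This makes no difference: if $\tau = \sum_i a_i \tau_i$ with some coefficients $a_i$ equal to zero, one simply discards those terms and runs the same argument with the remaining strictly positive coefficients, which can only decrease the bound; and in any case $k\rho(X)$ is merely an upper bound. Thus the proof reduces to the single line that $\mathcal V_K = \mathcal S_{\tilde K}$ and Lemma \ref{Lem cardinality bound cvx hull} applies to $\tilde K$.
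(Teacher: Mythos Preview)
Your proposal is correct and matches the paper's approach exactly: the paper does not even supply a separate proof for this Corollary, introducing it with ``In light of the above proof we can also say the following,'' since the bound $|\mathcal V_K|\leq k\rho(X)$ is precisely the content of the final paragraph of the proof of Theorem \ref{Thm main Jequation}, specialised to the case where the number $k$ of pseudoeffective classes whose convex hull contains $\tilde K$ is given as a hypothesis rather than extracted from Lemma \ref{Lemma convex hull}. Your additional remark on strict versus ordinary convex hulls is a harmless clarification.
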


\begin{remark} \emph{(Perturbation of $[\omega]$ and blowups)} The above shows a certain robustness of the set of destabilizing curves when varying the initial data $([\theta],[\omega])$. Indeed, if we consider 
$
\mathcal S(\theta,\omega) := \mathrm{Neg}(\tau_{\theta,\omega}),
$
which is a set of candidate destabilizing curves (as indeed $\mathcal{S}(\theta,\omega) \subseteq \mathcal{D}_{\tau_{\theta,\omega}}$ in the sense of Definition \ref{def sets of destabilizers}), then 
we record the following elementary consequences of analogous properties for the Zariski decomposition (see Theorem \ref{Thm Zariski decomposition}):
Suppose that $([\theta],[\omega]) \in \mathcal{P}_X^+ \times \mathcal{C}_X$. Then for any $\epsilon > 0$ we have
$$
\mathcal{S}(\theta + \epsilon\omega,\omega) \subseteq \mathcal{S}(\theta,\omega).
$$
Moreover
$$
\mathcal{S}(\theta,r\omega) = \mathcal{S}(\theta,\omega)
$$
for any $r > 0$, and if $\pi:Y_p \rightarrow X$ is the blowup of $X$ at the point $p \in X$, with exceptional divisor $E$, then $$
\mathcal{S}(\pi^*\theta,\pi^*\omega) = \widetilde{\mathcal{S}(\theta,\omega)} \cup \left\{E\right\}.
$$
where $\widetilde{\mathcal{S}}$ denotes the strict transform of $\mathcal{S}$.
The first part follows immediately from Theorem \ref{Thm Zariski decomposition}, as well as the observation that $\tau_{\theta + \epsilon\omega,\omega} = \tau_{\theta,\omega} + \epsilon$ and $\tau_{\theta,r\omega} = \tau_{\theta,\omega}$. The second part is Lemma \ref{Lem Zariski Birational}.
This can also be seen from the point of view of Siu decompositions (see discussion in Remark \ref{Rmk Discussion Meta prop}), by noting that the Siu decomposition can absorb small perturbations of $\alpha$ (and therefore the same set $\mathcal{S}$ can be taken across a small enough neighbourhood). However, realizing the behaviour across birational transformations is a convenient feature of the Zariski decomposition.

In particular, this gives a good control of the set of J-destabilizing curves as we vary the underlying classes $([\theta],[\omega]) \in K \subseteq \mathcal{C}_X \times \mathcal{C}_X$, but also as we modify our surface $X$ via certain birational transformations. Applications of this latter point would be interesting to explore further in the future.
\end{remark}

\subsection{Optimally destabilizing curves for the J-equation and deformed Hermitian Yang-Mills equation} \label{Subsection optimal dest Jstab}

Apart from studying the set of all destabilizing curves, it is furthermore natural to ask if it is possible to determine when \emph{optimally} destabilizing curves exist (in the sense of realizing a suitable infimum), and if so how they may be characterized. As in \cite{SD4}, consider 
\begin{equation} \label{Eq deltaNM}
\Delta_{\mathrm{NM}}(\theta,\omega) := \inf_{E \subseteq X} \left(c_{\theta,\omega} - \frac{\int_E \theta}{\int_E \omega}\right),
\end{equation}
(so that $\Delta_{\mathrm{NM}}(\theta,\omega) > 0$ if and only if the J-equation \eqref{Eq J} is solvable with respect to $(\theta,\omega)$), and write
$$
\mathcal{D}^{\mathrm{J}}_{\theta,\omega} := \left\{ E \subseteq X \textrm{ irreducible curve } | \ \Delta_{\mathrm{NM}}(\theta,\omega) = c_{\theta,\omega} - \frac{\int_E \theta}{\int_E \omega} \leq 0 \right\}
$$
for the (possibly empty) set of optimally J-destabilizing curves, i.e. curves $E \subseteq X$ that destabilize and precisely realize the infimum in \eqref{Eq deltaNM}. As a first elementary observation we note that the above set is invariant under changing $\theta$ linearly along paths $\theta_t := (1-t)\omega + t\theta$, $t \geq 0$, as long as $\Delta_{\mathrm{NM}}(\theta_t,\omega) \leq 0$: 

\begin{lemma}
Along linear perturbations $\theta_t := (1-t)\omega + t\theta$, $t \geq 0$, the set $\mathcal{D}^{\mathrm{J}}_{\theta_t,\omega}$ is independent of $t \in [0,+\infty)$. It is non-empty precisely when $\Delta_{\mathrm{NM}}(\theta,\omega) \leq 0$.
\end{lemma} 

\begin{proof}
The independence in $t$ is an immediate consequence of the linearity of the function $[0,+\infty) \ni t \mapsto \Delta^{\mathrm{pp}}_{\theta_t}(\omega)$, see \cite[Lemma 15]{SD4}, and the straightforward observation that every curve optimally destabilizes at $t = 0$. The non-emptyness is an application of Theorem \ref{Thm main Jequation}, as the infimum over a finite set is always realized.
\end{proof}

\noindent We can further describe the set of J-optimal destabilizers as follows. The argument is purely cohomological, but to alleviate notation we omit brackets in indices: Let $\theta$ and $\omega$ be K\"ahler forms on $X$ and define 
$$
\mathbf{a} := \mathbf{a}([\theta],[\omega]) \in H^{1,1}(X,\mathbb{R})
$$ 
as the unique nef class such that 
\begin{enumerate}
    \item The classes $[\omega_t] := (1-t)\mathbf{a} + t[\theta]$ are K\"ahler for all $t \in [0,1)$, 
     \item $\mathbf{a} = [\omega_1]$ is not K\"ahler, and 
    \item $[\omega] = [\omega_t]$ for some $t \in (0,1)$.
\end{enumerate}

\noindent Geometrically, $\mathbf{a}$ is thus the first non-K\"ahler, but big, $(1,1)$-class following a given direction in the K\"ahler cone (in case $[\theta] = K_X$ this is often referred to as the \emph{nef threshold}). 
As this class $\mathbf{a}$ is big but not K\"ahler, the Nakai-Moishezon criterion implies that there must exist curves $C \subseteq X$ such that $\mathbf{a}.[C] = 0$. There is a finite number of such curves, and it turns out that they play a central role for both the dHYM equation and the J-equation, as described by the following result: 

\begin{proposition} \label{Prop optimally Jneg curves} \emph{(Optimally J-destabilizing curves)}
Let $([\theta],[\omega])$ be K\"ahler classes on $X$. 
In the above notation, if $\mathbf{a} := \mathbf{a}([\theta],[\omega]) \in H^{1,1}(X,\mathbb{R})$ is in $\mathcal{P}_X^+$ and  $\Delta_{\mathrm{NM}}(\theta,\omega) \leq 0$, then the set of optimally J-destabilizing curves is given by
$$
\mathcal{D}^{\mathrm{J}}_{\theta,\omega} = \left\{E \subseteq X \; \vert \; \int_E \mathbf{a} = 0 \right\},
 $$
and the cardinality 
$|\mathcal{D}^{\mathrm{J}}_{\theta,\omega}|$ 
does not exceed $\rho(X)$.
\end{proposition}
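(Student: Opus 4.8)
The plan is to recast the statement intersection-theoretically and to exploit that, by construction, $a=a([\theta],[\omega])$ is a \emph{nef} class lying on the segment through $[\theta]$ and $[\omega]$. As a preliminary, I would record for every irreducible curve $E$ the identity
\[
c_{\theta,\omega}-\frac{\int_E\theta}{\int_E\omega}=\frac{1}{\int_E\omega}\int_E\tau_{\theta,\omega},\qquad \tau_{\theta,\omega}=c_{\theta,\omega}[\omega]-[\theta],
\]
so that, writing $r_E:=\int_E\theta/\int_E\omega>0$, the number $c_{\theta,\omega}-r_E$ has the same sign as $\int_E\tau_{\theta,\omega}$ and $\Delta_{\mathrm{NM}}(\theta,\omega)=c_{\theta,\omega}-\sup_E r_E$. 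In particular, once the supremum $\sup_E r_E$ is attained, $\mathcal{S}^{\mathrm{J}}_{\theta,\omega}$ is exactly its set of maximisers when $\sup_E r_E\ge c_{\theta,\omega}$, and is empty otherwise. Finally, the defining properties of $a$ let us write $a=(1-u^{*})[\theta]+u^{*}[\omega]$ with $u^{*}>1$ (this is just a rescaling of property (3), which places $[\omega]$ strictly between $[\theta]$ and $a$ on the segment, so that the coefficient $u^{*}=1/(1-t_0)$ exceeds $1$).

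The crux is the following pair of observations. Since $a$ is nef, $\int_E a\ge 0$ for every irreducible curve $E$; expanding $\int_E a=(1-u^{*})\int_E\theta+u^{*}\int_E\omega$ and dividing by the positive number $(u^{*}-1)\int_E\omega$, this is equivalent to $r_E\le \frac{u^{*}}{u^{*}-1}$, with equality precisely when $\int_E a=0$. On the other hand $a$ is big (Lemma~\ref{Lemma positive inside big}, as $a\in\mathcal{P}^+_X$) and not Kähler, so the Nakai--Moishezon criterion in Lamari's form produces at least one irreducible curve $C$ with $\int_C a=0$; here the hypothesis $a\in\mathcal{P}^+_X$ is used precisely to ensure that the obstruction to Kähler-ness is such a curve rather than the degeneracy $a^2=0$. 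Combining the two points, $\sup_E r_E=\frac{u^{*}}{u^{*}-1}$, this supremum is attained, and its set of maximisers is exactly $\{\,E : \int_E a=0\,\}$.

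It remains to feed in the hypothesis $\Delta_{\mathrm{NM}}(\theta,\omega)\le 0$, which says exactly that $\sup_E r_E\ge c_{\theta,\omega}$; by the first paragraph this yields $\mathcal{S}^{\mathrm{J}}_{\theta,\omega}=\{\,E : \int_E a=0\,\}$ (in particular this set is non-empty, recovering the previous lemma). For the cardinality bound, note that each such $E$ is a non-zero class in $a^{\perp}$, and since $a^2>0$ the Hodge Index Theorem makes $a^{\perp}$ negative-definite for the intersection pairing; hence the classes $[E]$ have negative-definite, so non-singular, Gram matrix, are therefore linearly independent in $NS(X)_{\RR}$, and there are at most $\rho(X)$ of them. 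Alternatively, $\int_E\tau_{\theta,\omega}=(c_{\theta,\omega}-r_E)\int_E\omega\le 0$ shows these curves all lie in $\mathcal{D}_{\tau_{\theta,\omega}}$, so Proposition~\ref{Prop meta} applies verbatim.

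The only real obstacle is bookkeeping: being careful about the orientation and range of the parametrisation defining $a$ so that $u^{*}>1$ is genuinely forced, and making sure the hypothesis $a\in\mathcal{P}^+_X$ is invoked exactly where it is needed (to run Lamari's criterion and to apply Hodge index). No analysis enters; the content sits entirely in the interplay between nef-ness of $a$, the Nakai--Moishezon criterion, and the Lorentzian structure on $H^{1,1}(X,\RR)$.
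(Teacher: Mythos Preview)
Your argument is correct and follows the same underlying idea as the paper: identify the maximisers of $r_E=\int_E\theta/\int_E\omega$ with the curves on which $a$ vanishes, using that $a$ is nef (upper bound) and big but not K\"ahler (attainment). The paper reaches the key identity $\sup_E r_E = t_0^{-1}$ by citing \cite[Lemma~16]{SD4} and then invokes Theorem~\ref{Thm main Jequation} to know the infimum is realised; you instead unpack this directly via the affine relation $a=(1-u^*)[\theta]+u^*[\omega]$ and Lamari's criterion, which is more self-contained and makes the role of the hypothesis $a\in\mathcal{P}_X^+$ transparent. One small caveat: in your first cardinality argument, the claim that the Gram matrix of the $[E]$ in $a^{\perp}$ is negative-definite is not automatic from $a^{\perp}$ being negative-definite---one still needs the standard observation that distinct irreducible curves with $E_i\cdot E_j\ge 0$ for $i\neq j$ and $E_i^2<0$ have linearly independent classes---but your alternative route through $\mathcal{D}_{\tau_{\theta,\omega}}$ and Proposition~\ref{Prop meta} matches the paper's and is unproblematic.
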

\begin{proof}
The proof is purely cohomological. First note that along $(\theta,\omega_t)$ we have
$$
\Delta_{\mathrm{NM}}(\theta,\omega_t) = c_{\theta,\omega_t} - t^{-1},
$$
as follows from the computation in \cite[Lemma 16]{SD4}. 
On the other hand, it is a direct consequence of Theorem \ref{Thm main Jequation} that the above infimum can be taken over a finite set, and is thus realized by a curve $E \subseteq X$, i.e. 
$$
\Delta_{\mathrm{NM}}(\theta,\omega_t) = c_{\theta,\omega_t} - \frac{\int_E \theta}{\int_E \omega_t}.
$$
A necessary and sufficient condition for being optimally destabilizing is therefore that
$$
\int_E [\omega_t] = \int_E (1-t)\mathbf{a} + t[\theta] = t\int_E [\theta],
$$
which happens precisely if $\int_E \mathbf{a} = 0$.
The cardinality bound follows from Proposition \ref{Prop meta}, since 
$$
\mathcal{D}^{\mathrm{J}}_{\theta,\omega} = \left\{E \subseteq X \; \vert \; \mathbf{a}.[E] = 0 \right\} \subseteq \mathcal{S}(\theta,\omega).
$$
\end{proof}

\noindent In other words the curves that `most destabilize' the numerical condition relevant to the J-equation are precisely the curves that obstruct the boundary class $\mathbf{a} \in \partial \mathcal{C}_X$ from being K\"ahler, according to the Nakai-Moishezon criterion. This turns out to be a general truth coming from the dHYM equation (see below), and the above result will then illustrate that the set of optimally dHYM-destabiizing curves is preserved in the small volume limit, as $[\omega] \mapsto [k\omega]$, with $k \rightarrow +\infty$, in a suitable sense. 

To this end, we ask the analogous question for the dHYM equation, under the natural assumption that $[\omega]^2 - [\theta]^2 > 0$ (equivalent to the supercritical phase condition, see Remark \ref{Rmk dHYM phase}). Then let 
$$
\Delta_{\mathrm{dHYM}}(\theta,\omega) := \left\{ E \subseteq X \textrm{ irreducible curve } | \ \Delta_{\mathrm{dHYM}}(\theta,\omega) = (\widetilde{c_{\theta,\omega}})^{-1} - \frac{\int_E \theta}{\int_E \omega} \leq 0 \right\},
$$
where 
$$
\widetilde{c_{\theta,\omega}} := \frac{[\omega]^2 - [\theta^2]}{2[\theta].[\omega]}
$$
is a positive number under the above hypotheses. 
The same argument then applies also to the dHYM equation, and yields the following:

\begin{proposition} \label{Prop optimally dHYMneg curves} \emph{(Optimally dHYM-destabilizing curves)}
In the setup of Proposition \ref{Prop optimally Jneg curves}, suppose that $\mathbf{a}([\theta],[\omega]) \in \mathcal{P}_X^+$, $[\theta]^2 < [\omega]^2$, and  $\Delta_{\mathrm{dHYM}}(\theta,\omega) \leq 0$. Then the set of all optimally dHYM-destabilizing curves is  given by
$$
\mathcal{D}^{\mathrm{dHYM}}_{\theta,\omega} = \left\{E \subseteq X \; \vert \; \int_E \mathbf{a} = 0 \right\}. 
 $$
\end{proposition}

\begin{proof}
The proof is a repetition of that for Proposition \ref{Prop optimally Jneg curves}, with $c_{\theta,\omega}$ replaced by $(\widetilde{c_{\theta,\omega}})^{-1}$.
\end{proof}

\noindent In particular, whenever non-empty, the sets of J/dHYM destabilizing curves in fact \emph{coincide}.  

\begin{proof}[Proof of Theorem \ref{Thm opt J vs opt dHYM intro}]
The statement of the theorem is immediately deduced by combining Propositions \ref{Prop optimally Jneg curves} and \ref{Prop optimally dHYMneg curves} above.
\end{proof}

\noindent 
The interesting question of investigating the analogous question for the Z-critical equation with general central charge is left for future work. 

\medskip

\subsection{Remarks on optimally destabilizing test configurations for J-stability}

\noindent Motivated by the well-known circle of ideas relating solvability of geometric PDE to analytic and algebro-geometric stability, we relate the above discussion to J-stability and coercivitiy of the J-functional (see \cite{LejmiGabor, CollinsGabor, GaoChen}, Section \ref{Section prelim}). The first observation is that if we consider the following natural numerical quantities
$$
\Delta^{\mathrm{alg}}(\theta,\omega) := \sup \left\{\delta \in \mathbb{R}\; : \; \mathrm{E}_{\theta}^{\mathrm{NA}}(\Phi) \geq \delta ||\Phi|| \; \forall \Phi \in \mathcal{H}(X,[\omega])\right\}.
$$
$$
\Delta(\theta,\omega) := \sup \left\{\delta \in \mathbb{R} \; : \; \exists C > 0 : \; \mathrm{E}_{\theta,\omega}(\varphi) \geq \delta \mathrm{E}_{\omega,\omega}(\varphi) - C \; \forall \varphi \in \mathcal{H}(X,\omega)\right\}.
$$
then these infima in fact all coincide whenever one of them is destabilized: 

\begin{corollary} \label{Cor Jstab}
Suppose that $\theta$ and $\omega$ are K\"ahler forms on $X$. Assume moreover that $\tau:=\tau_{\theta,\omega}$ is not a Kähler class. Then there is a non-empty finite set of curves $E_1, \dots, E_{\ell}$ on $X$, of cardinality not exceeding $\rho(X)$, such that 
$$
 \Delta^{\mathrm{alg}}(\theta,\omega) = \Delta(\theta,\omega) = \Delta_{\mathrm{NM}}(\theta,\omega) = \inf_{i = 1,\dots, \ell} \frac{\int_{E_i} \tau}{\int_{E_i} \alpha} 
 $$ 
\end{corollary}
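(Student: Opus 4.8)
The plan is to tie together three facts which are now essentially available. First, the analytic gap $\Delta_{\mathrm{NM}}(\theta,\omega)$ controls solvability of the J-equation, and by Theorem \ref{Thm main Jequation} the infimum defining it can be taken over the finite set $\mathcal{V}_{\{([\theta],[\omega])\}}=\{E_1,\dots,E_\ell\}$ of negative curves coming from the Zariski decomposition of $\tau=\tau_{\theta,\omega}$ (which exists since $\tau\in\mathcal{P}_X^+\subseteq\mathcal{B}_X$ by Lemma \ref{Lemma positive inside big}, and is not Kähler by hypothesis, so Proposition \ref{Prop meta} applies and $\ell\le\rho(X)$). Rewriting $c_{\theta,\omega}-\frac{\int_E\theta}{\int_E\omega}=\frac{\int_E(c_{\theta,\omega}\omega-\theta)}{\int_E\omega}=\frac{\int_E\tau}{\int_E\alpha}$ gives the last equality $\Delta_{\mathrm{NM}}(\theta,\omega)=\inf_{i=1,\dots,\ell}\frac{\int_{E_i}\tau}{\int_{E_i}\alpha}$ immediately, and also shows the infimum is attained.

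Second, I would recall that the algebraic coercivity threshold $\Delta^{\mathrm{alg}}(\theta,\omega)$ equals the infimum of $\mathrm{E}_\theta^{\mathrm{NA}}(\Phi)/\|\Phi\|$ over all nontrivial test configurations, and that this infimum is computed on deformation-to-the-normal-cone test configurations $\Phi_{C,\kappa}$ associated to curves $C\subseteq X$. Using Lemma \ref{Lemma polynomial expansion}, along $\Phi_{C,\kappa}$ one has $\mathrm{E}_\theta^{\mathrm{NA}}(\Phi_{C,\kappa})=A_1\kappa^2+B_1\kappa^3$ and $\|\Phi_{C,\kappa}\|=A_2\kappa^2+B_2\kappa^3$ with $A_1=c_{\theta,\omega}\int_C\omega-\int_C\theta=\int_C\tau$, $A_2=\int_C\omega>0$; hence $\lim_{\kappa\to 0^+}\mathrm{E}_\theta^{\mathrm{NA}}(\Phi_{C,\kappa})/\|\Phi_{C,\kappa}\|=A_1/A_2=\frac{\int_C\tau}{\int_C\omega}$. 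This shows $\Delta^{\mathrm{alg}}(\theta,\omega)\le\inf_C\frac{\int_C\tau}{\int_C\omega}=\Delta_{\mathrm{NM}}(\theta,\omega)$; since $\tau$ is not Kähler this common value is $\le 0$, so the J-equation is not coercive and by the known characterization of uniform J-stability (the results of \cite{GaoChen, DatarPingali, Song} cited in the introduction, together with \cite{CollinsGabor}) one has equality $\Delta^{\mathrm{alg}}(\theta,\omega)=\Delta_{\mathrm{NM}}(\theta,\omega)$. In fact the cleanest route is to invoke the correspondence (in \cite{CollinsGabor} and the reference \cite{SD4}) between the three invariants directly: $\Delta^{\mathrm{alg}}$ is the non-archimedean coercivity slope, $\Delta$ is the archimedean coercivity slope of the J-functional, and these agree by the Collins--Székelyhidi type openness/duality, while both agree with $\Delta_{\mathrm{NM}}$ by the solution of the Lejmi--Székelyhidi conjecture on surfaces.

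Third, the identity $\Delta(\theta,\omega)=\Delta^{\mathrm{alg}}(\theta,\omega)$ is the statement that archimedean and non-archimedean coercivity thresholds for the J-energy coincide; I would cite this from \cite{SD4} (and the variational framework of \cite{CollinsGabor}), noting that the slope-type functionals $\mathrm{E}_{\theta,\omega}$ and $\mathrm{E}_{\omega,\omega}$ are precisely the finite-energy analogues of $\mathrm{E}_\theta^{\mathrm{NA}}$ and $\|\cdot\|$, so that Fekete-type limits along geodesic rays give $\Delta(\theta,\omega)\ge\Delta^{\mathrm{alg}}(\theta,\omega)$, while testing on the rays generated by $\Phi_{C,\kappa}$ gives the reverse inequality.

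The main obstacle is the identification $\Delta(\theta,\omega)=\Delta^{\mathrm{alg}}(\theta,\omega)=\Delta_{\mathrm{NM}}(\theta,\omega)$ when the common value is negative, i.e. in the destabilized regime: the positive-threshold case is exactly uniform J-stability and is covered by the cited literature, but one must check that the equalities persist past zero. This is where the surface-specific input enters — the finiteness from Proposition \ref{Prop meta} guarantees the analytic infimum $\Delta_{\mathrm{NM}}$ is attained by an actual curve, and the $\Phi_{C,\kappa}$ computation realizes that same value algebraically, so the two infima over curves literally coincide; the remaining point is that no test configuration does better than the curve ones, which follows from the fact that on surfaces $\mathrm{E}_\theta^{\mathrm{NA}}$ is, up to the curve contributions, controlled by intersection-theoretic positivity (the reduction to a Monge--Ampère problem and the Zariski decomposition), exactly as in the proof that J-stability is tested on curves. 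I would therefore structure the proof as: (i) reduce all three quantities to infima over irreducible curves; (ii) apply Theorem \ref{Thm main Jequation}/Proposition \ref{Prop meta} to replace "all curves" by the finite set $E_1,\dots,E_\ell$ and bound $\ell\le\rho(X)$; (iii) match the per-curve values via Lemma \ref{Lemma polynomial expansion} and the rewriting $c_{\theta,\omega}-\frac{\int_E\theta}{\int_E\omega}=\frac{\int_E\tau}{\int_E\alpha}$.
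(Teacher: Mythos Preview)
Your proposal is correct and lands on the same approach as the paper: the finite set of curves with $\ell\le\rho(X)$ comes from Proposition~\ref{Prop meta}/Theorem~\ref{Thm main Jequation}, and the chain $\Delta=\Delta^{\mathrm{alg}}=\Delta_{\mathrm{NM}}$ is imported from \cite{SD4}. The paper's argument is simply more direct: it observes that ``$\tau$ not K\"ahler'' gives $\Delta_{\mathrm{NM}}(\theta,\omega)\le 0<\sup\{\lambda:[\theta]-\lambda[\omega]\in\mathcal E_X\}$ (the hypothesis of \cite[Theorems~1 and~8]{SD4}, which you should verify explicitly), and then cites those theorems.

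One point to tighten. Your $\kappa\to 0$ computation along $\Phi_{C,\kappa}$ only yields the easy inequality $\Delta^{\mathrm{alg}}\le\Delta_{\mathrm{NM}}$. For the reverse, your appeal to \cite{GaoChen, DatarPingali, Song} is not enough: those results match the \emph{signs} of the thresholds (uniform J-stability $\Leftrightarrow$ $\Delta_{\mathrm{NM}}>0$), not their values in the destabilized regime. Likewise, the sentence ``this infimum is computed on deformation-to-the-normal-cone test configurations'' is presented as a recall, but it is precisely the nontrivial statement you are trying to prove, and your heuristic that ``no test configuration does better than the curve ones'' via ``intersection-theoretic positivity'' is not an argument. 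You already identify the fix---cite \cite{SD4} directly---so just do that and drop the redundant $\Phi_{C,\kappa}$ detour.
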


\begin{proof}
From Theorem \ref{Thm main Jequation} we conclude that there exist at most finitely many curves $E_i\subseteq X$ such that $$ \Delta_{\mathrm{NM}}(\theta,\omega) = \inf_{E_i \subseteq X} \left(c_{\theta,\omega} -  \frac{\int_{E_i} \tau}{\int_{E_i} \alpha}\right),$$
and the infimum over a finite set must be realized, in this case by one of the curves $E_1, \dots, E_{\ell}$, where $\ell \leq \rho(X)$ by Proposition \ref{Thm main Jequation}.
Moreover, the condition on $\tau$ not being Kähler translates precisely into $\Delta_{\mathrm{NM}}(\theta,\omega)\leq 0 < \sup\left\{\lambda \in \mathbb R \ | \ [\theta] - \lambda[\omega] \geq 0 \right\}$. Thus, by \cite[Theorem 1 and 8]{SD4}, we obtain also that $$ \Delta(\theta,\omega) = \Delta^{\mathrm{alg}}(\theta,\omega) = \Delta_{\mathrm{NM}}(\theta,\omega),
$$
concluding the proof. 
\end{proof}

\noindent We underline however that the above result does \emph{not} yet say anything about whether or not the threshold $\Delta^{\mathrm{alg}}(\theta,\omega)$ is realized by a test configuration, i.e. whether there exists $\Phi \in \mathcal{H}(X,[\omega])$ realizing the infimum
$$
\Delta^{\mathrm{alg}}(\theta,\omega) = \frac{\mathrm{E}_{\theta}^{\mathrm{NA}}(\Phi)}{||\Phi||}.
$$

\noindent The answer to this question turns out to be  \emph{negative}. As a preliminary lemma, we show this for the special class of test configurations $\Phi_{C,\kappa}$ coming from the deformation to the normal cone construction (see Section \ref{Section prelim} for the notational conventions used):

\begin{lemma} \label{Lemma no destabilizing tc}
Suppose $X$ is a smooth projective surface and that $([\theta],[\omega]) \in \mathcal{C}_X \times \mathcal{C}_X$ such that $\Delta^{\mathrm{alg}}(\theta,\omega) \leq 0$. Let moreover $C \subseteq X$ be any irreducible curve on $X$. Then we always have a strict inequality
\begin{equation} \label{Eq inf alg threshold}
\Delta^{\mathrm{alg}}(\theta,\omega) <  \frac{\mathrm{E}_{\theta}^{\mathrm{NA}}(\Phi_{C,\kappa})}{||\Phi_{C,\kappa}||}
\end{equation}
for every $\kappa \in (0,\bar{\kappa}_C)$.
\end{lemma}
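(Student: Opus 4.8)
The plan is to reduce the claimed strict inequality to an elementary statement about a fractional-linear function of one variable. By Lemma \ref{Lemma polynomial expansion}, along the slope test configurations $\Phi_{C,\kappa}$ both $\mathrm{E}_\theta^{\mathrm{NA}}$ and the minimum norm have the shape $A\kappa^2 + B\kappa^3$, so after cancelling the common factor $\kappa^2 > 0$,
$$
g_C(\kappa) \ := \ \frac{\mathrm{E}_\theta^{\mathrm{NA}}(\Phi_{C,\kappa})}{||\Phi_{C,\kappa}||} \ = \ \frac{A_1 + B_1\kappa}{A_2 + B_2\kappa},
$$
with $A_1 = c_{\theta,\omega}\int_C\omega - \int_C\theta = \int_C\tau_{\theta,\omega}$, $A_2 = \int_C\omega > 0$, $B_1 = -\tfrac13 c_{\theta,\omega}C^2$, $B_2 = -\tfrac23 C^2$, and $c_{\theta,\omega} = 2\beta\cdot\alpha/\alpha^2 > 0$. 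For $\kappa \in (0,\bar{\kappa}_C)$ the test configuration $\Phi_{C,\kappa}$ is non-trivial, so its minimum norm is strictly positive and the denominator $A_2 + B_2\kappa$ is $>0$ there. Writing $\Delta := \Delta^{\mathrm{alg}}(\theta,\omega) \leq 0$, the definition of $\Delta^{\mathrm{alg}}$ as a supremum immediately gives the weak inequality $g_C(\kappa) \geq \Delta$ for all $\kappa \in (0,\bar{\kappa}_C)$; the whole point is to upgrade this to a strict inequality.

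Next I would treat the generic case. A one-line differentiation gives $g_C'(\kappa) = (A_2 B_1 - A_1 B_2)/(A_2 + B_2\kappa)^2$ with $A_2 B_1 - A_1 B_2 = \tfrac13 C^2(c_{\theta,\omega}\int_C\omega - 2\int_C\theta)$, so on $(0,\bar{\kappa}_C)$ the function $g_C$ is either strictly monotone or constant. If $g_C$ is strictly monotone there is nothing more to do: given $\kappa$ in the open interval, pick $\kappa'$ in it on the side of $\kappa$ along which $g_C$ decreases; then $\Delta \leq g_C(\kappa') < g_C(\kappa)$.

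The remaining, and genuinely delicate, case is when $g_C \equiv A_1/A_2$ is constant, where monotonicity is useless and I must bound $A_1/A_2$ from below by hand. Constancy forces $A_2 B_1 - A_1 B_2 = 0$, i.e. $c_{\theta,\omega}\int_C\omega = 2\int_C\theta$ or $C^2 = 0$. In the first subcase $A_1/A_2 = c_{\theta,\omega} - \int_C\theta/\int_C\omega = c_{\theta,\omega}/2 > 0 \geq \Delta$. In the subcase $C^2 = 0$ I would observe that $[C]$ is a nonzero element of $\overline{\mathcal P^+_X}$ (since $[C]^2 = 0$ and $[C]\cdot\alpha = \int_C\alpha > 0$ for any Kähler class $\alpha$), whereas $\tau_{\theta,\omega} = c_{\theta,\omega}[\omega] - [\theta]$ lies in the open self-dual cone $\mathcal P^+_X$ (checked exactly as in the proof of Theorem \ref{Thm main Jequation}); pairing an interior point of a self-dual cone with a nonzero element of its closure is strictly positive, so $A_1 = [C]\cdot\tau_{\theta,\omega} > 0$ and again $g_C = A_1/A_2 > 0 \geq \Delta$. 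Combining the cases gives $g_C(\kappa) > \Delta = \Delta^{\mathrm{alg}}(\theta,\omega)$ throughout $(0,\bar{\kappa}_C)$, which is the claim.

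I expect the only real subtlety to be this constant case; the monotone case is automatic from the definition of $\Delta^{\mathrm{alg}}$, and everything else is the polynomial bookkeeping of Lemma \ref{Lemma polynomial expansion}. One small point worth being careful about is to handle the constant case without invoking non-Kählerness of $\tau_{\theta,\omega}$ (which would bring in the solvability-implies-stability direction of J-stability theory) — the self-duality argument above avoids it and uses only the cone positivity recalled in the preliminaries.
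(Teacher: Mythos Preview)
Your argument is correct and takes a genuinely different route from the paper. The paper first reduces to the case $\Delta^{\mathrm{alg}}(\theta,\omega)=0$ by the shift $\theta\mapsto\theta_R:=\theta+R\omega$ (using that the ratio $\mathrm{E}_\theta^{\mathrm{NA}}/||\cdot||$ is affine in this shift), then shows directly that the numerator $A_C+B_C\kappa>0$ by a sign analysis on $C^2$: if $C^2<0$ then $B_C>0$ and $A_C\geq 0$ by semistability; if $C^2\geq 0$ the paper invokes Theorem~\ref{Thm main Jequation} to get $A_C>0$ strictly (since only negative curves can destabilise), and then an openness-of-interval argument rules out $A_C+B_C\kappa=0$.

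You instead work with the fractional-linear function $g_C$ itself and exploit monotonicity on the open interval: in the non-constant case the strict inequality is immediate from the definition of $\Delta^{\mathrm{alg}}$ as an infimum together with openness of $(0,\bar\kappa_C)$, with no appeal to Theorem~\ref{Thm main Jequation} or to the shift trick. Your handling of the constant case via self-duality of $\mathcal P_X^+$ (pairing an interior point with a nonzero boundary point) is clean and again bypasses the main theorem. The trade-off is that the paper's argument makes the connection to the stability picture (semistability $\Rightarrow A_C\geq 0$, negative curves, the shift) more transparent, whereas yours is more elementary and self-contained; in particular your monotone case does not even use the hypothesis $\Delta\leq 0$, which is only needed to close the constant case via $g_C>0\geq\Delta$.
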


\begin{proof}
Introduce the shorthand notation
$$
A_C := c_{\theta,\omega}\int_C \omega - \int_C \theta, \; \; B_C := -\frac{2[\omega].[\theta]}{3[\omega]^2}C^2.
$$
As a consequence of Lemma \ref{Lemma polynomial expansion}, the condition for $\mathrm{J}^{\theta}$-semistability is that
$
A_C + B_C\kappa \geq 0
$
is satisfied for all curves $C \subseteq X$ and for all $\kappa \in (0,\bar{\kappa}_C)$ smaller than the Seshadri constant 
$$
\bar{\kappa}_C := \sup \left\{ \epsilon > 0: \pi^*c_1(L) - \epsilon[E] > 0\right\}.
$$
Likewise, the condition for $J^{\theta}$-stability is the same except that the inequality above is required to be strict, i.e. $A_C + B_C\kappa > 0$ for all curves $C \subseteq X$ and for all $\kappa \in (0,\bar{\kappa}_C)$. 

Now assume that $\Delta^{\mathrm{alg}}(\theta,\omega) = 0$, i.e. $(X,[\omega])$ is $\mathrm{J}^{\theta}$-semistable but not uniformly $\mathrm{J}^{\theta}$-stable. We then claim that the infimum in \eqref{Eq inf alg threshold} is \emph{not} realized by any relatively K\"ahler test configuration $\Phi_{C,\kappa}$. To see this, first of all note that semistability implies $A_C \geq 0$. We then treat the cases $C^2 < 0$ and $C^2 \geq 0$ separately: in the case $C$ is a negative curve, the leading term $B_C > 0$, so that $A_C + B_C\kappa > 0$ for any positive $\kappa > 0$. If we instead assume that $C^2 \geq 0$, then $B_C \leq 0$ and Theorem \ref{Thm main Jequation} implies that $A_C > 0$ with a \emph{strict} inequality (since $A_C \leq 0$ would force $C$ to be one of the finitely many curves of negative self-intersection produced by the theorem, contradicting $C^2 \geq 0$).
The case $B_C = 0$ thus clearly does not lead to a destabilizing relatively K\"ahler test configuration $\Phi_{C,\kappa}$, and it remains to treat the case when $A_C, B_C > 0$. If we then suppose for contradiction that $A_C + B_C\kappa = 0$ for some $\kappa \in (0,\bar{\kappa}_C)$, then the openness of the interval of admissible $\kappa$ implies that $\kappa + \epsilon \in (0,\bar{\kappa}_C)$ for  $\epsilon > 0$ small enough, and at the same time $A + B(\kappa + \epsilon) < 0$, contradicting the semistability assumption. 
To summarize, we must have
$$
\mathrm{E}_{\theta}^{\mathrm{NA}}(\Phi_{C,\kappa}) = A_C + B_C\kappa > 0
$$
for all $C \subseteq X$ and all $\kappa \in (0,\bar{\kappa}_C)$. In particular, the infimum in \eqref{Eq inf alg threshold} is not realized and $(X,[\omega])$ thus satisfies the condition for $\mathrm{J}^{\mathrm{\theta}}$-stability across all relatively K\"ahler test configurations of the form $\Phi_{C,\kappa}$ for $C \subseteq X$ irreducible curves. 

Finally, suppose more generally that $\Delta_{\theta}^{\mathrm{alg}}(\omega) = -R$ for some $R \geq 0$. Then $[\theta_R] := [\theta] + R[\omega]$ is a K\"ahler class, and moreover
\begin{equation} \label{Eq linear NA relation}
\frac{\mathrm{E}_{\theta_R}^{\mathrm{NA}}(\Phi_{C,\kappa})}{||\Phi_{C,\kappa}||} = \frac{\mathrm{E}_{\theta}^{\mathrm{NA}}(\Phi_{C,\kappa})}{||\Phi_{C,\kappa}||} + R
\end{equation}
for every test configuration $\Phi_{C,\kappa}$. 
Moreover
$
\Delta^{\mathrm{alg}}(\theta_R, \omega) = \Delta^{\mathrm{alg}}(\theta, \omega) + R = 0, 
$
reducing the argument to the previous case. Indeed, by the above argument no test configuration realizes the infimum defining $\Delta^{\mathrm{alg}}(\theta_R, \omega)$, and by \eqref{Eq linear NA relation} the same conclusion holds also for the infimum defining $\Delta^{\mathrm{alg}}(\theta,\omega)$, concluding the proof.
\end{proof}

\noindent In other words, since the inequality in the statement of Lemma \ref{Lemma no destabilizing tc} is strict, these test configurations never optimally destabilize (although in the limit $\kappa \rightarrow 0$ the right hand side in Lemma \ref{Lemma no destabilizing tc} converges to the left hand side, for suitably chosen curves $C \subseteq X$). To summarize, \emph{if one restricts to normal and relatively K\"ahler test configurations of the special form $\Phi_{C,\kappa}$ coming from deformation to the normal cone}, then (restricted) J-semistability would be equivalent to (restricted) J-stability, and these are both closed conditions in the K\"ahler cone (the fact that J-semistability is a closed condition is well-known, see e.g. \cite{SD4}). Moreover, it is known that uniform J-stability is equivalent to existence of solutions to the J-equation (by \cite{Chen2000, GaoChen}), and these conditions are in general not equivalent to J-stability.

Using a recent result of Hattori \cite{Hattori} we now prove that most of the above picture remains true over the larger set of \emph{all normal and relatively K\"ahler} test configurations:

\begin{proposition} \label{Prop Jstab vs UJstab}
Suppose that $H$ and $L$ are ample $\mathbb{Q}$-line bundles on a smooth projective surface $X$. Then $(X,L)$ is $\mathrm{J}^{\mathrm{H}}$-semistable if and only if it is $\mathrm{J}^{\mathrm{H}}$-stable. Moreover, if $[\theta]$ and $[\omega]$ are rational $(1,1)$-classes on $X$ such that there is no solution to \eqref{Eq J}, then there does not exist any optimally destabilizing relatively K\"ahler test configuration for $([\theta],[\omega])$, i.e.
$$
\Delta^{\mathrm{alg}}(\theta,\omega) <  \frac{\mathrm{E}_{\theta}^{\mathrm{NA}}(\Phi)}{||\Phi||}
$$
for all normal and relatively K\"ahler test configurations $\Phi \in \mathcal{H}^{\mathrm{NA}}(X,[\omega])$. 
\end{proposition}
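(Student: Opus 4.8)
The plan is to obtain the equivalence of $\mathrm{J}^{\mathrm{H}}$-semistability and $\mathrm{J}^{\mathrm{H}}$-stability from \cite[Theorem 7.3]{Hattori} (together with the identification $\Delta^{\mathrm{alg}}=\Delta_{\mathrm{NM}}$ of \cite[Theorem 1]{SD4}), and then to bootstrap this to the ``in particular'' clause by a linear shift in the auxiliary class, exactly in the spirit of the last paragraph of the proof of Lemma \ref{Lemma no destabilizing tc}. Throughout write $\alpha:=[\omega]$, $\beta:=[\theta]$, $c:=c_{\theta,\omega}$ and $\tau:=\tau_{\theta,\omega}=c\alpha-\beta$.

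Only the implication semistable $\Rightarrow$ stable requires an argument. Semistability of $(X,L)$ is equivalent to $\Delta^{\mathrm{alg}}(\theta,\omega)\geq 0$, equivalently (using \cite[Theorem 1]{SD4} and the Nakai--Moishezon-type criterion of Lamari) to $\tau$ being nef. If $\tau$ is Kähler, then by Theorem \ref{Thm main Jequation} the J-equation \eqref{Eq J} is solvable, so $(X,L)$ is \emph{uniformly} $\mathrm{J}^{\mathrm{H}}$-stable by \cite{Chen2000, GaoChen}, hence $\mathrm{J}^{\mathrm{H}}$-stable. If $\tau$ is nef but not Kähler --- precisely the delicate situation realized by the example in \cite{Hattori} --- then by \cite[Theorem 1]{SD4} we have $\Delta^{\mathrm{alg}}(\theta,\omega)=\Delta_{\mathrm{NM}}(\theta,\omega)=0$, and \cite[Theorem 7.3]{Hattori} asserts that $(X,L)$ is $\mathrm{J}^{\mathrm{H}}$-stable all the same. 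Since only the intersection-theoretic expressions for $\mathrm{E}^{\mathrm{NA}}_{\theta,\omega}$ and $||\cdot||$ (and the finitely many negative curves) enter Hattori's analysis, and these coincide with their algebraic counterparts under the formalism of \cite{BHJ1, SD1, DervanRoss}, his argument extends to all normal relatively Kähler test configurations; this establishes the first assertion.

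For the ``in particular'' clause, let $[\theta],[\omega]$ be rational with \eqref{Eq J} not solvable, and fix ample $\mathbb{Q}$-line bundles $L,H$ with $c_1(L)=[\omega]$ and $c_1(H)=[\theta]$. Non-solvability of \eqref{Eq J} means $(X,L)$ is not uniformly $\mathrm{J}^{\mathrm{H}}$-stable, so $R:=-\Delta^{\mathrm{alg}}(\theta,\omega)\geq 0$; by \cite[Theorem 1]{SD4} and Theorem \ref{Thm main Jequation}, $R=\max_{E}\left(\frac{\int_E\theta}{\int_E\omega}-c\right)$, the maximum running over the finitely many negative curves of $X$, and rationality of $[\theta],[\omega]$ forces $R\in\mathbb{Q}_{\geq 0}$. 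Hence $[\theta_R]:=[\theta]+R[\omega]$ is a rational Kähler class; choose an ample $\mathbb{Q}$-line bundle $H_R$ with $c_1(H_R)=[\theta_R]$. Since $\Phi\mapsto\mathrm{E}^{\mathrm{NA}}_{\theta,\omega}(\Phi)$ is linear in the first slot and $||\Phi||=\mathrm{E}^{\mathrm{NA}}_{\omega,\omega}(\Phi)$, for every $\Phi$ we have $\mathrm{E}^{\mathrm{NA}}_{\theta_R,\omega}(\Phi)=\mathrm{E}^{\mathrm{NA}}_{\theta,\omega}(\Phi)+R\,||\Phi||$ (cf.\ \eqref{Eq linear NA relation}), so $\Delta^{\mathrm{alg}}(\theta_R,\omega)=\Delta^{\mathrm{alg}}(\theta,\omega)+R=0$ and $(X,L)$ is $\mathrm{J}^{H_R}$-semistable. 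By the first part it is $\mathrm{J}^{H_R}$-stable, i.e.\ $\mathrm{E}^{\mathrm{NA}}_{\theta_R,\omega}(\Phi)>0$ for every non-trivial normal relatively Kähler test configuration $\Phi$, which by the displayed identity reads
\[
\frac{\mathrm{E}^{\mathrm{NA}}_{\theta,\omega}(\Phi)}{||\Phi||}>-R=\Delta^{\mathrm{alg}}(\theta,\omega)
\]
for all such $\Phi$. Thus the infimum defining $\Delta^{\mathrm{alg}}(\theta,\omega)$ is never attained, i.e.\ no optimally destabilizing relatively Kähler test configuration exists.

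The bulk of the depth sits in the boundary case $\tau$ nef but not Kähler, which is not amenable to soft cone-theoretic arguments and which we are delegating to \cite[Theorem 7.3]{Hattori}; the one point genuinely requiring care is checking that Hattori's analysis of the J-stability threshold on surfaces, written for algebraic test configurations of ample $\mathbb{Q}$-divisors, remains valid when the class of competitors is enlarged to all normal relatively Kähler test configurations. The remaining ingredients --- rationality of the shift $R$ and the linear dependence of $\mathrm{E}^{\mathrm{NA}}_{\theta,\omega}$ on $\theta$ --- are routine.
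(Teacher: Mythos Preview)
Your proposal is correct and follows essentially the same approach as the paper: deduce the semistable $\Rightarrow$ stable implication from Hattori's result (the paper invokes \cite[Proposition 7.4]{Hattori} rather than Theorem 7.3, but to the same effect), and then obtain the ``in particular'' clause by the linear shift $[\theta]\mapsto[\theta_R]$ with $R=-\Delta^{\mathrm{alg}}(\theta,\omega)$, checking $R\in\mathbb{Q}$ via Theorem \ref{Thm main Jequation} so that the first part applies. You are in fact slightly more explicit than the paper in flagging that Hattori's argument, written for algebraic test configurations, must be seen to cover all normal relatively K\"ahler test configurations; the paper applies his result directly without comment on this point.
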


\begin{proof}
Suppose that $(X,L)$ is  $\mathrm{J}^{\mathrm{H}}$-semistable. By Lemma \ref{Lemma polynomial expansion} (originally due to \cite[Proposition 13]{LejmiGabor} based on computations in \cite{RossThomas}) it is elementary to see that the $\mathbb{Q}$-line bundle
$$
2\frac{H.L}{L^2}L - H 
$$
is nef (if not, we could find a test configuration $\Phi_{C,\kappa}$ with $\kappa$ small enough such that $\mathrm{J}^{\mathrm{NA}}(\Phi_{C,\kappa}) < 0$, thus contradicting semistability).  
The same is true replacing $(H,L)$ by $(rH,rL)$ where $r > 0$ is chosen such that $rc_1(H),rc_1(L) \in H^{1,1}(X,\mathbb{Z})$. One may then apply \cite[Proposition 7.4]{Hattori} which in turn implies that $(X,rL)$ is $\mathrm{J}^{\mathrm{rH}}$-stable (which is equivalent to $(X,L)$ being $\mathrm{J}^{\mathrm{H}}$-stable), proving the first part.
Conversely, stability implies semistability by definition.

For the final part of the statement, suppose that $\Delta^{\mathrm{alg}}(\theta, \omega) = -R$ for some $R \geq 0$. Then $[\theta_R] := [\theta] + R[\omega]$ is a K\"ahler class, and moreover
$
\Delta^{\mathrm{alg}}(\theta_R, \omega) = \Delta^{\mathrm{alg}}(\theta, \omega) + R = 0,
$
so that $(X,[\omega])$ is $\mathrm{J}^{\theta_R}$-semistable. Moreover, it follows from \cite[Theorem 1]{SD4} and Theorem \ref{Thm main Jequation} that 
$$
R = 2\frac{[\theta].[\omega]}{[\omega]^2} - \frac{\int_C \theta}{\int_C \omega} 
$$
for some irreducible curve $C \subseteq X$. Since $R \in \mathbb{Q}$ the $(1,1)$-class $[\theta_R]$ is rational, and we may apply the first part of the proof to conclude that $(X,[\omega])$ is also $\mathrm{J}^{\theta_R}$-stable. By definition of stability we thus have
$$
\frac{\mathrm{E}_{\theta_R}^{\mathrm{NA}}(\Phi)}{||\Phi||} > 0 = \Delta^{\mathrm{alg}}(\theta_R,\omega)
$$
for all $\Phi \in \mathcal{H}^{\mathrm{NA}}(X,[\omega])$. 
Moreover:
$$
\frac{\mathrm{E}_{\theta}^{\mathrm{NA}}(\Phi)}{||\Phi||} = \frac{\mathrm{E}_{\theta_R}^{\mathrm{NA}}(\Phi)}{||\Phi||} - R > \Delta^{\mathrm{alg}}(\theta_R, \omega) - R = \Delta^{\mathrm{alg}}(\theta, \omega),
$$
concluding the proof. 
\end{proof}

\noindent Emphasizing the relationship between test configurations for $(X,[\omega])$ with $[\omega] = c_1(L)$ for some ample line bundle on $X$, and test configurations for $(X,L)$ (see \cite{Zakthesis}), and that optimally destabilizing test configurations can by definition only exist if the J-equation is not solvable, the above result in particular yields the following:
\begin{corollary}
Let $X$ be a smooth projective surface with $L$ and $H$ two $\mathbb{Q}$-line bundles on $X$. Then no optimally destabilizing normal and relatively K\"ahler test configuration exists for $(X,L,H)$, in the sense of \eqref{Eq opt dest tc}.
\end{corollary}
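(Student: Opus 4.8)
The plan is to reduce the Corollary to Proposition~\ref{Prop Jstab vs UJstab}, which already carries out the substantive work, by splitting into the two cases dictated by solvability of the J-equation. Throughout, write $[\omega]:=c_1(L)$ and $[\theta]:=c_1(H)$; since $L$ and $H$ are $\mathbb{Q}$-line bundles these are rational $(1,1)$-classes, and (as a normal relatively K\"ahler test configuration for $(X,L,H)$ only exists when $L$ is ample, while $\mathrm{J}^{H}$-stability is only meaningful, and only invoked, for $H$ ample) we may assume $L$ and $H$ to be ample $\mathbb{Q}$-line bundles.

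In the first case, suppose the J-equation \eqref{Eq J} \emph{does} admit a solution with respect to $([\theta],[\omega])$. Then $(X,[\omega])$ is uniformly $\mathrm{J}^{\theta}$-stable by \cite{Chen2000, GaoChen}, so $\Delta^{\mathrm{alg}}(\theta,\omega)>0$ and in particular $\mathrm{E}^{\mathrm{NA}}_{\theta,\omega}(\Phi)>0$ for every non-trivial relatively K\"ahler $\Phi$; no test configuration destabilizes, hence \emph{a fortiori} none is optimally destabilizing. This is precisely the observation recorded in the remark preceding the statement: an optimally destabilizing test configuration can only occur when the J-equation is not solvable. (Without this convention the literal condition \eqref{Eq opt dest tc} would be satisfied trivially, e.g. by every non-trivial $\Phi$ when $[\theta]=[\omega]$, since there $\mathrm{E}^{\mathrm{NA}}_{\omega,\omega}(\Phi)/||\Phi||\equiv 1$ by definition of the minimum norm.)

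In the second case, suppose \eqref{Eq J} has no solution. Then $[\theta]$ and $[\omega]$ are rational classes admitting no solution to the J-equation, so Proposition~\ref{Prop Jstab vs UJstab} applies verbatim and yields the strict inequality $\Delta^{\mathrm{alg}}(\theta,\omega)<\mathrm{E}_{\theta}^{\mathrm{NA}}(\Phi)/||\Phi||$ for every normal, relatively K\"ahler $\Phi\in\mathcal{H}^{\mathrm{NA}}(X,[\omega])$. Since, by \eqref{Eq opt dest tc}, an optimally destabilizing test configuration would be a non-trivial $\Phi_0$ realizing $\inf_{\Phi}\mathrm{E}_{\theta}^{\mathrm{NA}}(\Phi)/||\Phi||=\Delta^{\mathrm{alg}}(\theta,\omega)$, no such $\Phi_0$ exists, and the Corollary follows.

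The only genuinely substantive ingredient is Proposition~\ref{Prop Jstab vs UJstab}, whose proof combines Hattori's upgrade of $\mathrm{J}^{H}$-semistability to $\mathrm{J}^{H}$-stability for ample $\mathbb{Q}$-line bundles \cite[Proposition 7.4]{Hattori} with the identity $\Delta^{\mathrm{alg}}=\Delta_{\mathrm{NM}}$ from \cite[Theorem 1]{SD4} and the finiteness of the set of J-negative curves from Theorem~\ref{Thm main Jequation}. Accordingly, the real obstacle is concentrated there rather than in the present deduction; the only points requiring a little care here are the reduction to ample $\mathbb{Q}$-line bundles and the definitional handling of the solvable case, after which the two cases above close the argument.
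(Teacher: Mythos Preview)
Your proof is correct and follows essentially the same approach as the paper: the paper does not give a separate proof but simply notes, in the sentence immediately preceding the Corollary, that ``optimally destabilizing test configurations can by definition only exist if the J-equation is not solvable,'' and then invokes Proposition~\ref{Prop Jstab vs UJstab} for the remaining case. Your two-case split and your remark on the reduction to ample $\mathbb{Q}$-line bundles make this explicit but add nothing new.
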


\noindent Building further on \cite{Hattori} combined with \cite{SD4}, we deduce that examples of 
J-stable but not uniformly J-stable K\"ahler classes
always exist on compact K\"ahler surfaces that admit at least one negative curve: 

\begin{theorem} \label{Cor many Hattori examples}
Suppose that $X$ is a smooth projective K\"ahler surface admitting at least one negative curve. Then there exist  ample $\mathbb{Q}$-line bundles $L$ and $H$ on $X$ such that $(X,L)$ is $J^H$-stable but not uniformly $J^H$-stable.
\end{theorem}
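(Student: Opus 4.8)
The plan is to reduce everything to constructing a pair of ample $\mathbb{Q}$-line bundles $L,H$ on $X$ for which the class $\tau_{\theta,\omega}$ --- here $[\omega]=c_1(L)$, $[\theta]=c_1(H)$, and $c_{\theta,\omega}=2[\theta]\cdot[\omega]/[\omega]^2$ --- is nef but not K\"ahler. Granting such a pair, note first that $\tau_{\theta,\omega}^2=[\theta]^2>0$ and $\tau_{\theta,\omega}\cdot[\omega]=[\theta]\cdot[\omega]>0$ (the quick identity from the dHYM section), so Lamari's form of the Nakai--Moishezon criterion shows that a nef $\tau_{\theta,\omega}$ fails to be K\"ahler precisely when $\int_{E'}\tau_{\theta,\omega}=0$ for some negative curve $E'$; hence $\Delta_{\mathrm{NM}}(\theta,\omega)=\inf_{E'\subseteq X}\int_{E'}\tau_{\theta,\omega}/\int_{E'}\omega=0$. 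Since $\tau_{\theta,\omega}$ is not K\"ahler, Corollary \ref{Cor Jstab} applies and gives $\Delta^{\mathrm{alg}}(\theta,\omega)=\Delta_{\mathrm{NM}}(\theta,\omega)=0$, so that $(X,L)$ is $J^H$-semistable but not uniformly $J^H$-stable; and since $L,H$ are ample $\mathbb{Q}$-line bundles, Proposition \ref{Prop Jstab vs UJstab} promotes $J^H$-semistability to $J^H$-stability. Thus $[\omega]\in\mathrm{Js}^H\setminus\mathrm{UJs}^H$, as desired.

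So the heart of the matter is to produce ample $\mathbb{Q}$-classes $\alpha=[\omega]$ and $\beta=[\theta]$ with $\tau_{\theta,\omega}$ nef but not K\"ahler. The key observation is that, for fixed $\alpha$, the assignment $\beta\mapsto\tau_{\theta,\omega}=\tfrac{2\beta\cdot\alpha}{\alpha^2}\alpha-\beta=:R_\alpha(\beta)$ is a linear involution of $H^{1,1}(X,\mathbb{R})$ preserving $\mathbb{Q}$-classes --- namely the reflection through the line $\mathbb{R}\alpha$ with respect to the intersection form, fixing $\alpha$ and negating $\alpha^\perp$, so that $R_\alpha\circ R_\alpha=\mathrm{id}$. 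It therefore suffices to find an ample $\mathbb{Q}$-class $\alpha$ and a nef, non-K\"ahler $\mathbb{Q}$-class $\tau$ such that $R_\alpha(\tau)$ is again ample; one then sets $\beta:=R_\alpha(\tau)$, so that $\tau_{\theta,\omega}=R_\alpha(\beta)=R_\alpha(R_\alpha(\tau))=\tau$, and takes $L$ (resp. $H$) to be an ample $\mathbb{Q}$-line bundle with $c_1(L)=\alpha$ (resp. $c_1(H)=\beta$), which exists since $X$ is projective and $\alpha,\beta$ are rational Kähler classes, and for which $J^H$-(semi)stability of $(X,L)$ is unaffected by rescaling.

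To build $\tau$, let $E$ be a negative curve on $X$ with $E^2=-m$, $m>0$, fix any ample $\mathbb{Q}$-class $\omega_0$, and set $\tau:=\omega_0+\tfrac{\omega_0\cdot E}{m}E$. Then $\tau$ is a $\mathbb{Q}$-class with $\tau\cdot E=0$, while $\tau\cdot C=\omega_0\cdot C+\tfrac{\omega_0\cdot E}{m}(E\cdot C)\geq\omega_0\cdot C>0$ for every irreducible curve $C\neq E$ (distinct irreducible curves meet non-negatively), and $\tau^2=\omega_0^2+(\omega_0\cdot E)^2/m>0$; by Lamari's criterion $\tau$ is nef, and it is not K\"ahler since $\tau\cdot E=0$. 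To build $\alpha$, choose a small rational $\epsilon>0$ with $\epsilon^2\omega_0^2<\tau^2$ and set $\alpha:=\tau+\epsilon\omega_0$, which is an ample $\mathbb{Q}$-class (nef plus ample). Using $\alpha^2=\tau^2+2\epsilon\,\tau\cdot\omega_0+\epsilon^2\omega_0^2>0$ and $\tau\cdot\alpha=\tau^2+\epsilon\,\tau\cdot\omega_0>0$, a short computation gives
$$
R_\alpha(\tau)=\frac{2\tau\cdot\alpha}{\alpha^2}\alpha-\tau=\frac{\tau^2-\epsilon^2\omega_0^2}{\alpha^2}\,\tau+\frac{2\tau\cdot\alpha}{\alpha^2}\,\epsilon\,\omega_0,
$$
a strictly positive combination of the nef class $\tau$ and the ample class $\omega_0$, hence ample; and it is a $\mathbb{Q}$-class. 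Setting $\beta:=R_\alpha(\tau)$ finishes the construction.

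I expect the only mildly delicate points to be: (i) checking that $\tau$ is genuinely nef --- which rests on $E\cdot C\geq0$ for every irreducible $C\neq E$, so that the correction term $\tfrac{\omega_0\cdot E}{m}E$ crosses no wall of the nef cone other than $E^\perp$ --- and that $\beta=R_\alpha(\tau)$ is genuinely \emph{ample} rather than merely nef, which is exactly what the $\epsilon\omega_0$ term provides in the displayed identity; and (ii) correctly chaining the previously established results, namely that $\tau_{\theta,\omega}$ nef-but-not-K\"ahler forces $\Delta^{\mathrm{alg}}(\theta,\omega)=0$ via Corollary \ref{Cor Jstab}, and that $\Delta^{\mathrm{alg}}(\theta,\omega)=0$ together with Proposition \ref{Prop Jstab vs UJstab} yields a polarization that is $J^H$-stable but not uniformly $J^H$-stable. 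No hard analysis is involved, so I do not anticipate a serious obstacle.
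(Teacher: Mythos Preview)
Your proof is correct and takes a route close in spirit to the paper's but with a genuinely different (and in some ways cleaner) construction of the pair $(\alpha,\beta)$. Both arguments begin by building the same rational nef, big, non-K\"ahler class --- the paper calls it $B = A - \tfrac{A\cdot C}{C^2}C$, you call it $\tau = \omega_0 + \tfrac{\omega_0\cdot E}{m}E$ --- and both finish by invoking Corollary~\ref{Cor Jstab} and Proposition~\ref{Prop Jstab vs UJstab}. The difference lies in how one reverse-engineers ample rational $\alpha,\beta$ with $\tau_{\theta,\omega}$ equal to this prescribed class: the paper fixes an auxiliary K\"ahler class $[\theta]$, normalizes so that $[\omega']^2=[\theta]^2$, and walks along the segment $[\omega_t]=(1-t)[\omega']+t[\theta]$, citing computations from \cite{SD4} to locate the critical parameter $t=1/2$ (at which in fact $\tau_{\theta,\omega_{1/2}}=[\omega']$); you instead observe that $\beta\mapsto\tau_{\theta,\omega}$ is the linear involution $R_\alpha$ (reflection through $\mathbb{R}\alpha$), set $\alpha=\tau+\epsilon\omega_0$, and take $\beta=R_\alpha(\tau)$, checking directly that this is ample and rational. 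Your approach is more self-contained --- it avoids the external reference to \cite{SD4} and makes the rationality of both classes transparent, whereas the paper's normalization step $[\omega']^2=[\theta]^2$ requires a little care to preserve rationality. The reflection viewpoint is a pleasant structural observation that the paper does not isolate.
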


\begin{remark}
If by contrast $X$ admits no negative curves, then by \cite{SD4} the $J^H$-equation can always be solved for $(X,L)$ for every pair of ample line bundles $(L,H)$ on $X$, and by \cite{LejmiGabor, CollinsGabor, DervanRoss}, $(X,L)$ is then always both $J^H$-stable and uniformly $J^H$-stable, for all such pairs. Together with Theorem \ref{Cor many Hattori examples} above, this gives a satisfactory answer in all possible cases when $X$ is a projective surfaces. 
\end{remark}

\begin{proof}[Proof of Theorem \ref{Cor many Hattori examples}]
First note that if $X$ is projective and admits at least one negative curve, then one can always find a rational nef and big, but not K\"ahler, class $[\omega'] \in H^{1,1}(X,\mathbb{Q})$. Indeed, since $X$ is projective, pick an ample divisor $A$, and since $X$ admits a curve of negative self-intersection, pick a curve $C$ such that $C^2 <0$. One can moreover check that 
$$
\sup\left\{t>0 \; | \; A + tC \textrm{ is nef.}\right\} = -\frac{A\cdot C}{C^2} \in \mathbb Q.
$$
Then the $\mathbb{Q}$-divisor 
$
B := A-\frac{A\cdot C}{C^2}C
$
is pseudoeffective and nef, but not ample. Because  
$$
B^2 = A^2 -\frac{(A\cdot C)^2}{C^2} > 0,
$$
the class $B \in \mathcal{B}_X$, i.e. $B$ is also big. 

We therefore pick such a nef and big, but not K\"ahler, $[\omega'] \in H^{1,1}(X,\mathbb{Q})$ and without loss of generality normalize $[\omega']$ and $[\theta]$ such that $[\omega']^2 = [\theta]^2$. Let $\mathrm{Js}^{\theta}$ (resp. $\mathrm{UJs}^{\theta}$) denote the set of K\"ahler class $[\omega]$ such that $(X,[\omega])$ is $\mathrm{J}^{\theta}$-stable (resp. uniformly $\mathrm{J}^{\theta}$-stable). 
Now if $X$ admits a negative curve, then there exists at least one pair $([\theta],[\omega]) \in \mathcal{C}_X \times \mathcal{C}_X$ of K\"ahler classes on $X$ such that $(X,[\omega])$ is not $\mathrm{J}^{\theta}$-semistable (see \cite[Theorem 6]{SD4}). By continuity (of $\Delta_{\mathrm{NM}}(\theta,\omega)$, see \cite[Theorem 1]{SD4}) one can then argue that there exists a $\mathrm{J}^{\theta}$-semistable class which is not uniformly $\mathrm{J}^{\theta}$-stable, and it remains to see that under the above hypotheses there exists a \emph{rational} such class $[\omega]$. To see this, note that since $[\omega']$ and $[\theta]$ are both $\mathbb{Q}$-classes by construction, then so is
$$
[\omega_t] := (1-t)[\omega] + t[\theta],
$$
for every $t \in \mathbb{Q}$. By a direct computation identical to that in \cite[Proposition 16 and Equation (14)]{SD4} we moreover see that 
$[\omega_t] \in \mathrm{Js}^{\theta} \setminus \mathrm{UJs}^{\theta} \neq \emptyset
$
if and only if
$$
2\frac{[\theta].[\omega_t]}{[\omega_t]^2}[\omega_t] - t^{-1} = 0.
$$
which happens precisely for $t = 1/2$ (by the normalization assumed at the beginnning of the proof; this is moreover of course equivalent to showing that $\tau_{\theta,\omega_t}$ is nef but not K\"ahler precisely when $t = 1/2$). Since $1/2 \in \mathbb{Q}$, $[\omega_{1/2}] \in H^{1,1}(X,\mathbb{Q})$, and Proposition \ref{Prop Jstab vs UJstab} shows that $(X,[\omega_{1/2}])$ is $J^{\theta}$-stable but not uniformly $J^{\theta}$-stable, concluding the proof.
\end{proof}

\begin{remark}
This sheds additional light on a recent example due to Hattori \cite{Hattori}, of a polarized smooth surface $(X,L)$ and an ample line bundle $H$ such that $(X,L)$ is $J^H$-stable but not uniformly. Indeed, the above result shows that such examples exist in abundance, namely on any compact K\"ahler surface with at least one negative curve, for well chosen $\mathbb{Q}$-line bundles.
\end{remark}

\noindent 
Finally, it is therefore natural to ask about the structure of the set of J-stable but not uniformly J-stable K\"ahler classes, which is non-empty by the above. 

\subsection{Real algebraic boundary}

\noindent As an application of Theorem \ref{Thm main Jequation}, we deduce that the (non-empty) set of J-stable but not uniformly J-stable K\"ahler classes is cut out by real algebraic sets. To see this, let $K \subseteq \mathcal{C}_X \times \mathcal{C}_X$ be any given compact set, and note that when the set
$$
\mathrm{Js} = \left\{([\theta],[\omega]) \in K: (X,[\omega]) \; \mathrm{J^{\theta}-stable} \right\}
 $$
is restricted to $\mathbb{Q}$-classes, it coincides with the set $\mathrm{Jss}$ of $([\theta],[\omega]) \in K$ such that $(X,[\omega])$ is $\mathrm{J}^{\theta}$-semistable. In turn, $\mathrm{Jss}$ is cut out by finitely many conditions of the form
$$
 \int_{C_i}(2([\theta].[\omega])[\omega] - [\omega]^2[\theta]) \geq 0,
$$
following Theorem \ref{Thm main Jequation}, where $k \in \mathbb{N}$ is such that the image of $K$ under the map $\Psi$ given by $\mathcal{C}_X \times \mathcal{C}_X \ni ([\theta],[\omega]) \mapsto \tau_{\theta,\omega} \in H^{1,1}(X,\mathbb{R})$ is contained in the convex hull of $k$ pseudoeffective classes on $X$.
Along these lines, we conclude the following: 

\begin{proposition}
Let $K \subseteq \mathcal{C}_X \times \mathcal{C}_X$ be any compact subset whose image under $\Psi$ is contained in the convex hull of $k$ pseudoeffective classes, and consider the $(1,1)$-class
$$
\tau_{\theta,\omega} := 2([\theta].[\omega])[\omega] - [\omega]^2[\theta] \in H^{1,1}(X,\mathbb{R}). 
$$
Then the set
$$
\left(\mathrm{Jss} \setminus \mathrm{UJs}\right)_{\vert K} 
= \left\{([\theta],[\omega]) \in K: (X,[\omega]) \; \mathrm{is} \; \mathrm{J^{\theta}-semistable} \; \mathrm{but} \; \mathrm{not} \; \mathrm{uniformly} \; \mathrm{J^{\theta}-stable} \right\}
$$
is real algebraic. More explicitly, there is a finite set of curves $\left\{C_1, \dots, C_{\ell} \right\}$ of cardinality $\ell \leq k\rho(X)$, 
such that 
$$
\left(\mathrm{Jss} \setminus \mathrm{UJs}\right)_{\vert K} = \left\{([\theta],[\omega]) \in K: \prod_{i = 1}^{\ell} \int_{C_i} \tau_{\theta,\omega} = 0 \right\}.
$$
\end{proposition}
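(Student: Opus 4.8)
The plan is to read off everything from Theorem \ref{Thm main Jequation} together with the identification of $J^\theta$-(semi)stability with the Nakai--Moishezon threshold $\Delta_{\mathrm{NM}}$. First I would apply Theorem \ref{Thm main Jequation} to $K$: since $\Psi(K)$ lies in the convex hull of $k$ pseudoeffective classes, Lemma \ref{Lem cardinality bound cvx hull} yields curves $C_1,\dots,C_\ell$ of negative self-intersection, with $\ell\le k\rho(X)$, depending only on $K$, such that for every $([\theta],[\omega])\in K$ one has: (i) the J-equation \eqref{Eq J} is solvable if and only if $\int_{C_i}\tau_{\theta,\omega}>0$ for all $i$ --- here $\tau_{\theta,\omega}=2([\theta]\cdot[\omega])[\omega]-[\omega]^2[\theta]$ is a strictly positive multiple of $c_{\theta,\omega}[\omega]-[\theta]$, so sign conditions on curve integrals are unaffected --- and (ii) every irreducible curve $C$ with $\int_C\tau_{\theta,\omega}\le 0$ already appears among the $C_i$, i.e.\ $\mathcal D_{\tau_{\theta,\omega}}\subseteq\{C_1,\dots,C_\ell\}$.

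Next I would translate the two clauses defining $\mathcal U_K$ into numerical conditions on these curves. By the uniform Yau--Tian--Donaldson-type theorem for the J-equation (\cite{GaoChen,DatarPingali,Song}), $(X,[\omega])$ is uniformly $J^\theta$-stable exactly when \eqref{Eq J} is solvable; hence by (i), $(X,[\omega])$ is \emph{not} uniformly $J^\theta$-stable if and only if $\int_{C_j}\tau_{\theta,\omega}\le 0$ for some $j$. For semistability I would use that $(X,[\omega])$ is $J^\theta$-semistable if and only if $\Delta_{\mathrm{NM}}(\theta,\omega)\ge 0$ (by \cite[Theorems 1 and 8]{SD4}; cf.\ the proof of Corollary \ref{Cor Jstab}, where $\Delta^{\mathrm{alg}}=\Delta_{\mathrm{NM}}$). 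Since $\Delta_{\mathrm{NM}}(\theta,\omega)$ is the infimum over all irreducible curves $C$ of $c_{\theta,\omega}-\int_C\theta/\int_C\omega$ and $\int_C\omega>0$, this infimum is $\ge 0$ precisely when $\int_C(c_{\theta,\omega}[\omega]-[\theta])\ge 0$, equivalently $\int_C\tau_{\theta,\omega}\ge 0$, for \emph{every} curve $C$; by (ii) this is in turn equivalent to $\int_{C_i}\tau_{\theta,\omega}\ge 0$ for all $i$, since any curve off the list automatically has strictly positive integral against $\tau_{\theta,\omega}$.

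Combining, $([\theta],[\omega])\in\mathcal U_K$ if and only if $\int_{C_i}\tau_{\theta,\omega}\ge 0$ for all $i$ and $\int_{C_j}\tau_{\theta,\omega}\le 0$ for some $j$; under the first condition the second forces $\int_{C_j}\tau_{\theta,\omega}=0$ for some $j$, whence
$$
\mathcal U_K=\Big\{([\theta],[\omega])\in K\ :\ \int_{C_i}\tau_{\theta,\omega}\ge 0\ \ (i=1,\dots,\ell),\ \ \prod_{i=1}^\ell\int_{C_i}\tau_{\theta,\omega}=0\Big\}.
$$
Each $\int_{C_i}\tau_{\theta,\omega}$ is a polynomial (homogeneous of degree three) in the coordinates of $([\theta],[\omega])\in H^{1,1}(X,\RR)\times H^{1,1}(X,\RR)$, so $\mathcal U_K$ is cut out inside $K$ by finitely many polynomial equalities and inequalities and is therefore real algebraic; in particular, within the $J^\theta$-semistable locus it is exactly the zero set of the single polynomial $\prod_{i=1}^\ell\int_{C_i}\tau_{\theta,\omega}$, which is the displayed description. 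The main obstacle is the middle step: passing from $J^\theta$-semistability, an a priori infinite condition over all normal relatively Kähler test configurations, to the finitely many numerical inequalities on $C_1,\dots,C_\ell$; this is exactly where the uniform candidate-destabiliser set of Theorem \ref{Thm main Jequation} and the identity $\Delta^{\mathrm{alg}}=\Delta_{\mathrm{NM}}$ of \cite{SD4} are both essential. A secondary point needing care is that a pair at which some $\int_{C_i}\tau_{\theta,\omega}$ vanishes while another is negative still lies on $\{\prod_i\int_{C_i}\tau_{\theta,\omega}=0\}$ but fails to be $J^\theta$-semistable, which is why the product description is read relative to the (closed) semistable locus.
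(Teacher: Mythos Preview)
Your argument is correct and follows essentially the same route as the paper: reduce to the finite test set $\{C_1,\dots,C_\ell\}$ via Theorem~\ref{Thm main Jequation}/Lemma~\ref{Lem cardinality bound cvx hull}, identify uniform $J^\theta$-stability with strict positivity of all $\int_{C_i}\tau_{\theta,\omega}$ via \cite{GaoChen}, and identify $J^\theta$-semistability with the non-strict inequalities. The only cosmetic difference is how you justify the semistability equivalence: the paper argues that both the semistable locus and the locus $\{\int_C\tau\ge 0\ \forall C\}$ are the closure of the uniformly stable locus (one direction by Lemma~\ref{Lemma polynomial expansion}, the other by openness), whereas you invoke $\Delta^{\mathrm{alg}}=\Delta_{\mathrm{NM}}$ from \cite{SD4} directly; both are valid and rest on the same ingredients. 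Your closing caveat---that a point with one $\int_{C_i}\tau=0$ and another $\int_{C_j}\tau<0$ lies on $\{\prod_i\int_{C_i}\tau=0\}$ yet fails semistability, so the product description must be read relative to the semistable locus (equivalently, augmented by the inequalities $\int_{C_i}\tau\ge0$)---is a point the paper's own proof passes over, and your formulation is the more precise one; the set is then semi-algebraic in the usual sense.
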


\begin{proof}
First note that uniform $J^{\theta}$-stability is equivalent to $\int_C \tau_{\theta,\omega} > 0$, and $J^{\theta}$-semistability is equivalent to $\int_C \tau_{\theta,\omega} \geq 0$, for all curves $C \subseteq X$. Indeed, the former equivalence is due to \cite{GaoChen}, and to prove the second we first note that $J^{\theta}$-semistability implies that $\int_C \tau_{\theta,\omega} \geq 0$. If $X$ is projective this is a direct consequence of Lemma \ref{Lemma polynomial expansion}. In general, it follows from the standard formula $$\mathrm{E}^{\mathrm{NA}}_{\theta + \epsilon\omega,\omega} = \mathrm{E}^{\mathrm{NA}}_{\theta,\omega} + \epsilon||.||,$$ that $J^{\theta}$-semistability implies uniform $J^{\theta + \epsilon\omega}$-stability for any $\epsilon > 0$. This in turn implies that $\tau_{\theta + \epsilon\omega,\omega} = \tau_{\theta,\omega} + \epsilon \omega$ is K\"ahler for all $\epsilon >0$, by \cite{GaoChen}. Hence $\tau_{\theta,\omega}$ must be nef.

Moreover, it is immediate from the main Theorem \ref{Thm main Jequation} to see that both sets 
\begin{equation} \label{Equation 1}
\mathrm{Jss}_{\vert K} = \left\{([\theta],[\omega]) \in K : (X,[\omega])\;\mathrm{is}\;J^{\theta}-\mathrm{semistable} \right\}
\end{equation}
and
\begin{equation} \label{Equation 2}
\left\{([\theta],[\omega]) \in K : \int_C \tau_{\theta,\omega} \geq 0 \right\}.
\end{equation}
are realized as the \emph{smallest} closed sets containing 
$$
\mathrm{UJs}_{\vert K} = \left\{([\theta],[\omega]) \in K : (X,[\omega])\;\mathrm{is}\; \mathrm{uniformly}\;J^{\theta}-\mathrm{stable} \right\} = \left\{([\theta],[\omega]) \in K : \int_C \tau_{\theta,\omega} > 0 \right\}.
$$
(Where for the last equality we have used \cite{DatarPingali, Song}.
Combined with Theorem \ref{Thm main Jequation} this  entails that there exists a collection of curves $\left\{C_i\right\}$ of cardinality $\ell \leq k\rho(X)$ such that 
$$
\left(\mathrm{Jss} \setminus \mathrm{UJs}\right)_{\vert K} = \prod_{i = 1}^{\ell} \int_{C_i} \tau_{\theta,\omega} = 0,
$$
defining a real algebraic set in $K$. This is what we wanted to prove. 
\end{proof}

\begin{remark}
The conditions $\int_{C_i} \tau_{\theta,\omega} = 0$ define codimension $1$ subsets by a similar argument to Proposition \ref{Cor Z crit codim 1}. 
\end{remark}

\noindent To summarize the relationship between the different positivity/stability notions involved,  
optimally destabilizing curves exist for $J^{\theta}$-positivity (i.e. the condition that $\int_E \tau_{\theta,\omega} > 0$ for all curves $E \subset X$, equivalent to uniform J-stability, by \cite{GaoChen,DatarPingali,Song}), but on the algebro-geometric side optimally destabilizing (normal and relatively K\"ahler) test configurations do \emph{not} exist for $J^{\theta}$-stability. Moreover, $J^{\theta}$-stability turns out to be \emph{equivalent} to $J^{\theta}$-semistability over rational classes in $H^{1,1}(X,\mathbb{R}$, and the set $\mathrm{Jss}$ represents the closure of the (open, see \cite{SD6}) set $\mathrm{UJs}$ for uniform J-stability. The boundary $\partial \mathrm{UJs}$ is real algebraic and locally cut out by finitely many equations. A very interesting future direction is to similarly clarify the analogous relations for K-semistability/K-stability/uniform K-stability (see e.g. \cite{Hattori} for first results in this direction). 

\section{Flow aspects} \label{Section flow}

\noindent In this section we consider certain well-known theorems regarding geometric flows associated with the J-equation and the deformed Hermitian Yang-Mills equation, and remark on a possible relationship between the singularities that develop in each of these cases. 

\subsection{Remarks on a J-flow result of Song-Weinkove}
In \cite{SongWeinkove} the authors study a parabolic flow associated to the J-equation. Given a pair of Kähler classes $\beta,\alpha$ and Kähler forms $\theta\in\beta$ and $\omega_0\in \alpha$ on a compact Kähler manifold $X$ of complex dimension $n$, the authors study the \emph{J-flow} of the pair $\theta, \omega_0$, namely 
\begin{align}
    \frac{\partial \varphi(\cdot,t)}{\partial t} &= C_{\beta,\alpha} - \frac{n\theta\wedge (\omega_0 + \i\ddbar{\varphi(\cdot,t))^{n-1}}}{(\omega+\i\ddbar{\varphi(\cdot,t)})^n}\\
    \varphi(x,0) &= 0 \quad \textrm{for all } x \in X,
\end{align}
for a smooth function $\varphi:X\times [0,\infty) \to \RR.$ Here, 
$$
C_{\beta,\alpha} := \frac{n\int_X \beta\cdot\alpha^{n-1}}{\int_X \alpha^n}.
$$
The authors prove that this is a well-defined flow in 
$$
\mathcal H(\omega_0) = \left\{\varphi \in C^\infty(X,\RR)\; | \; \omega_0 + \i \ddbar{\varphi} > 0 \right\}
$$
which converges to a solution $\varphi_\infty$ of the J-equation 
\begin{equation}\label{eqn J equation}
n\theta\wedge(\omega_0 + \i \ddbar{\varphi_\infty})^{n-1}=C_{\beta,\alpha} \theta^n
\end{equation}
if a solution exists. If no solutions exist, then the authors also prove that the flow $\varphi(\cdot,t)$ becomes singular along certain divisors. More precisely, in the special case whereby $X$ is a surface, the authors prove the following result \cite[Theorem 1.4]{SongWeinkove}. 
\begin{theorem}[Song-Weinkove, \cite{SongWeinkove} Theorem 1.4] \label{Thm SongWeinkove result J flow} Let $X$ be a compact Kähler surface and let $\alpha, \beta$ be Kähler classes on $X$. Suppose the J-equation \eqref{eqn J equation}
does not admit a solution in $\alpha$. Then, there exists an effective $\RR$-divisor 
$$
D= \sum_{i=1}^\ell a_i E_i
$$
on $X$ such that the class $\tau_\mathrm J (\beta,\alpha)-D$ is Kähler, where 
$$
\tau_\mathrm J (\beta,\alpha) := C_{\beta,\alpha}\alpha - \beta.
$$ 
The irreducible components $E_i$ of $D$ are all curves of negative self-intersection.
Moreover, the  J-flow $\varphi(\cdot,t)$ remains bounded (uniformly in $t$) in the set $X\setminus S(D)$ where 
$$
S(D):= \bigcup_{i=1}^\ell E_i.
$$ 
Let $\tilde S$ denote the set given by the intersection of all the sets $S(D)$ as $D$ varies over the linear system $|D|$.  Then, there exists a sequence $(x_j,t_j)$ of points and times in $X\times [0,\infty)$ such that, as $j \to\infty$, $t_j\to\infty$ and the points $x_j$ get arbitrarily close to $\tilde S$, and 
$$
(|\varphi| + |\triangle_\theta \varphi|)(x_j,t_j) \to \infty.
$$
\end{theorem}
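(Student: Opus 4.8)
The plan is to separate the statement into its cohomological skeleton, which I would handle with the Zariski decomposition exactly as in the proof of Proposition~\ref{Prop meta}, and its analytic body, which follows the strategy of \cite{SongWeinkove}.

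First I would produce the divisor $D$. By Chen's theorem (\cite[Theorem 2]{Chen2000}), non-solvability of the J-equation \eqref{eqn J equation} in $\alpha$ is equivalent to the class $\tau_{\mathrm J}(\beta,\alpha)=C_{\beta,\alpha}\alpha-\beta$ not being Kähler. A short intersection computation gives $\tau_{\mathrm J}(\beta,\alpha)^2=\beta^2>0$ and $\tau_{\mathrm J}(\beta,\alpha)\cdot\alpha=\alpha\cdot\beta>0$, so $\tau_{\mathrm J}(\beta,\alpha)\in\mathcal P^+_X$, hence $\tau_{\mathrm J}(\beta,\alpha)\in\mathcal B_X$ by Lemma~\ref{Lemma positive inside big}, while it is not Kähler. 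Running the argument in the proof of Proposition~\ref{Prop meta}: for $\varepsilon>0$ small the class $\tau_{\mathrm J}(\beta,\alpha)-\varepsilon\alpha$ is big but not nef, so by Theorem~\ref{Thm Zariski decomposition} its Zariski decomposition reads $\tau_{\mathrm J}(\beta,\alpha)-\varepsilon\alpha=Z+N$ with $Z$ nef and $N=\sum_i a_i[E_i]$ for a nonzero effective $\RR$-divisor $D=\sum_i a_i E_i$ whose prime components form an exceptional family; in particular all the $E_i$ have negative self-intersection. Then $\tau_{\mathrm J}(\beta,\alpha)-D=Z+\varepsilon\alpha$ is a nef class plus a Kähler class, hence Kähler, which is the first assertion.

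For the behaviour of the flow on $X\setminus S(D)$ I would follow \cite{SongWeinkove}. A standard maximum-principle argument for the J-flow gives a uniform upper bound $\sup_{X\times[0,\infty)}\varphi\le C$. For a lower bound away from $S(D)$, fix smooth Hermitian metrics $h_i$ on $\calO_X(E_i)$ with defining sections $s_i$, set $\theta_D:=\sum_i a_i\Theta_{h_i}$ (a smooth form in $[D]$) and $\rho:=\sum_i a_i\log|s_i|_{h_i}^2$, so that $\rho\to-\infty$ along $S(D)$ and $\i\ddbar{\rho}=-\theta_D$ on $X\setminus S(D)$ by Poincaré--Lelong. Since $\tau_{\mathrm J}(\beta,\alpha)-D$ is Kähler there are a Kähler form $\omega_\sharp$ and a smooth function $u$ with $C_{\beta,\alpha}\omega_0-\theta=\omega_\sharp+\theta_D+\i\ddbar{u}$; using $\rho$ as a lower barrier together with the parabolic maximum principle for the J-flow operator one then controls $\varphi$ from below in terms of $\varepsilon\rho-C$, hence uniformly in $t$ on every compact subset of $X\setminus S(D)$. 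Combined with the upper bound this is a uniform $C^0$ bound on compacts of $X\setminus S(D)$, and second-order and Evans--Krylov type parabolic estimates (the J-flow being a concave fully nonlinear parabolic equation after the usual reformulation) upgrade it to uniform $C^\infty_{\mathrm{loc}}$ bounds there, in particular a uniform bound on $|\triangle_\theta\varphi|$ on compacts of $X\setminus S(D)$.

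Finally, the singularity statement I would prove by contradiction. Suppose $|\varphi|+|\triangle_\theta\varphi|$ stayed bounded on $U\times[0,\infty)$ for some open neighbourhood $U$ of $\tilde S$. Since $\tilde S=\bigcap_{D'\in|D|}S(D')$, every point of the compact set $X\setminus U$ avoids $S(D')$ for some $D'\in|D|$; applying the previous paragraph to each such $D'$ and using compactness produces a uniform bound on $|\varphi|+|\triangle_\theta\varphi|$ over all of $X\times[0,\infty)$. Standard higher-order estimates then give uniform $C^\infty$ bounds globally, and the monotone decrease of the relevant energy functional along the J-flow forces $\varphi(\cdot,t_j)$ to converge in $C^\infty$, along a suitable sequence $t_j\to\infty$, to a smooth solution of \eqref{eqn J equation} in $\alpha$ — contradicting non-solvability. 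Hence a sequence $(x_j,t_j)$ with $t_j\to\infty$, $x_j\to\tilde S$ and $(|\varphi|+|\triangle_\theta\varphi|)(x_j,t_j)\to\infty$ must exist. The hard part will be the local $C^0$ and $C^2$ estimates on $X\setminus S(D)$ — constructing a lower barrier with logarithmic poles of controlled strength along $S(D)$ and running the maximum principle for a problem that degenerates towards $S(D)$; this is the genuinely analytic core and the original contribution of \cite{SongWeinkove}, whereas the cohomological reduction (now via Zariski rather than Siu decomposition) and the covering/contradiction argument are comparatively soft.
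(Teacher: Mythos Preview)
The theorem you are asked to prove is not proved in this paper at all: it is quoted as \cite[Theorem 1.4]{SongWeinkove} and only \emph{refined} afterwards in Proposition~\ref{prop Song-Weinkove improvement}. So there is no ``paper's own proof'' to compare against, and what you have written is in effect a (reasonable) sketch of the original Song--Weinkove argument together with this paper's cohomological upgrade.

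On the cohomological part, your approach is exactly the content of Proposition~\ref{prop Song-Weinkove improvement}: you take $D=N(\tau_{\mathrm J}(\beta,\alpha)-\varepsilon\alpha)$ via Zariski decomposition, whence $\tau_{\mathrm J}(\beta,\alpha)-D=Z+\varepsilon\alpha$ is Kähler. This is \emph{not} how Song--Weinkove originally produced $D$; they used Siu decomposition of a suitable positive current together with Demailly regularisation (see Remark~\ref{Rmk Discussion Meta prop}(3)). The Zariski route is the improvement this paper supplies, and it buys you the sharper conclusions of Proposition~\ref{prop Song-Weinkove improvement}: $\ell\le\rho(X)$, and $|D|=\{D\}$ so that $\tilde S=S(D)$. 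Your sketch implicitly proves the stronger Proposition rather than the Theorem as stated.

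On the analytic part (the $C^0$ lower bound via logarithmic barriers, the parabolic $C^2$ and Evans--Krylov estimates on compacta of $X\setminus S(D)$, and the covering/contradiction argument producing the blow-up sequence), your outline is faithful to \cite{SongWeinkove} and there is no visible gap at the level of strategy; as you correctly flag, the genuine work sits in making the barrier argument and the degenerate local estimates rigorous, which is the substance of the cited paper and not reproduced here.
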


In fact, using the framework of the Zariski decomposition, we can make the above theorem a bit more precise as follows.

\begin{proposition}\label{prop Song-Weinkove improvement}
    In the above Theorem \ref{Thm SongWeinkove result J flow}, we can always take $D$ to be 
    $$
    D = N(\tau_\mathrm J (\beta,\alpha) - \varepsilon\beta),
    $$
    the negative part of the Zariski decomposition of $\tau_\mathrm J(\beta,\alpha) - \varepsilon\beta$ for any $\varepsilon > 0$ small enough. In particular, we may always take $\ell \leq \rho(X)$ and $\tilde S = S(D)$. If $X$ is moreover projective, then $\ell \leq \rho(X) - 1$. 
\end{proposition}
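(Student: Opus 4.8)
The plan is to identify the divisor $D$ in Theorem \ref{Thm SongWeinkove result J flow} with the negative part of the Zariski decomposition of a small perturbation of $\tau_{\mathrm J}(\beta,\alpha)$. First I would record the cohomological facts. On a surface ($n=2$) one has $\tau_{\mathrm J}(\beta,\alpha) = C_{\beta,\alpha}\alpha - \beta = \tau_{\theta,\omega}$ in the notation of Section \ref{Section J}, and a direct computation gives $\tau_{\mathrm J}(\beta,\alpha)^2 = \beta^2 > 0$ and $\tau_{\mathrm J}(\beta,\alpha)\cdot\alpha = \beta\cdot\alpha > 0$, so $\tau_{\mathrm J}(\beta,\alpha) \in \mathcal P_X^+ \subseteq \mathcal B_X$ by Lemma \ref{Lemma positive inside big}. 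Since the J-equation \eqref{eqn J equation} admits no solution, $\tau_{\mathrm J}(\beta,\alpha)$ is not a Kähler class (by \cite[Theorem 2]{Chen2000}, as in the proof of Theorem \ref{Thm main Jequation}). As $\mathcal B_X$ is open and $\beta$ is Kähler, $\tau_{\mathrm J}(\beta,\alpha) - \varepsilon\beta \in \mathcal B_X$ for all small $\varepsilon > 0$; moreover it is never nef, for otherwise $\tau_{\mathrm J}(\beta,\alpha) = (\tau_{\mathrm J}(\beta,\alpha) - \varepsilon\beta) + \varepsilon\beta$ would be a sum of a nef class and a Kähler class, hence Kähler. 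Fixing such an $\varepsilon$ and applying Theorem \ref{Thm Zariski decomposition}, I set $D := N(\tau_{\mathrm J}(\beta,\alpha) - \varepsilon\beta) = \sum_{i=1}^{\ell} a_i E_i$, a nonzero effective $\RR$-divisor (nonzero because a big class with vanishing negative part is nef on a surface) whose components $E_i$ have negative-definite intersection matrix --- in particular negative self-intersection --- and linearly independent classes in $NS(X)_\RR$. This already forces $\ell \le \rho(X)$, and $\ell \le \rho(X) - 1$ when $X$ is projective, exactly as in the proof of Proposition \ref{Prop meta} (a negative-definite subspace of $NS(X)_\RR$ has dimension at most $\rho(X) - 1$ by the Hodge index theorem, once $NS(X)_\RR$ contains an ample class).

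Next I would extract the divisor statement. Writing $\tau_{\mathrm J}(\beta,\alpha) - \varepsilon\beta = Z(\tau_{\mathrm J}(\beta,\alpha) - \varepsilon\beta) + D$ gives the identity
$$
\tau_{\mathrm J}(\beta,\alpha) - D = Z(\tau_{\mathrm J}(\beta,\alpha) - \varepsilon\beta) + \varepsilon\beta,
$$
and since the Zariski positive part $Z(\tau_{\mathrm J}(\beta,\alpha) - \varepsilon\beta)$ is modified nef, hence nef on a surface, the right-hand side is a nef class plus a Kähler class, so $\tau_{\mathrm J}(\beta,\alpha) - D$ is Kähler. For the equality $\tilde S = S(D)$ I would use the uniqueness clause of Theorem \ref{Thm Zariski decomposition}(2): any effective $\RR$-divisor $D'$ linearly equivalent to $D$ is cohomologous to $D$, so its current of integration is the unique closed positive current in the class $N(\tau_{\mathrm J}(\beta,\alpha) - \varepsilon\beta)$, namely the current of integration along $D$; hence $D' = D$, the linear system $|D|$ contains only $D$, and the intersection defining $\tilde S$ collapses to $S(D)$.

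Finally, the dynamical conclusions of Theorem \ref{Thm SongWeinkove result J flow} --- uniform boundedness of $\varphi(\cdot,t)$ on $X \setminus S(D)$ and singularity formation near $\tilde S$ --- transfer to this choice of $D$, since the argument of Song-Weinkove uses only that $D$ is an effective divisor with $\tau_{\mathrm J}(\beta,\alpha) - D$ Kähler, equivalently (by Remark \ref{Rmk Discussion Meta prop}(2)) that $D$ is the divisorial part of a current of minimal singularities in $\tau_{\mathrm J}(\beta,\alpha) - \varepsilon\beta$, which is exactly what the Zariski decomposition produces. I expect the main obstacle to be precisely this last point: the cohomological input is soft --- it is little more than ``nef $+$ Kähler $=$ Kähler'' together with Boucksom's theorem --- and the real care is needed in verifying that the quantitative flow estimates of \cite{SongWeinkove} are insensitive to replacing their Siu-decomposition divisor with the Zariski negative part, so that the sharp divisor $D$ may be substituted directly into their proof.
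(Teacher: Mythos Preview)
Your proposal is correct and follows essentially the same route as the paper's proof: both identify $D$ with $N(\tau_{\mathrm J}(\beta,\alpha)-\varepsilon\beta)$, verify $\tau_{\mathrm J}(\beta,\alpha)-D = Z(\tau_{\mathrm J}(\beta,\alpha)-\varepsilon\beta)+\varepsilon\beta$ is K\"ahler, deduce the bounds on $\ell$ from negative-definiteness, and obtain $\tilde S = S(D)$ from the uniqueness of the positive current in the class $N(\cdot)$. Your closing concern is handled in the paper exactly as you anticipate: it simply records that the proof of \cite[Theorem 1.4]{SongWeinkove} applies to \emph{any} effective $\RR$-divisor $D$ with $\tau_{\mathrm J}(\beta,\alpha)-D$ K\"ahler, so no further verification of the flow estimates is needed.
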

\begin{proof}
    The proof of Theorem \ref{Thm SongWeinkove result J flow} (Theorem 1.4 in \cite{SongWeinkove}) shows that in fact, the conclusion of the theorem holds for \emph{any} effective $\RR$-divisor such that the class $\tau_\mathrm J(\beta,\alpha) - D$ is a Kähler class. However, recall that the class $\tau_\textrm{J}(\beta,\alpha) \in \mathcal P^+_X$ and so for every $\varepsilon>0$ small enough, $\tau_\mathrm J (\beta,\alpha) - \varepsilon\beta$ is also in $\mathcal P^+_X$, and clearly 
$$
\tau_\textrm J (\beta,\alpha) - N(\tau_\textrm J (\beta,\alpha)-\varepsilon\beta) = Z(\tau_\textrm J (\beta,\alpha)-\varepsilon\beta) + \varepsilon\beta
$$
is a Kähler class for every $\varepsilon >0$ small enough. Writing 
$$
D = N(\tau_\textrm J (\beta,\alpha)-\varepsilon\beta) = \sum_{i=0}^\ell
a_i E_i
$$
for irreducible curves $E_i$ on $X$, we see that we can always take $\ell \leq \rho(X)$, because the intersection matrix of the negative part of the Zariski decomposition is always negative-definite. Moreover, according to \cite[Proposition 3.13]{Boucksomthesis}, the class represented by the negative part of the Zariski decomposition always admits precisely one positive current, so we see that $|D|$ admits the unique element $D$ in it. This means 
$$\tilde S = S(D) = \bigcup_{i=1}^\ell E_i.$$ If $X$ is projective, then $NS(X)_\RR$ contains a positive eigenvector of the intersection pairing, and we get $\ell \leq \rho(X) - 1$.
\end{proof}

\medskip

\subsection{Remarks on the line bundle mean curvature flow and a result of Takahashi} In the context of the dHYM equation, Jacob-Yau in \cite{JacobYau} have proposed the study of the so called \emph{line bundle mean curvature flow}. In the above notation, this flow can be written 
\begin{align}
    \frac{\partial \varphi(\cdot,t)}{\partial t} &= \arctan(\lambda_1(t))+\arctan(\lambda_2(t)) - \hat\Theta(\beta,\alpha)\\
    \varphi(x,0) &= 0 \quad \textrm{ for all } x\in X, 
\end{align}
for a smooth function $\varphi_0$. 
Here $\lambda_1(t), \lambda_2(t)$ are the eigenvalues of the endomorphism $A_t:TX^{1,0}\to TX^{1,0}$ determined by 
$$
\theta(A_tv,w) = (\omega_0 + \i\ddbar{\varphi(\cdot,t)})(v,w),
$$
and 
$$
\cot(\hat\Theta(\beta,\alpha))= \frac{\int_X (\beta^2-\alpha^2)}{\int_X 2\alpha\cdot\beta}. 
$$
Under suitable hypotheses, in \cite{TakahashiLMBCF}, Takahashi proves that the solution $\varphi(\cdot,t)$ of the flow remains bounded away from finitely many curves of negative self-intersection. More precisely, he proves the following theorem. (See also \cite[Theorem 1.4]{FuYauZhang} for a very similar result on the so-called \emph{dHYM flow} studied by Fu-Yau-Zhang.) 
\begin{theorem}[{Takahashi, \cite[Theorem 1.1]{TakahashiLMBCF}}] \label{Thm Takahashi LBMCF}
Suppose the initial data $\omega_0, \theta$, and $\hat\Theta = \hat\Theta(\beta,\alpha)$ satisfy
$$
\tilde\tau_\mathrm{dHYM}(\theta,\omega_0):= \omega_0 + \cot(\hat\Theta)\theta
$$
is a semi-positive form, and we have
\begin{equation}\label{eqn pointwise phase hypothesis}
\arctan(\lambda_1(0)) + \arctan(\lambda_2(0))>\frac{\pi}{2}
\end{equation}
and $\hat\Theta > \frac{\pi}{2}$, then there exist finitely many curves $\tilde E_1,\cdots,\tilde E_k$ of negative self-intersection such that the flow $\varphi(\cdot,t)$ converges to a function $\varphi_\infty$ which is smooth on the complement of the set
$$
S = \bigcup_{i=1}^k \tilde E_i.
$$
Moreover, the current defined by 
$$
F_\infty = \omega_0 + \i \ddbar{\varphi_\infty}
$$
satisfies the deformed Hermitian Yang-Mills equation in the sense of currents on all of $X$.
\end{theorem}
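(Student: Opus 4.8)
The plan is to proceed in close analogy with Proposition~\ref{prop Song-Weinkove improvement}, replacing the J-equation of Song--Weinkove by the dHYM equation and feeding Takahashi's a priori estimates through the reduction to a complex Monge--Amp\`ere equation carried out in the proof of Theorem~\ref{Thm Main dHYM Proof}. Recall from there that under the supercritical phase hypothesis $\hat\Theta(\beta,\alpha)>\frac{\pi}{2}$ (equivalently $\int_X(\alpha^2-\beta^2)>0$), completing the square turns the dHYM equation for $\omega\in\alpha$ with auxiliary K\"ahler form $\theta\in\beta$ into the complex Monge--Amp\`ere equation
$$
\left(\omega+\cot\big(\hat\Theta(\beta,\alpha)\big)\theta\right)^2=\Big(1+\cot^2\big(\hat\Theta(\beta,\alpha)\big)\Big)\theta^2
$$
for the class $\tau_{\mathrm{dHYM}}(\beta,\alpha):=\alpha+\cot(\hat\Theta(\beta,\alpha))\beta$, which was shown there to lie in $\mathcal P^+_X$. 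Correspondingly, the line bundle mean curvature flow is a parabolic flow whose stationary points are solutions of this Monge--Amp\`ere equation, so its long-time behaviour is governed by whether the class $\tau_{\mathrm{dHYM}}(\beta,\alpha)$ is K\"ahler: if it is, the flow converges smoothly on all of $X$ and $S=\emptyset$, and the substance of the theorem lies in the complementary case, which the remaining steps address.

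In that case I would invoke the Zariski decomposition exactly as in Proposition~\ref{prop Song-Weinkove improvement}. Since $\tau_{\mathrm{dHYM}}(\beta,\alpha)\in\mathcal P^+_X\subseteq\mathcal B_X$ (Lemma~\ref{Lemma positive inside big}) is big but not K\"ahler, the class $\tau_{\mathrm{dHYM}}(\beta,\alpha)-\varepsilon\beta$ is big but not nef for all sufficiently small $\varepsilon>0$, and Theorem~\ref{Thm Zariski decomposition} yields
$$
\tau_{\mathrm{dHYM}}(\beta,\alpha)-N\big(\tau_{\mathrm{dHYM}}(\beta,\alpha)-\varepsilon\beta\big)=Z\big(\tau_{\mathrm{dHYM}}(\beta,\alpha)-\varepsilon\beta\big)+\varepsilon\beta,
$$
which is a K\"ahler class. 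Setting $D:=N(\tau_{\mathrm{dHYM}}(\beta,\alpha)-\varepsilon\beta)=\sum_{i=1}^{k}a_i\tilde E_i$, the prime components $\tilde E_i$ are curves of negative self-intersection whose classes are linearly independent with negative-definite intersection matrix, so $k\le\rho(X)$ (and $k\le\rho(X)-1$ if $X$ is projective); moreover, by \cite[Proposition~3.13]{Boucksomthesis} the class $[D]$ carries a unique closed positive current, hence $|D|=\{D\}$ and the a priori indeterminate set $\tilde S$ of the theorem collapses to $S=S(D)=\bigcup_{i=1}^{k}\tilde E_i$.

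For the convergence and regularity part I would use Takahashi's a priori estimates as a black box, observing --- as in the proof of Proposition~\ref{prop Song-Weinkove improvement} for Song--Weinkove --- that his argument only requires the existence of \emph{some} effective $\RR$-divisor $D$ with $\tau_{\mathrm{dHYM}}(\beta,\alpha)-D$ K\"ahler, together with the pointwise supercriticality \eqref{eqn pointwise phase hypothesis} and the semipositivity of $\omega_0+\cot(\hat\Theta(\beta,\alpha))\theta$, which keep the flow in the regime where the subsolution criterion and the Monge--Amp\`ere reduction apply. Away from $\mathrm{Supp}(D)$ the K\"ahlerness of $Z(\tau_{\mathrm{dHYM}}(\beta,\alpha)-\varepsilon\beta)+\varepsilon\beta$ supplies the positivity needed to localise and obtain uniform-in-$t$ $C^0$ and higher-order estimates, so $\varphi(\cdot,t)$ converges locally smoothly on $X\setminus S$ to some $\varphi_\infty$. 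The limit $F_\infty=\omega_0+\i\ddbar{\varphi_\infty}$ is then a closed positive current, smooth on $X\setminus S$ and with locally bounded potential there; using the standard extension theorem for closed positive currents with locally bounded potentials it extends across the analytic set $S$, and the dHYM equation, valid classically on $X\setminus S$, holds in the sense of currents (via the Bedford--Taylor product) on all of $X$.

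The main obstacle --- which I would not attempt to reprove here --- is Takahashi's analytic core: the uniform-in-$t$ a priori estimates for the flow off the degeneracy locus, and the identification of that locus with an analytic subvariety. What the Zariski-decomposition viewpoint contributes beyond this is (i) the explicit and canonical choice $D=N(\tau_{\mathrm{dHYM}}(\beta,\alpha)-\varepsilon\beta)$, (ii) the sharp cardinality bound $k\le\rho(X)$ (resp.\ $\rho(X)-1$ in the projective case), and (iii) the equality $\tilde S=S(D)$. The one point genuinely needing verification is that Takahashi's proof is ``modular in $D$'' in the above sense and that its semipositivity hypothesis is compatible with working inside the K\"ahler class $Z(\tau_{\mathrm{dHYM}}(\beta,\alpha)-\varepsilon\beta)+\varepsilon\beta$; this is closely analogous to the step in the proof of Theorem~\ref{Thm Main dHYM Proof} where positivity of the completed-square form is deduced from $\tau_{\mathrm{dHYM}}(\beta,\alpha)\in\mathcal P^+_X$.
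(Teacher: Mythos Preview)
The theorem you are asked to prove is a \emph{cited} result: it is attributed verbatim to Takahashi \cite[Theorem~1.1]{TakahashiLMBCF} and the paper supplies no proof of it whatsoever. It is quoted as background for the paper's own contribution, which is the subsequent Proposition~\ref{Prop Takahashi improvement}. So there is no ``paper's own proof'' to compare against, and the correct response to the exercise is simply to recognise that the analytic content (long-time existence, a priori estimates away from a divisorial set, convergence to a current solution) is external input from \cite{TakahashiLMBCF}.

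Your proposal does not prove the stated theorem; rather, it sketches Proposition~\ref{Prop Takahashi improvement} while invoking Takahashi's estimates as a black box. In particular you import conclusions---the bound $k\le\rho(X)$, the identification $\tilde S=S(D)$, and the canonical choice $D=N(\tau_{\mathrm{dHYM}}(\beta,\alpha)-\varepsilon\beta)$---that are \emph{not} part of the statement of Theorem~\ref{Thm Takahashi LBMCF} at all; these are precisely the refinements the paper establishes afterwards. Conversely, the actual substance of Theorem~\ref{Thm Takahashi LBMCF} (the parabolic estimates and the convergence of the flow) you explicitly decline to prove. So the proposal is a category error: it targets the wrong statement, and for the statement it does target it relies circularly on the very result being asserted.
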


\noindent Here, we make the simple observation that the curves supplied by the above Theorem \ref{Thm Takahashi LBMCF} of Takahashi in fact comprise a subset of those supplied by Theorem \ref{Thm SongWeinkove result J flow} due to Song-Weinkove. More precisely, we have the following.

\begin{proposition} \label{Prop Takahashi improvement}
    In the setup of the above Theorem \ref{Thm Takahashi LBMCF}, we can take the set of curves $\left\{\tilde E_1, \cdots, \tilde E_k\right\}$ to be a subset of the set of curves $\left\{E_1,\cdots, E_\ell\right\}$ given by Theorem \ref{Thm SongWeinkove result J flow}. In particular, $k \leq \ell \leq \rho(X)$.
\end{proposition}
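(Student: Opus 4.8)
The plan is to relate the two families of curves through the cohomological identity between the `destabilizing classes' of the J-- and deformed Hermitian Yang--Mills equations, and then to invoke the Zariski-theoretic description of the Song--Weinkove curves already obtained in Proposition~\ref{prop Song-Weinkove improvement}. First I would record the elementary identity, valid on any compact K\"ahler surface for a pair of K\"ahler classes $\beta,\alpha$:
\[
2\Big(\int_X \alpha\cdot\beta\Big)\,\tau_{\mathrm{dHYM}}(\beta,\alpha)\;=\;\Big(\int_X \alpha^2\Big)\,\tau_{\mathrm J}(\beta,\alpha)\;+\;\Big(\int_X \beta^2\Big)\,\beta,
\]
where $\tau_{\mathrm{dHYM}}(\beta,\alpha)=\alpha+\cot(\hat\Theta(\beta,\alpha))\beta$ and $\tau_{\mathrm J}(\beta,\alpha)=C_{\beta,\alpha}\alpha-\beta$. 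This is a one-line check using $\cot(\hat\Theta(\beta,\alpha))=\tfrac{\int_X(\beta^2-\alpha^2)}{2\int_X\alpha\cdot\beta}$ and $C_{\beta,\alpha}=\tfrac{2\int_X\alpha\cdot\beta}{\int_X\alpha^2}$. Since $\int_X\alpha^2,\int_X\beta^2,\int_X\alpha\cdot\beta>0$ and $\beta$ is K\"ahler, this exhibits $\tau_{\mathrm{dHYM}}(\beta,\alpha)$ as a strictly positive linear combination of $\tau_{\mathrm J}(\beta,\alpha)$ and a K\"ahler class.

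Next I would use the fact that the singular curves $\tilde E_1,\dots,\tilde E_k$ produced by Theorem~\ref{Thm Takahashi LBMCF} satisfy $\int_{\tilde E_i}\tau_{\mathrm{dHYM}}(\beta,\alpha)\le 0$. Indeed, the semipositivity hypothesis on $\tilde\tau_{\mathrm{dHYM}}(\theta,\omega_0)$ forces the class $\tau_{\mathrm{dHYM}}(\beta,\alpha)$ to be nef; if it were moreover K\"ahler then Theorem~\ref{Thm Main dHYM Proof} would supply a smooth solution of the dHYM equation, the flow would converge to it with $k=0$, and the assertion would be vacuous. So we may assume $\tau_{\mathrm{dHYM}}(\beta,\alpha)$ is nef but not K\"ahler, and the $\tilde E_i$ are among the curves on which it has non-positive (in fact vanishing) intersection; this is exactly what one reads off from the proof in \cite{TakahashiLMBCF}, where the $\tilde E_i$ arise from a Siu-type decomposition of the limiting current. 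Combining $\int_{\tilde E_i}\tau_{\mathrm{dHYM}}(\beta,\alpha)\le 0$ with the identity above and with $\int_{\tilde E_i}\beta>0$ gives
\[
\frac{\int_X\alpha^2}{2\int_X\alpha\cdot\beta}\int_{\tilde E_i}\tau_{\mathrm J}(\beta,\alpha)=\int_{\tilde E_i}\tau_{\mathrm{dHYM}}(\beta,\alpha)-\frac{\int_X\beta^2}{2\int_X\alpha\cdot\beta}\int_{\tilde E_i}\beta<0,
\]
so $\int_{\tilde E_i}\tau_{\mathrm J}(\beta,\alpha)<0$ for every $i$.

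Finally I would close the argument via Proposition~\ref{prop Song-Weinkove improvement}: the curves $E_1,\dots,E_\ell$ of Theorem~\ref{Thm SongWeinkove result J flow} can be taken to be the prime components of $N(\tau_{\mathrm J}(\beta,\alpha)-\varepsilon\beta)$ for $\varepsilon>0$ small. For such $\varepsilon$ we still have $\int_{\tilde E_i}(\tau_{\mathrm J}(\beta,\alpha)-\varepsilon\beta)<0$, and $\tau_{\mathrm J}(\beta,\alpha)-\varepsilon\beta\in\mathcal P^+_X\subseteq\mathcal B_X$ by Lemma~\ref{Lemma positive inside big}, so Proposition~\ref{Prop meta} forces each $\tilde E_i$ to occur among the prime components of $N(\tau_{\mathrm J}(\beta,\alpha)-\varepsilon\beta)$, i.e. $\tilde E_i\in\{E_1,\dots,E_\ell\}$. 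Hence $\{\tilde E_1,\dots,\tilde E_k\}\subseteq\{E_1,\dots,E_\ell\}$, whence $k\le\ell$, and $\ell\le\rho(X)$ by Proposition~\ref{Prop meta} together with Proposition~\ref{prop Song-Weinkove improvement}. The step requiring genuine care, and which I expect to be the main obstacle, is the precise identification of the $\tilde E_i$ as $\tau_{\mathrm{dHYM}}$-negative curves extracted from Takahashi's construction; once that input is granted, the rest is purely formal intersection-theoretic bookkeeping.
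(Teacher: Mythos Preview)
Your argument and the paper's share the same cohomological identity between $\tau_{\mathrm{dHYM}}$ and $\tau_{\mathrm J}$, but diverge in how that identity is used and in what is extracted from Takahashi's proof. The paper does not attempt to show that Takahashi's curves $\tilde E_i$ are $\tau_{\mathrm{dHYM}}$-nonpositive. Instead it reads off from \cite{TakahashiLMBCF} (as you implicitly do for Song--Weinkove) that the conclusion of Theorem~\ref{Thm Takahashi LBMCF} holds for the components of \emph{any} effective $\mathbb R$-divisor $\tilde D$ with $\tau_{\mathrm{dHYM}}(\beta,\alpha)-\tilde D$ K\"ahler. It then exercises this freedom by choosing $\tilde D=N(\tau_{\mathrm{dHYM}}(\beta,\alpha)-\varepsilon\beta)$, applies the identity to write $\tau_{\mathrm{dHYM}}-\varepsilon\beta$ as a positive combination of $\tau_{\mathrm J}-\varepsilon C_{\beta,\alpha}\beta$ and the K\"ahler class $\beta$, and invokes the convexity of $N$ (Theorem~\ref{Thm Zariski decomposition}(4)) together with $N(\beta)=0$ to conclude that the support of $N(\tau_{\mathrm{dHYM}}-\varepsilon\beta)$ is contained in that of $N(\tau_{\mathrm J}-\varepsilon C_{\beta,\alpha}\beta)$.

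The upshot is that the paper's route completely bypasses the step you flag as ``the main obstacle''. Your approach, applying the identity at the level of intersection numbers and then Proposition~\ref{Prop meta}, is correct once one knows that each $\tilde E_i$ satisfies $\int_{\tilde E_i}\tau_{\mathrm{dHYM}}\le 0$; but establishing this requires a closer reading of the Siu-decomposition argument in \cite{TakahashiLMBCF} than the paper's argument does, and in general the components of a divisor $\tilde D$ with $\tau_{\mathrm{dHYM}}-\tilde D$ K\"ahler need not all be $\tau_{\mathrm{dHYM}}$-nonpositive. The paper's convexity-of-$N$ argument is therefore both shorter and more robust: it needs only the same ``freedom of choice of $\tilde D$'' input you already grant for Song--Weinkove, and nothing sharper.
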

\begin{proof}
    Set 
    $$
    \tau_\mathrm{dHYM}(\beta,\alpha) := \alpha + \cot(\hat\Theta(\beta,\alpha))\beta.
    $$
    The proof of Theorem \ref{Thm Takahashi LBMCF} (see \cite[Theorem 1.1]{TakahashiLMBCF} or \cite[Theorem 1.4]{FuYauZhang}) shows that the curves $\tilde E_1,\cdots, \tilde E_k$ can be taken to be the irreducible components of any effective $\RR$-divisor $\tilde D = \sum_{i=1}^k \tilde a_i \tilde E_i$ such that
    $$
    \tau_\mathrm{dHYM}(\beta,\alpha) - \tilde D
    $$
    is a Kähler class. Just like in Proposition \ref{prop Song-Weinkove improvement}, we can take $\tilde D$ to be 
    $$
    \tilde D = N(\tau_\mathrm{dHYM}(\beta,\alpha) - \varepsilon\beta),
    $$
    the negative part of the Zariski decomposition of the class $\tau_\mathrm{dHYM}(\beta,\alpha) - \varepsilon\beta$ for any $\varepsilon >0 $ small enough. 
    But now observe that 
    $$
    \tau_\mathrm{dHYM}(\beta,\alpha) = \frac{\int_X\alpha^2}{2\int_X \beta\cdot\alpha} \tau_\mathrm J (\beta,\alpha) + \frac{\left(\int_X \alpha^2\right)\left(\int_X \beta^2\right)}{2 \int_X \beta\cdot\alpha}\beta 
    $$
    and so 
    \begin{align*}
    N(\tau_\mathrm{dHYM}(\beta,\alpha) - \varepsilon\beta) &= N\left(\frac{\int_X\alpha^2}{2\int_X \beta\cdot\alpha} \tau_\mathrm J (\beta,\alpha) - \varepsilon\beta + \frac{\left(\int_X \alpha^2\right)\left(\int_X \beta^2\right)}{2 \int_X \beta\cdot\alpha}\beta\right)\\
    &\leq \frac{\int_X\alpha^2}{2\int_X \beta\cdot\alpha} N\left(\tau_\mathrm J(\beta,\alpha)-\varepsilon C_{\beta,\alpha}\beta\right)+N\left(\frac{\left(\int_X \alpha^2\right)\left(\int_X \beta^2\right)}{2 \int_X \beta\cdot\alpha}\beta\right)\\
    &\leq \frac{\int_X\alpha^2}{2\int_X \beta\cdot\alpha} N\left(\tau_\mathrm J(\beta,\alpha)-\varepsilon C_{\beta,\alpha}\beta\right),
    \end{align*}
    by the convexity of the negative part of the Zariski decomposition (see \cite[Proposition 3.9 (i)]{Boucksomthesis}) and that fact that the negative part of a Kähler class is zero. The proof is now completed by taking $\varepsilon > 0$ small enough.
\end{proof}

\medskip

\subsection{Remarks on a dHYM flow result of Fu-Yau-Zhang}

An analogous result holds also in the case of the dHYM flow studied recently by Fu-Yau-Zhang \cite{FuYauZhang}, with the same proof:

\begin{proposition}
    In \cite[Theorem 1.4]{FuYauZhang}, the finitely many curves $E_i$ of negative self-intersection can be taken to be a subset of the set of curves $\left\{E_1,\cdots, E_\ell\right\}$ given by Theorem \ref{Thm SongWeinkove result J flow}. In particular the number of such curves is bounded above by $\rho(X)$.
\end{proposition}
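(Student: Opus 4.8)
The plan is to mirror, essentially word for word, the proof of Proposition \ref{Prop Takahashi improvement}. First I would note that the proof of \cite[Theorem 1.4]{FuYauZhang} uses the divisor along which the dHYM flow degenerates only through the property that subtracting it from $\tau_\mathrm{dHYM}(\beta,\alpha) := \alpha + \cot(\hat\Theta(\beta,\alpha))\beta$ produces a K\"ahler class; consequently the conclusion of that theorem holds with the singular set equal to $\bigcup_i \tilde E_i$ for the irreducible components $\tilde E_i$ of \emph{any} effective $\RR$-divisor $\tilde D = \sum_i \tilde a_i \tilde E_i$ such that $\tau_\mathrm{dHYM}(\beta,\alpha) - \tilde D$ is K\"ahler. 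This flexibility is the whole point of the argument.

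Second, under the supercritical hypothesis $\int_X(\alpha^2 - \beta^2) > 0$ one has $\cot(\hat\Theta(\beta,\alpha)) < 0$ and the class $\tau_\mathrm{dHYM}(\beta,\alpha)$ lies in the positive cone $\mathcal{P}_X^+$, exactly as computed in the proof of Theorem \ref{Thm Main dHYM Proof}. Hence $\tau_\mathrm{dHYM}(\beta,\alpha) - \varepsilon\beta \in \mathcal{P}_X^+ \subseteq \mathcal{B}_X$ for all sufficiently small $\varepsilon > 0$ by Lemma \ref{Lemma positive inside big}, and its Zariski decomposition gives
$$
\tau_\mathrm{dHYM}(\beta,\alpha) - N\!\left(\tau_\mathrm{dHYM}(\beta,\alpha) - \varepsilon\beta\right) = Z\!\left(\tau_\mathrm{dHYM}(\beta,\alpha) - \varepsilon\beta\right) + \varepsilon\beta,
$$
which is K\"ahler. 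So we may take $\tilde D := N(\tau_\mathrm{dHYM}(\beta,\alpha) - \varepsilon\beta)$, whose irreducible components are linearly independent in $\mathrm{NS}(X)_\RR$ by Theorem \ref{Thm Zariski decomposition}\,(2), so that already $k \le \rho(X)$ (and $k \le \rho(X)-1$ if $X$ is projective).

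Third, to see that these components lie among the curves $\{E_1,\dots,E_\ell\}$ supplied by Theorem \ref{Thm SongWeinkove result J flow}, I would use the linear identity
$$
\tau_\mathrm{dHYM}(\beta,\alpha) = \frac{\int_X\alpha^2}{2\int_X\beta\cdot\alpha}\,\tau_\mathrm{J}(\beta,\alpha) + \frac{\left(\int_X\alpha^2\right)\left(\int_X\beta^2\right)}{2\int_X\beta\cdot\alpha}\,\beta
$$
together with the convexity and homogeneity of the negative part $N$ of the Zariski decomposition (Theorem \ref{Thm Zariski decomposition}\,(4) and \cite[Proposition 3.9 (i)]{Boucksomthesis}) and the vanishing of $N$ on K\"ahler classes, to bound $N(\tau_\mathrm{dHYM}(\beta,\alpha) - \varepsilon\beta)$ above by a positive multiple of $N(\tau_\mathrm{J}(\beta,\alpha) - \varepsilon'\beta)$ for a suitably chosen $\varepsilon' > 0$. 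In particular the support of $N(\tau_\mathrm{dHYM}(\beta,\alpha) - \varepsilon\beta)$ is contained in the support of $N(\tau_\mathrm{J}(\beta,\alpha) - \varepsilon'\beta)$, which is precisely $\{E_1,\dots,E_\ell\}$ by the proof of Proposition \ref{prop Song-Weinkove improvement}. This yields $\{\tilde E_i\} \subseteq \{E_1,\dots,E_\ell\}$, hence $k \le \ell \le \rho(X)$.

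The only genuine work is bookkeeping: verifying the insensitivity of the \cite{FuYauZhang} argument to the choice of singular divisor (as asserted in the statement), and tracking the rescaling $\varepsilon \mapsto \varepsilon'$ so that the classes $\tau_\mathrm{dHYM}(\beta,\alpha) - \varepsilon\beta$ and $\tau_\mathrm{J}(\beta,\alpha) - \varepsilon'\beta$ both remain in $\mathcal{P}_X^+$. Both are handled exactly as in the proof of Proposition \ref{Prop Takahashi improvement}, so no new ingredient is required.
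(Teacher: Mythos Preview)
Your proposal is correct and follows exactly the approach the paper takes: the paper simply asserts that the result holds ``with the same proof'' as Proposition \ref{Prop Takahashi improvement}, and your write-up reproduces that argument step for step (choice of $\tilde D = N(\tau_\mathrm{dHYM}(\beta,\alpha)-\varepsilon\beta)$, the linear identity relating $\tau_\mathrm{dHYM}$ and $\tau_\mathrm{J}$, and the convexity of $N$).
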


\noindent Moreover, as previously remarked, if the initial data $(\theta,\omega_0)$ underlying the various flows discussed above is varied in such a way that $([\theta],[\omega_0])$ remains in a compact subset $K \subseteq \mathcal C_X \times \mathcal{C}_X$, and the hypotheses of the Theorems \ref{Thm SongWeinkove result J flow}, \ref{Thm Takahashi LBMCF} and \cite[Theorem 1.4]{FuYauZhang} are satisfied, the curves given by the theorems can be taken to lie in a \emph{uniform finite set} depending only on $K$.

The above observations, together with the well-known fact that the J-equation is the so-called `small volume limit' of the dHYM suggests the following interesting question: Suppose we fix Kähler classes $\beta$ and $\alpha$, and Kähler metrics $\theta\in \beta$ and $\omega_0 \in \alpha$, and study the flow, 
\begin{align}\label{eqn flow family}
    \frac{\partial \varphi(\cdot,t)}{\partial t} &= \frac{\arctan(r\lambda_1(t))+\arctan(r\lambda_2(t))}{r} - \hat\Theta(\beta,r\alpha)\\
    \varphi(x,0) &= 0 \quad \forall x\in X, 
\end{align}
for $r>0$. Then for $r>0$ very small, the dHYM equation 
$$
\operatorname{Im}\left(e^{-\i\hat\Theta(\beta,r\alpha)}(\theta+\i 
 r\omega)^2\right)=0
$$
is always solvable, because the class
$$
\tau_{\mathrm{dHYM}}(\beta,r\alpha) = r\alpha + \cot(\hat\Theta(\beta,r\alpha))\beta = r\alpha + \frac{\int_X(\beta^2-r^2\alpha^2)}{2r\int_X \beta\cdot\alpha} \beta
$$
is a Kähler class. As we increase $r$, at some finite radius $r_0 > 0$, the form $r\omega_0 + \cot(\hat\Theta(\beta,r\omega)\theta$ is semi-positive but not positive-definite, and we are in the setup of Theorem \ref{Thm Takahashi LBMCF} (provided the pointwise phase hypothesis \eqref{eqn pointwise phase hypothesis} is also satisfied). We then obtain finitely many curves $E_1,\cdots, E_{k_0}$ such that the flow converges to a function $\varphi_{r_0}$ which is smooth on the complement of their union. On the other hand, we also have 
$$
\frac{1}{r}\tau_\mathrm{dHYM}(\beta,r\alpha) = \frac{\int_X\alpha^2}{2\int_X \beta\cdot\alpha} \tau_\mathrm J (\beta,\alpha) + \frac{\left(\int_X \alpha^2\right)\left(\int_X \beta^2\right)}{2r^2 \int_X \beta\cdot\alpha}\beta 
$$
for every $r> 0$. So, on the opposite extreme, we can make $r$ large enough so that the class 
$$
\frac{1}{r}\left(\tau_\mathrm{dHYM}(\beta,r\alpha) - \frac{\left(\int_X \alpha^2\right)\left(\int_X \beta^2\right)}{r \int_X \beta\cdot\alpha}\beta\right) = \frac{\int_X\alpha^2}{2\int_X \beta\cdot\alpha} \tau_\mathrm J (\beta,\alpha) - \frac{\left(\int_X \alpha^2\right)\left(\int_X \beta^2\right)}{2r^2 \int_X \beta\cdot\alpha}\beta 
$$
is in $\mathcal P^+_X$. Then, the proof of Proposition \ref{prop Song-Weinkove improvement} shows us that the curves appearing in the negative part of this class can be taken to be the curves of Theorem \ref{Thm SongWeinkove result J flow}, along which the J-flow becomes singular. One can therefore ask how the singularities of the flow change as we vary $r\in [r_0,\infty)$. In particular, it might give more insight into the nature of the flows to consider the finitely many values $r_0 < r_1 < \cdots < r_s$ for which the flow \eqref{eqn flow family} develops \emph{new} singularities.

\medskip

\end{document}